\DeclarePairedDelimiter\abs{\lvert}{\rvert}%
\newcommand{\vect}[1]{\boldsymbol{#1}}
\theoremstyle{thmstyleone}%
\newtheorem{theorem}{Theorem}
\newtheorem{proposition}[theorem]{Proposition}%
\newtheorem{corollary}[theorem]{Corollary}
\newtheorem{lemma}[theorem]{Lemma}
\theoremstyle{thmstyletwo}%
\newtheorem{example}{Example}%
\newtheorem{remark}{Remark}%
\theoremstyle{thmstylethree}%
\begin{document}

\title[Multi-component Matching Queues]{Multi-component Matching Queues in Heavy Traffic}


\author*[1]{\fnm{Bowen} \sur{Xie}}\email{xie.b@wustl.edu,  bowenx@alumni.iastate.edu}




\affil*[1]{\orgdiv{Department of Statistics and Data Science}, \orgname{Washington University in St. Louis}, \orgaddress{\street{1 Brookings Drive}, \city{St. Louis}, \postcode{63130}, \state{Missouri}, \country{US}}}




\abstract{
We consider multi-component matching systems in heavy traffic consisting of $K\geq 2$ distinct perishable components which arrive randomly over time at high speed at the assemble-to-order station, and they wait in their respective queues according to their categories until matched or their ``patience" runs out. 
An instantaneous match occurs if all categories are available, and the matched components leave immediately thereafter. 
For a sequence of such systems parameterized by $n$, we establish an explicit definition for the matching completion process, and when all the arrival rates tend to infinity in concert as $n\to\infty$, we obtain a heavy traffic limit of the appropriately scaled queue lengths under mild assumptions, which is characterized by a coupled stochastic integral equation with a scalar-valued non-linear term. 
We demonstrate some crucial properties for certain coupled equations and exhibit numerical case studies. 
Moreover, we establish an asymptotic Little's law, which reveals the asymptotic relationship between the queue length and its virtual waiting time. 
Motivated by the cost structure of blood bank drives, we formulate an infinite-horizon discounted cost functional and show that the expected value of this cost functional for the $n$th system converges to that of the heavy traffic limiting process as $n$ tends to infinity.  
}

\keywords{Matching queues, assemble-to-order systems, heavy-traffic approximations, scalar-valued processes, waiting time processes, coupled stochastic integral equations}


\pacs[MSC Classification]{60K25(Primary), 90B22(Secondary), 68M20, 91B68, 60H20}

\maketitle




\section{Introduction}\label{sec1}

We consider a queueing model with a matching etiquette that matches multiple categories of components to produce a single product. 
The components of each distinct category arrive sequentially over time and wait in their respective queues. 
To make a final product, we need one component of each category, and once matched, the matched components leave the system immediately. 
The matching philosophy is according to the first-come-first-matched discipline (FCFM). These components could be ``impatient", and they may abandon the system without being matched when their patience runs out. 
Such an assumption is quite natural if the components are perishable or they are of no use after some time. 
Since matching is instantaneous, one can observe that there cannot be all positive numbers of components available throughout all the categories simultaneously at any given time; namely, at least one queue is empty at any given time instant. 
Such queueing models are known as multi-component matching queues with impatient components. Figure \ref{Schematic diagram of matching operation} exhibits a schematic diagram of such a matching operation with three categories of components, where the queue of category $\textcolor{green}{\blacktriangle}$ is empty at this time instant.

\begin{figure}[h!]
    \centering
    \begin{tikzpicture}[scale=0.80, circ/.style={shape=circle, fill, inner sep=2pt, draw, node contents=}]
        \node at (0,2) [rectangle,draw, thick] (product) {\footnotesize Assembly-like matching};
        
        \draw[->, thick] (-1.5, 1) -- (-0.5, 1.5);
        \draw[thick] (-2, 1) rectangle (-1, -1);
        \node at (-1.5, 0.7) [red, rectangle, draw, fill]{}; 
        \node at (-1.5, 0.3) [red, rectangle, draw, fill]{};
        \node at (-1.5, -0.1) [red, rectangle, draw, fill]{};
        \node at (-1.5, -0.5) [red, rectangle, draw, fill]{};
        
        \draw[->, thick] (0, 1) -- (0, 1.5);
        \draw[thick] (-0.5, 1) rectangle (0.5, -1);
        \node at (0, 0.7) [blue, circ, draw, fill, scale=0.7];
        \node at (0, 0.3) [blue, circ, draw, fill, scale=0.7];
        
        \draw[->, thick] (1.5, 1) -- (0.5, 1.5);
        \draw[thick] (1, 1) rectangle (2, -1);
        
        \draw [->, thick] (2.5, 2) -- (6, 0.5);
        \draw node (nodeA) at (2.5, 2) {};
        \draw node (nodeB) at (6, 0.5) {};
        \draw (nodeA) -- (nodeB) node [midway, above, sloped] (TextNode) {\footnotesize Matched triple leaves};
        \draw (nodeA) -- (nodeB) node [midway, below, sloped] (TextNode) {\footnotesize the system immediately};
        
        \draw [decorate, decoration = {calligraphic brace}, thick] (6.5,-1) --  (6.5,2);
        \node at (8,1.5) [rectangle,draw, thick] (product) {\footnotesize Product ($\textcolor{red}{\blacksquare} \textcolor{blue}{\bullet} \textcolor{green}{\blacktriangle}$)};
        \node at (8,0.5) [rectangle,draw, thick] (product) {\footnotesize Product ($\textcolor{red}{\blacksquare} \textcolor{blue}{\bullet} \textcolor{green}{\blacktriangle}$)};
        \draw [dotted, thick] (8, 0) -- (8, -1);

        \draw[white, ultra thick]  (-2.2, -1) -- (2.2, -1);
        
        \draw[->, thick] (-1.5, -1.6) -- (-1.5, -1.2);
        \draw[->, thick] (0, -1.6) -- (0, -1.2);
        \draw[->, thick] (1.5, -1.6) -- (1.5, -1.2);
        \draw node at (0, -2) {New arrivals};
    \end{tikzpicture}
    \caption{Schematic diagram of a matching operation for a product made of components from three distinct categories $\textcolor{red}{\blacksquare}$, $\textcolor{blue}{\bullet}$, $\textcolor{green}{\blacktriangle}$. }
    \label{Schematic diagram of matching operation}
\end{figure}
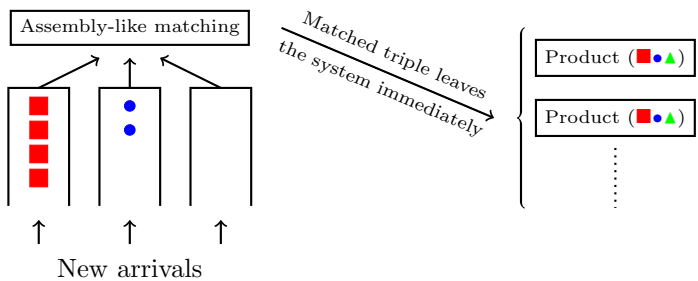

\subsection{Motivation}

Our multi-component matching queue model serves as a generalization of the double-ended queueing system in real life, which is driven by applications in taxi queueing systems \cite{kashyap1966double}, production-inventory systems (cf. 
\cite{kaspi1983inventory}, \cite{perry1999perishable}, \cite{xiegao2023long}, \cite{lee2021optimal}), blood bank drives \cite{bar2017blood}, organ transplantation problems (cf. \cite{boxma2011new}, \cite{khademi2021asymptotically}), and ride-sharing problems \cite{ozkan2020dynamic}, and high-tech manufacturing industries, and etc, where matching occurs between demand (or buyer) and supply (or seller). 
In these settings, one can define a single process as the difference of two queue lengths to represent the queue lengths for both queues by taking its positive part and negative part. This move is normally called a double-ended queue. 
However, it no longer works for many-component matchings. 

In this article, we consider matching queue systems containing $K\geq 2$ distinct categories of components. 
One may interpret this as an interface provided to customers to assemble a device with multiple parts. For example, a personal computer is built upon a motherboard, CPU, Memory, Storage, etc. 
Such a matching etiquette appears in many other pharmaceutical systems, physical systems, and healthcare systems, and it is normally called assemble-to-order (ATO) systems, in contrast to double-ended systems. 
For instance, a pharmaceutical company makes a pharmaceutical product that needs several distinct active pharmaceutical ingredients (APIs). 
Each substance arrives at an ATO production facility at high speed, and each has a short lifetime. 
They await in their respective queues according to their categories once they arrive. 
If any substance is not used before its expiration date, it must be disposed of and removed from the system. 
To produce a pharmaceutical product by establishing an instantaneous match, the assembly station requires one input API of each category. 
Notice that at least one API queue is empty at any given time; otherwise, there will be matches, which instantly yield empty queues due to the immediate matching philosophy. 
To maintain the stability of the system, we assume the substances arrive at the same average speed and are subject to short lifetimes. 
We would like to understand the behavior of this model when the average arrival speed of each API tends to infinity in concert.

To formulate such a multi-component matching queue model, 
let the state process vector represent the queue length of different components. 
It is useful to study such a model since the computation takes more effort for large-scale systems when $K$ is large. Additionally, direct analysis involves significant difficulties when dealing with various states of the queue lengths (See Section \ref{sec: stochastic model}). 
However, its limiting process reveals an appealing structure and provides some insights into its dynamics (see Section \ref{sec: numerical simulations}), and it is easy to simulate for large-scale systems since it can essentially be interpreted by a fixed point theorem under proper space. Moreover, it provides a good approximation when the average arrival speed of components is reasonably large. 
The approximation strategy for queueing models in heavy traffic using Brownian systems is an effective approach from the perspective of quantitative and qualitative insights (cf. \cite{reed2008approximating}, \cite{koccauga2010admission}, \cite{weerasinghe2014diffusion}).

\subsection{Contributions}

A challenge faced in this work lies in tackling the matching completions, which demonstrates the cumulative number of matches that occurred. 
The matching completion enables us to illustrate that the coupling behavior, which is also preserved in the heavy-traffic limiting process, significantly escalates the complexity of the matching queue models. 
This also distinguishes the multi-component matching queues from the double-ended queues (cf. \cite{liu2019diffusion}, \cite{liu2021admission}) since the latter cancels out the matching completions by a coupling behavior.

We summarize the novelty of our work as follows: (i) 
Our matching completion construction is explicit and novel in the sense that it concisely establishes a multi-component matching mechanism incorporating perishable/impatient components, whose twisting behaviors in heavy traffic are carefully analyzed theoretically and numerically. 
It further reveals some crucial properties: stickiness and reflectiveness. 
Its minimum-type scalar-valued structure is also preserved in the heavy-traffic limiting process, and we provide a semimartingale decomposition for a specific matching model, where we observe some underlying local time processes that prevent the queue lengths from dropping below the origin acting as regulators.
(ii) When the arrival rates of those distinct categories of components tend to infinity in concert, we obtain a heavy traffic approximation of the appropriately scaled state process vector under Markovian assumptions in Theorem \ref{weak convergence theorem}, where the heavy traffic limit is characterized by a non-trivial coupled stochastic integral equation. 
Such a coupled stochastic integral equation involves a scalar-valued non-linear term, which also renders entries of the limiting state process vector mutually coupled. 
Figure \ref{Graph of sample paths of matching queues} exhibits a sample instance of the coupling behavior of the heavy traffic limiting process vector $(X_1, X_2, X_3)$ in the case of $K=3$, where at any given time there exists at least one empty entry, and the sample paths also reveal a stickiness. 
\begin{figure}[h!tb]
    \centering
    \includegraphics[scale=0.45]{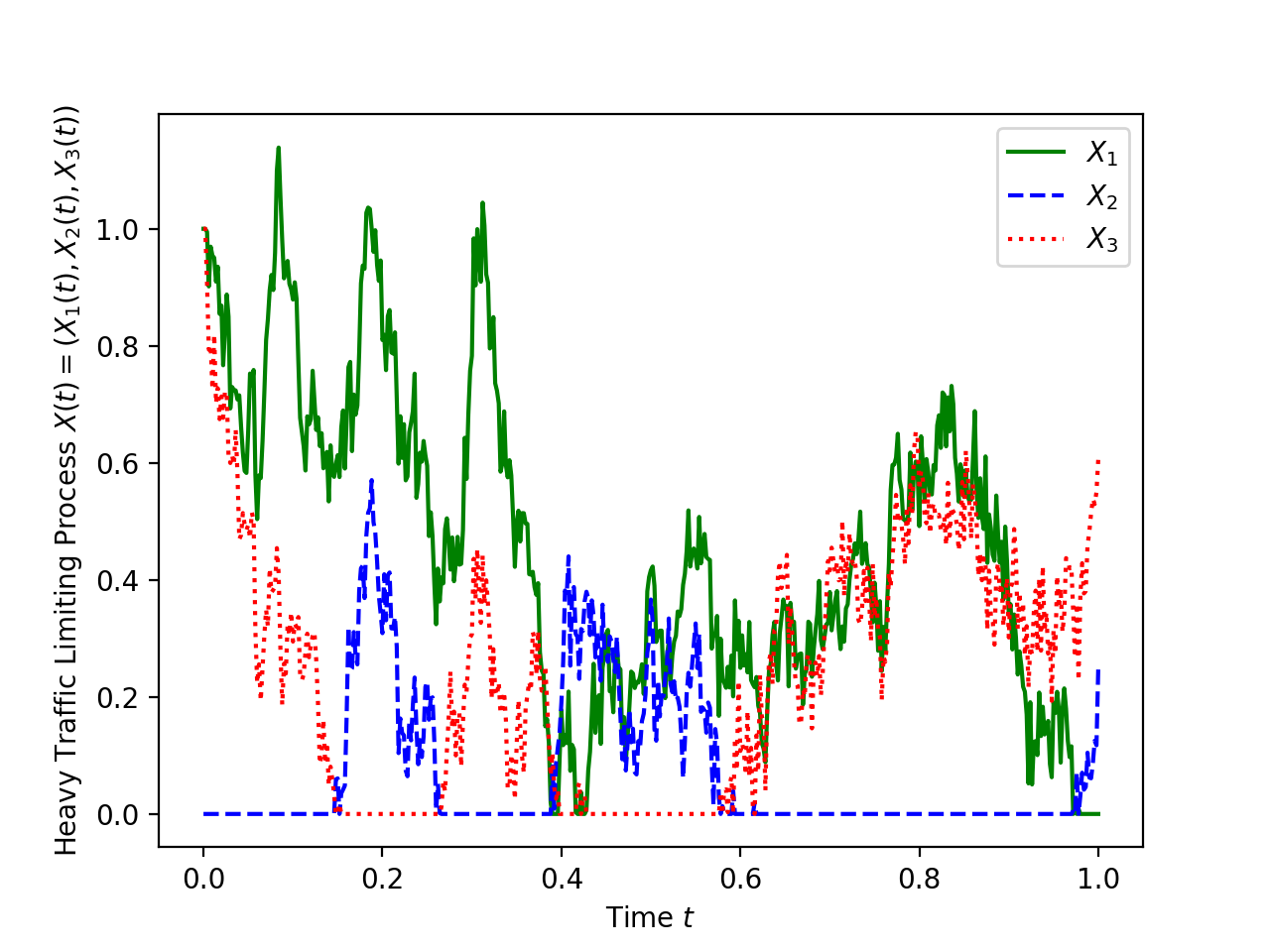}
    \caption{Sample paths of the heavy traffic limit \eqref{limiting process (model with abandonment)} (see Theorem \ref{weak convergence theorem}) in the case of $K=3$}
    \label{Graph of sample paths of matching queues}
\end{figure}
(iii) For each category, we establish an asymptotic relationship between the queue length and its corresponding virtual waiting time, which is often called the asymptotic Little's law (see Theorem \ref{little's law}). We also develop an interesting moment bound result for the virtual waiting time of a specific matching model without reneging under general assumptions (see Proposition \ref{weak convergence theorem (non-abandonment)}) using its exclusive order preserving property (see Proposition \ref{SB of hat V (non-abandonment)}). 
(iv) We also exhibit that the expected value of a properly defined cost functional for the $n$th system converges to that of the heavy traffic limiting process as $n$ tends to infinity (see Theorem \ref{Theorem convergence of cost functionals (polynomial)}), where we admit an unusual restriction of the discount rate related to the number of categories $K$ when considering the infinite-horizon discounted cost functional. 
(v) We provide some insights into coupled queueing systems by performing numerical simulations to illustrate the dynamics of matching completion and abandonment for large category number $K\to\infty$. We also exhibit the variation of some factors that may affect the stickiness in the perspective of the amount of time stays at zero. 

\subsection{Literature Review}

The stochastic matching queue analysis has gained a lot of attention in recent literature \cite{mairesse2020editorial}. 
The double-ended system has been well studied recently, and corresponding control problems have been concerned (cf.
\cite{conolly2002double},  
\cite{liu2015diffusion}, 
\cite{liu2019diffusion},   \cite{liu2021admission},  \cite{lee2021optimal}, etc.). 
In \cite{conolly2002double}, the effect of reneging is studied in the context of double-ended queues, where each demands service from the other to provide a theoretical but brief numerical assessment of operational consequences. 
In \cite{liu2015diffusion}, under a suitable asymptotic regime, they established fluid and diffusion approximations for the queue length process, which are characterized by an ordinary differential equation and time-inhomogeneous asymmetric Ornstein-Uhlenbeck process. They also exhibited the interchangeability of the heavy traffic and steady state limits. 
In \cite{liu2019diffusion}, they studied a double-ended system with two classes of impatient customers and established simple linear asymptotic relationships between the diffusion-scaled queue length process and the diffusion-scaled offered waiting time processes in heavy traffic. 
They also showed that the diffusion-scaled queue length process converges weakly to a diffusion process that admits a unique stationary distribution.
In \cite{lee2021optimal}, they studied a double-ended system having backorders and customer abandonment and determined the optimal (nonstationary) production rate over a finite time horizon to minimize the costs incurred by the system under some cost structure. 

In recent years, there has also been growing interest in two-side matching etiquette on networks/graphs such as bipartite graphs in, for example, \cite{castro2020matching}, \cite{weiss2020directed}, 
\cite{kohlenberg2023cost}
, etc. 
In particular, \cite{castro2020matching} studied a parallel matching queue with reneging under an FCFM manner according to a compatibility graph and provided product forms of steady-state distributions, and \cite{weiss2020directed} considered a two-sided parallel service model consisting of agents of several types and goods of several types, with a bipartite compatibility graph between agent and good types under an FCFM manner in the views of matching rates and delays, whose matching principles are not multi-component matching. 
\cite{kohlenberg2023cost} studied two-sided matching queues with abandonment, where they identified non-asymptotic and universal scaling laws for the matching loss (cost-of-impatience)
and established operating regimes that arise in asymptotic approximations. They characterized the trade-off between abandonment and capacity costs of the two-sided matching model compared with the single-sided queue. 
One may observe that our matching model can be simplified to a double-ended queue when $K=2$ and we also include asymptotic results for cost functionals; however, more cost-of-impatience characterizations under different operating regimes and its control problem of optimal capacity scaling may also be of general interest for multi-component matching, which will be considered in our follow-up paper \cite{xiewu2023control} and future studies. 

The generalized multi-class matching has also been studied in the literature with different formulations through various aspects (cf. \cite{harrison1973assembly}, \cite{plambeck2006optimal}, \cite{gurvich2015dynamic}, etc.). 
In \cite{harrison1973assembly}, Harrison studied a model with an assembly-like behavior to produce a product with several components and developed limit theorems for the appropriately normalized versions of the associated vector waiting time process in heavy traffic. 
The model contains $K\geq2$ independent renewal input processes. 
The server requires one input component of each category $j = 1, \cdots, K$, and once the server has all the required components, it takes a random processing time to finish the product.
\cite{plambeck2006optimal} introduced a model with order queues and component queues and studied a control problem of an ATO system with a high volume of prospective customers arriving per unit time, where multiple different components are instantaneously assembled into different finished products, and the control problem is developed so that they can maximize the expected infinite-horizon discounted profit by choosing product prices, component production capacities, and a dynamic policy for sequencing customer orders for assembly. 
In \cite{gurvich2015dynamic}, the authors studied a matching system with instantaneous processing, where a system manager could control which matchings to execute given multiple options and addressed the problem of minimizing finite-horizon cumulative holding costs. 
They established a multi-dimensional imbalance process to characterize the matching model and devised a myopic discrete-review matching control, which is shown to be asymptotically optimal in heavy traffic. 

Our model differs from \cite{plambeck2006optimal} since they introduced additional order queues to match with product queues and defined a shortage process to characterize the difference between the number of components required to assemble all outstanding orders and the number of components in inventory. 
In their asymptotic result, they established that the order queue lengths and inventory levels (component queue lengths) viewed under diffusion scaling are approximately deterministic functions of the shortage process in high volume. 
By considering a single infinity demand in \cite{plambeck2006optimal} or a single product-production system in \cite{gurvich2015dynamic}, our model might be thought of as a special case of theirs if we discard perishable assumption. 
However, to the best of our understanding and in the process-level view, we incorporate perishable components so that the limiting dynamics of our model depend on the current state in addition to the underlying Brownian motions, which makes those two results not applicable. 
We explicitly establish the matching completion process using inputs (arrivals and abandonments). 
Moreover, our formulation is not an immediate consequence of their asymptotic results in heavy traffic even without perishable assumption, and one needs more non-trivial work to rigorously exhibit the equivalence of our heavy-traffic limits and theirs (see Remark \ref{remark: compare with literature}). 

It is also worth mentioning that the multi-component matching models (without reneging) may be formulated as a particular matching system with hypergraphical matching structures that have a single hyperedge with $K\geq 2$ nodes, see for instance \cite{rahme2021stochastic}, where they studied the stability of various hypergraph geometries and demonstrated that the stochastic matching models on hypergraphs are, in general, difficult to stabilize. 
Other relevant graphical or hypergraphical matching model interpretations can be found in  \cite{buke2015stabilizing}, \cite{mairesse2016stability}, \cite{nazari2019reward}, 
\cite{jonckheere2023generalized}. 

More results of the matching systems can be found in \cite{green1985queueing}, \cite{adan2009exact}, \cite{adan2018reversibility}, \cite{fazel2018approximating}, etc. 
An application of a ride-sharing system is studied in \cite{ozkan2020dynamic}, and an organ transplant system is studied in \cite{khademi2021asymptotically}.

\subsection{Organization}

The rest of this article is organized as follows: 
In Section \ref{sec: stochastic model}, we introduce the stochastic model along with its assumptions and the heavy traffic conditions. 
Section \ref{sec: weak convergence} is mainly devoted to the heavy traffic limit of the diffusion-scaled queue lengths, which is characterized by a coupled multivariate stochastic integral equation in Theorem \ref{weak convergence theorem}. 
We also exhibit the semimartingale decomposition of a special coupling equation to reveal the regulated property through underlying local time processes. 
In Section \ref{sec: little's law}, an asymptotic relationship between the queue length and its virtual waiting time is formulated in Theorem \ref{little's law}, and an interesting moment bound result (see Proposition \ref{SB of hat V (non-abandonment)}) of a proper scaled waiting time is established using a novel proof, where we utilize an order-preserving property for a specific matching queue model. 
In Section \ref{sec: convergence of cost functionals}, we establish the convergence of the expected value of an infinite-horizon discounted cost functional of queueing systems to that of the heavy traffic limits under some constraint over the discount factor. 
Section \ref{sec: numerical simulations} is devoted to numerical simulations of case studies. 
We conclude in Section \ref{sec: conclusion}. 
Appendix \ref{appendix1} contains all of our proofs except the proofs of our main weak convergence result. 

\textbf{Notation.} Let $\mathbb{N}$ represent the set of positive integers. Let $\mathbb{R}$ denote the one dimensional Euclidean space and $\mathbb{R}^K = \mathbb{R}\times\cdots\times\mathbb{R}$ denote the product of $K$ of Euclidean space $\mathbb{R}$. 
For $0< T\leq \infty$, let $D[0, T]$ denote the Skorokhod space of functions with right continuous and left limits (RCLL) and let $D^K[0, T]$ denote the product of $K$ of Skorokhod space $D[0, T]$. 
The vector norm and Frobenius matrix norm are defined by 
$\|\vect{x}\| = (\sum_{j=1}^K \abs{x_j}^2)^{\frac{1}{2}}$ and $\|\vect{y}\| = (\sum_{i=1}^K \sum_{j=1}^K \abs{y_{ij}}^2)^{\frac{1}{2}}$
for $\vect{x}\in\mathbb{R}^{K\time1}$ and $\vect{y}\in\mathbb{R}^{K\times K}$. 
Let $\vect{I} = (1, \cdots, 1)^\intercal\in\mathbb{R}^K$ denote a constant one vector and $\diamond$ represent the Hadamard entrywise product. 
The uniform norm on $[0, T]$ for process $\vect{X}$ in $D^{K}[0, T]$ is defined by
\begin{equation*}
    \|\vect{X}\|_T = \sup_{t\in[0, T]}\|\vect{X}(t)\|.  
\end{equation*}
Throughout, we use $\Rightarrow$ to denote weak convergence in $D^K[0, T]$. 
For any real number $a$, $a^+ = \max\{a, 0\}$ and $a^- = \max\{-a, 0\}$. For any two real numbers $a$ and $b$, $a\wedge b = \min\{a, b\}$ and $a\vee b = \max\{a, b\}$. 


\section{Stochastic Model}
\label{sec: stochastic model}

We consider a multi-component matching system on a probability space $(\Omega, \mathcal{F}, P)$. 
We assume a product is made of $K\geq 2$ disparate perishable components. 
Each component arrives randomly over time at the assembling station and waits in their respective queues according to their categories until matched. 
If a component of each category is available, then matching occurs instantaneously following FCFM discipline, and after that, those matched items leave the system instantaneously. 
In this model, each component departs from the system for two reasons: either getting matched or losing patience. 
Figure \ref{sample matching graph} shows a simple example of this matching system: each symbol represents a component, colors/shapes represent their different categories, and $\lambda_i$'s and $\delta_i$'s represent the arrival rates and abandonment rates, respectively. 

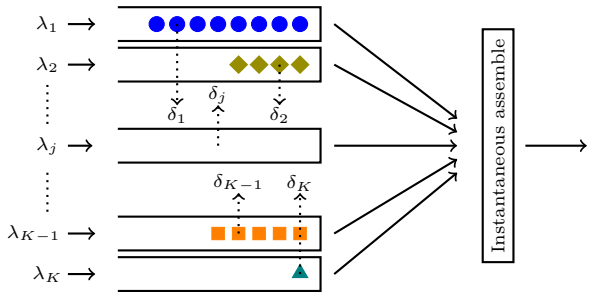
\begin{figure}[h!]
    \centering
    \begin{tikzpicture}[scale=0.90, circ/.style={shape=circle, fill, inner sep=2pt, draw, node contents=}]
        \draw node at (-4, 2) {\footnotesize $\lambda_1$};
        \draw[->, thick] (-3.7, 2) -- (-3.3, 2);
        \draw node at (-4, 1.4) {\footnotesize $\lambda_2$};
        \draw[->, thick] (-3.7, 1.4) -- (-3.3, 1.4);
        \draw [dotted, thick] (-4, 1.1) -- (-4, 0.5);
        \draw node at (-4, 0.2) {\footnotesize $\lambda_j$};
        \draw[->, thick] (-3.7, 0.2) -- (-3.34, 0.2);
        \draw [dotted, thick] (-4, -0.2) -- (-4, -0.8);
        \draw node at (-4.2, -1.1) {\footnotesize $\lambda_{K-1}$};
        \draw[->, thick] (-3.7, -1.1) -- (-3.3, -1.1);
        \draw node at (-4, -1.7) {\footnotesize $\lambda_K$};
        \draw[->, thick] (-3.7, -1.7) -- (-3.3, -1.7);
        
        \draw[thick] (-3, 2.25) rectangle (0, 1.75);
        \draw[thick] (-3, 1.65) rectangle (0, 1.15);
        \draw[thick] (-3, 0.45) rectangle (0, -0.05);
        \draw[thick] (-3, -0.85) rectangle (0, -1.35);
        \draw[thick] (-3, -1.45) rectangle (0, -1.95);

        \draw[white, ultra thick]  (-3, 2.3) -- (-3, -2);
        \draw node (q11) at (-0.3, 2) [blue, circ];
        \draw node (q12) at (-0.6, 2) [blue, circ];
        \draw node (q13) at (-0.9, 2) [blue, circ];
        \draw node (q14) at (-1.2, 2) [blue, circ];
        \draw node (q15) at (-1.5, 2) [blue, circ];
        \draw node (q16) at (-1.8, 2) [blue, circ];
        \draw node (q17) at (-2.1, 2) [blue, circ];
        \draw node (q18) at (-2.4, 2) [blue, circ];
        

        \node at (-0.3, 1.4) [olive, diamond, draw, fill, scale=0.5]{}; 
        \node at (-0.6, 1.4) [olive, diamond, draw, fill, scale=0.5]{}; 
        \node at (-0.9, 1.4) [olive, diamond, draw, fill, scale=0.5]{}; 
        \node at (-1.2, 1.4) [olive, diamond, draw, fill, scale=0.5]{}; 
        
        \node at (-0.3, -1.1) [orange, rectangle, draw, fill, scale=0.7]{}; 
        \node at (-0.6, -1.1) [orange, rectangle, draw, fill, scale=0.7]{}; 
        \node at (-0.9, -1.1) [orange, rectangle, draw, fill, scale=0.7]{}; 
        \node at (-1.2, -1.1) [orange, rectangle, draw, fill, scale=0.7]{}; 
        \node at (-1.5, -1.1) [orange, rectangle, draw, fill, scale=0.7]{};
        
        \node at (-0.3, -1.7) [teal, isosceles triangle, isosceles triangle apex angle=60, rotate=90, draw, fill, scale=0.4]{};

        \draw [->, thick] (0.2, 2) -- (2, 0.6);
        \draw [->, thick] (0.2, 1.4) -- (2, 0.4);
        \draw [->, thick] (0.2, 0.2) -- (2, 0.2);
        \draw [->, thick] (0.2, -1.1) -- (2, 0);
        \draw [->, thick] (0.2, -1.7) -- (2, -0.2);

        \node at (2.6, 0.2) [rectangle, draw, thick, rotate=90] (product) {\footnotesize Instantaneous assemble};
        \draw [->, thick] (3, 0.2) -- (3.9, 0.2);

        \draw [->, thick, dotted] (-2.1, 2) -- (-2.1, 0.8);
        \draw node at (-2.1, 0.65) {\footnotesize $\delta_1$};
        \draw [->, thick, dotted] (-0.6, 1.4) -- (-0.6, 0.8);
        \draw node at (-0.6, 0.65) {\footnotesize $\delta_2$};
        \draw [->, thick, dotted] (-1.5, 0.2) -- (-1.5, 0.8);
        \draw node at (-1.5, 0.95) {\footnotesize $\delta_j$};
        \draw [->, thick, dotted] (-1.2, -1.1) -- (-1.2, -0.5);
        \draw node at (-1.2, -0.35) {\footnotesize $\delta_{K-1}$};
        \draw [->, thick, dotted] (-0.3, -1.7) -- (-0.3, -0.5);
        \draw node at (-0.3, -0.35) {\footnotesize $\delta_K$};

    \end{tikzpicture}
    \caption{A queueing network view of a matching model with $K$ categories of components}
    \label{sample matching graph}
\end{figure}

Here, we are interested in a fast system as all the components 
arrive at high speed in concert and are subject to an identical average arrival rate. 
Moreover, each component has a short lifetime. 
The complexity of this model comes from the occurrence of abandoned components. 
Since the matching is instantaneous, we observe that it is not possible to have all positive numbers of components of each category waiting in their queues simultaneously. Thus, at least one queue corresponding to one of the categories is empty at any given time. 
In some scenarios, if we have two empty queues at some given time, the rest of the components need to wait for the missing parts. When one of the missing components arrives, it still has to wait for the other missing component. However, this component may abandon the system before the arrival of the last missing component, which again generates an empty queue. 
This phenomenon significantly makes the model more complicated than the one with no abandonment, and the matching queue model with abandonment is more realistic in the real world. 

In the rest of this section, we intend to exhibit the difficulty of direct analysis and then introduce the asymptotic framework of the matching queue system along with some basic assumptions. 

\subsection{Generator of the Matching Queue}

It is natural to perform direct analysis under Markovian assumptions such that arrivals follow Poisson distributions with arrival rate $\lambda_i>0$ and patience times $d_{i, k}$'s of each component $k$ in category $i$ are exponential distributed with rate $\delta_i>0$ for each category $i\in\{1, \cdots, K\}$. 
Here, the patience time of a component is independent of its arrival time as well as the arrival times and patience times of those components who arrived earlier, and they are also independent of everything else in the system. 
However, one may expect severe difficulties due to the coupling behavior of the queue length processes, and therefore, the asymptotic analysis comes into play (see Section \ref{sec: asymptotic framework}), which will be discussed in great detail in later sections. 

For 
completeness
and to take a closer look, we present a construction of the generator of the queue lengths. 
One can observe that the occurrence of a match depends on all distinct categories. 
The state process should properly reflect such a relationship: at least one queue is empty at any time. Hence, we define the state space as 
\begin{equation}
E := \left\{(s_1, s_2, \cdots, s_K)\in E^{(1)}\times E^{(2)}\cdots\times E^{(K)}: \prod_{j=1}^K s_j = 0 \right\}, 
\label{eq: state space}
\end{equation}
where $E^{(j)} := \{0, 1, \cdots\}$ for $j = 1, \cdots, K$, and $s_j$'s denote the queue length of the $j$th queue. 

Under the Markovian assumptions, the queue length vector $(Q_1, \cdots, Q_K)$ is a Markov chain on $\mathbb{Z}_+^K$ with rate matrix given by
\begin{equation}
\begin{aligned}
&\quad Q((s_1, \cdots, s_K), (s_1', \cdots, s_K')) \\
&= 
\begin{cases}
\lambda_i, & \text{ if }\left\{s_1' = s_1, \cdots, s_i' = s_i + 1, \cdots, s_K' = s_K, \text{ and } \prod_{j\neq i}^K s_j = 0\right\}, \\
& \text{ or } \left\{s_1' = s_1 - 1, \cdots, s_i' = s_i, \cdots, s_K' = s_K - 1, \text{ and } \prod_{j\neq i}^K s_j \neq 0\right\}, \\
s_i\delta_i, & \text{ if } s_1' = s_1, \cdots, s_i' = s_i - 1, \cdots, s_K' = s_K,  
\end{cases}
\end{aligned}
\label{eq: rate matrix}
\end{equation}
where $i = 1, 2, \cdots, K$, and $(s_1, \cdots, s_K), (s_1', \cdots, s_K')\in E$. 
To understand the rate matrix, we may take the queue length of the first queue as an example, where we have $i=1$ in \eqref{eq: rate matrix}. Since we do not know the queue structure at this time instant, we have to decompose our state into two cases: first, the other queues must have at least one empty queue; second, none of the other queues are empty, which is also the complementary event of the former case. In the former case, a new arrival to the first queue leads to an increment of the queue length of the first queue and cannot formulate any matches since some queues (other than the first queue) are empty. However, in the latter case, the first queue must be an empty queue due to the state space \eqref{eq: state space}, and a new arrival to the first queue results in a match. Further, components of category $i$ may abandon their queue with abandonment rate $s_i\delta_i\geq 0$. 

Therefore, the generator for the pure jump process $(Q_1, \cdots, Q_K)$ can be written as
\begin{equation}
\begin{aligned}
    &\quad Af(s_1, \cdots, s_K) \\
    &= \sum_{i=1}^K \lambda_i \bigg[\left(f(s_1, \cdots, s_i+1, \cdots, s_K) - f(s_1, \cdots, s_K)\right) \mathbbm{1}_{\left[\prod_{j\neq i}^K s_j = 0\right]}\\
    &\quad\quad\quad\quad\quad +(f(s_1 - 1, \cdots, s_i, \cdots, s_K-1) - f(s_1, \cdots, s_K)) \mathbbm{1}_{\left[\prod_{j\neq i}^K s_j \neq 0\right]}\bigg]\\
    &\quad + \sum_{i=1}^K s_i\delta_i (f(s_1, \cdots, s_i - 1, \cdots, s_K) - f(s_1, \cdots, s_K)),  
\end{aligned}
\label{eq: generator}
\end{equation}
where $f\in C^2(\mathbb{R}^K)$. 
One can observe that its direct analysis may not be trivial due to the coupling behaviors, which are characterized by the indicator functions.

\subsection{Asymptotic Framework}
\label{sec: asymptotic framework}

To perform asymptotic analysis, we develop a sequence of independent matching queue systems parameterized by $n\in\mathbb{N}$ such that the arrival rate of each queue gets increasingly large without bound in concert when we let $n$ tend to infinity. Quantities that depend on $n$ have $n$ as a superscript in their notations, and a subscript tells the associated category. Since the matching happens instantaneously, we can interpret it as an extremely large service rate in normal queueing systems. 
Intuitively, suppose we speed up the whole system by letting $n\to\infty$. 
The arrival rates get extremely large to obtain heavy traffic conditions, which leads to a situation where even though components from each category arrive quite frequently, intuitively, the instantaneous matching and abandonment should guarantee non-explosive queues. 
On the other hand, if the inter-arrival times are large, instantaneous matching may lead to numerous empty queues since they may not be patient enough. 

Within these facts, we construct the $n$th matching queue system. 
Let the queue length process, $\vect{Q^n}(\cdot) = (Q_{1}^{n}(\cdot), \cdots, Q_K^n(\cdot))^\intercal$ denote the state process. 
For each $i\in\{1, \cdots, K\}$, let $A_{i}^{n}(\cdot)$ and $G_i^n(\cdot)$ be two independent processes represent the number of arrivals and abandonments of category $i$ in the $n$th system respectively. 
We assume that $A_i^n(\cdot)$ follows a Poisson process in $D([0, \infty), \mathbb{R})$ with arrival rate $\lambda_i^n > 0$, and $\{A_k^n\}_{1\leq k\leq K}$ are all independent with each other. 
Moreover, we assume $\lambda_{i}^{n}\to\infty$ as $n\to\infty$ for each $i$.
We also assume that the abandonment processes follow independent Poisson processes with respective parameter $\delta_i^n > 0$ such that it is constructed by
\begin{equation}
    G_{i}^{n}(t) := N_i\left(\delta_{i}^{n}\int_0^t Q_{i}^{n}(s)ds\right), 
    \label{abandonment process}
\end{equation}
where $\delta_{i}^{n}>0$ is a constant and $N_i$'s are independent unit rate Poisson processes. 
We assume $\lim_{n\to\infty}\delta_i^n = \delta_i$, where $\delta_i>0$ is a real number. 
More precisely, one can think of the patience time of a component is independent of its arrival time as well as the arrival times and patience times of those components who arrived earlier, and they are also independent of everything else in the system. 
Notice that a random time change (see II.6 in \cite{bremaud1981point} and Chapter 6 in \cite{ethier2009markov}) is employed in the construction above since the instantaneous overall abandonment rate at time $s$ is $\delta_i^nQ_i^n(s)$, which is the multiplication of the number $Q_i^n(s)$ of components waited in queue and the individual patience rate $\delta_i^n$ (see Section 2.1 and 7.1 in \cite{pang2007martingale}). 

Since the occurrence of a match is instantaneous and relies only on the number of arrivals and abandonments, the number of completed matches by time $t$ depends on all the arrivals $(A_1^n(t), A_2^n(t), \cdots, A_K^n(t))$ and the abandonments $(G_1^n(t), G_2^n(t), \cdots, G_K^n(t))$ by time $t$. 
We introduce the natural filtration $\mathcal{F}^n = (\mathcal{F}_t^n)_{t\geq0}$ by
\begin{equation}
    \mathcal{F}_t^n := \sigma\left(Q_i^n(0), A_i^n(s), G_i^n(s): 0\leq s\leq t \text{ and } 1\leq i \leq K\right).
    \label{natural filtration}
\end{equation}
It also represents all the information available regarding the $n$th system at time $t$. 

We describe other basic assumptions and exhibit the heavy traffic assumption for the sequence of matching queue systems as follows. 

\textbf{Assumption 1} (Initial conditions). For each $i\in\{1, \cdots, K\}$, let $Q_i^n(0)\geq0$ denote the number of initial components of category $i$ in the $n$th system. It is assumed to be deterministic and independent of each other and satisfies
\begin{equation}
    \lim_{n\to\infty} \frac{Q_i^n(0)}{\sqrt{n}} = x_i, 
    \label{initial state limit}
\end{equation}
where $x_i\geq0$ is a real number. For convenience, we assume those initial components of each category $i$ do not abandon, and they will get matched eventually. 
This is not a restrictive assumption but is for ease of analysis and can actually be 
relaxed
(see Assumption 2.1 of \cite{liu2021admission}, Lemma 4.1 of \cite{weerasinghe2014diffusion}, Lemma 2.1 of \cite{mandelbaum2012queues}). 

Notice that since the instantaneous matching policy, at least one of the entry in $\vect{Q^n}(0) = (Q_1^n(0), Q_2^n(0), \cdots, Q_K^n(0))^\intercal$ is zero and so does the limiting initial states $\vect{x} = (x_1, \cdots, x_K)^\intercal$. Here we have $\prod_{j=1}^K Q_j^n(0) = \prod_{j=1}^K x_j = 0$. 

\textbf{Assumption 2} (Heavy-traffic condition). 
For each $i\in\{1, \cdots, K\}$, there exists a constant $\lambda_0 > 0$ so that
\begin{equation}
    \lim_{n\to\infty}\frac{\lambda_i^n -\lambda_0 n}{\sqrt{n}}=\beta_i, 
    \label{regime}
\end{equation}
for each $i\in\{1, \cdots, K\}$, where $\beta_i$ is a real number. 

\begin{remark}
    Even though it is natural to assume renewal arrivals and general distributed patience times, it is a challenging problem due to the nature of the instantaneous multiclass matching discipline. Later on in the main Theorem \ref{weak convergence theorem} below, one could see the benefits of the Markovian assumptions that provide a non-trivial coupling martingale representation. However, this is not inherited under general assumptions, and this difficulty also results in uncertain stochastic boundedness of the queue lengths. 
    The model with general assumptions will be addressed in future projects. 
\end{remark}

Under the above assumptions, we introduce the matching discipline, which is characterized by matching completions. Let $\Tilde{R}^n(\cdot)$ represent the cumulative number of matches happened by time $t$ and it is given by
\begin{equation}
    \Tilde{R}^n(t) := \min_{1\leq j\leq K} \left\{Q_j^n(0) + A_j^n(t) - L_j^n(t)\right\}, 
    \label{number of matches with L}
\end{equation}
where the process $L_j^n(\cdot)$ denotes the number of components who entered the $j$th queue by time $t$ and eventually abandoned the system, albeit we do not observe future information. 
Recall that $G_i^n(t)$ introduced in \eqref{abandonment process} counts the number of abandoned components of category $i$ during $[0, t]$. 
Some components of category $i$ in its queue may still abandon after time $t$ and those components will never get matched. 
Hence the matching completions depend entirely on those non-abandoned parts. 
Thus, $L_i^n(\cdot)$ comes into the picture in \eqref{number of matches with L}. 
However, since it is not possible to observe future information at any given time $t$, we need an alternative definition.  
Analogous to \eqref{number of matches with L}, we define 
\begin{equation}
    R^n(t) := \min_{1\leq j\leq K} \left\{Q_j^n(0) + A_j^n(t) - G_j^n(t)\right\},
    \label{number of matches with G}
\end{equation}
for $t\geq0$. 
One can observe that for all $t\geq 0$ and any $i$, 
\begin{equation*}
    R^n(t) -\Tilde{R}^n(t) = \min_{1\leq j\leq K} \left\{Q_j^n(0) + A_j^n(t) - G_j^n(t)-\Tilde{R}^n(t)\right\}, 
\end{equation*}
which equates to $\min_{1\leq j\leq K} \left\{Q_j^n(t)\right\}$ due to the matching discipline. Since at least one queue is empty throughout the evolution, we deduce that $R^n(t) = \Tilde{R}^n(t)$ for all $t\geq 0$, and we can interpret $R^n(t)$ as the number of completed matches by time $t$. 
Thus, a sequence of queue length process of category $i$ is defined as
\begin{equation}
    Q_i^n(t) = Q_i^n(0) + A_i^n(t) - G_i^n(t) - R^n(t).  
    \label{queue length}
\end{equation}
Since $R^n(\cdot)$ process depends on all the categories, we observe that the queue length processes as in \eqref{queue length} for each $i$ are mutually coupled if we manage to cancel out the common $R^n(\cdot)$ process. 
Thus, we can simply call \eqref{queue length} the coupled queue length processes. 
Our objective is to understand the behaviors of an appropriately scaled queue length process when all components arrive quite fast in concert as $n$ tends to infinity.

\section{Weak Convergence}\label{sec: weak convergence}

This section is devoted to asymptotic analysis by characterizing the weak convergence of diffusion-scaled queue lengths in $D^K[0, T]$ as $n\to\infty$. 
First, we introduce the following centered and scaled quantities:
\begin{equation}
\begin{aligned}
    \hat{Q}_i^n(t) &:= \frac{Q_i^n(t)}{\sqrt{n}}, \quad \hat{A}_i^n(t) := \frac{A_i^n(t) - \lambda_i^nt}{\sqrt{n}}, \\
    \hat{G}_i^n(t) &:= \frac{G_i^n(t)}{\sqrt{n}}, \quad \hat{R}^n(t) := \frac{R^n(t) - \lambda_0 nt}{\sqrt{n}},  
\end{aligned}
\label{diffusion scales}
\end{equation}
for all $t\geq0$ and $i\in\{1, \cdots, K\}$. 
By using \eqref{queue length} and \eqref{diffusion scales}, the diffusion-scaled queue length process can be reformulated as
\begin{equation}
    \hat{Q}_i^n(t) = \hat{Q}_i^n(0) + \hat{A}_i^n(t) + \frac{\lambda_i^n - \lambda_0 n}{\sqrt{n}} t - \hat{G}_i^n(t) - \hat{R}^n(t), 
    \label{diffusion scaled queue length}
\end{equation}
where 
\begin{equation}
    \hat{R}^n(t) = \min_{1\leq j\leq K}\left\{\hat{Q}_j^n(0) + \hat{A}_j^n(t) + \frac{\lambda_j^n - \lambda_0 n}{\sqrt{n}} t - \hat{G}_i^n(t)\right\}. 
\end{equation}

\begin{remark}
    Under our assumption of the Markovian arrivals, the diffusion scaled arrival process $\hat{A}_i^n$ satisfies that for each $i$ and $T>0$, 
    \begin{equation}
        \hat{A}_i^n \Rightarrow \sigma_i W_i, 
        \label{weak convergence of hat A}
    \end{equation}
    in $ D[0, T]$ as $n\to\infty$, where $\sigma_i>0$ is a constant and $W_i(\cdot)$'s are $K$ independent standard Brownian motions. It also satisfies the moment condition:
    \begin{equation}
        E\left[\sum_{j=1}^K \|\hat{A}_j^n\|_T^2\right]\leq C_0(1+T^m),
        \label{moment condition on hat A}
    \end{equation}
    for $T>0$, where $C_0$ and $m \geq 1$ are constants independent of $T$ and $n$, and more precisely, $m=1$ for the second-moment case (for details, refer to Lemma 2 in \cite{atar2004scheduling} and Theorem 4 in \cite{krichagina1992diffusion}). 
\end{remark}

We first present the main result of the diffusion approximation of the matching model in the following Theorem \ref{weak convergence theorem}, and the rest of this section will be devoted to its discussion and proof. 

\begin{theorem}
    \label{weak convergence theorem}
    Let $T>0$ and Assumptions 1-2 hold under the above Markovian assumptions. Consider the state process $\vect{\hat{Q}^n}(t) = (\hat{Q}_1^n(t), \cdots, \hat{Q}_K^n(t))^\intercal\in D^K[0, T]$, where $\hat{Q}_i^n(t)$ satisfies \eqref{diffusion scaled queue length} for all $t\geq0$. Then the sequence $(\vect{\hat{Q}^n})$ converges weakly to a diffusion process $\vect{X}$ in the space $ D^K[0, T]$ as $n\to\infty$. Moreover, the heavy traffic limiting diffusion process $\vect{X}(\cdot)=(X_1(\cdot), \cdots, X_K(\cdot))^\intercal$ is a unique strong solution to the coupled stochastic integral equation: 
    \begin{equation}
        \vect{X}(t) = \vect{x} + 
        \vect{\beta} t + 
        \Sigma \vect{W}(t) - \int_0^t \vect{\delta} \diamond \vect{X}(s)ds - R(t)\vect{I},
        \label{limiting process (model with abandonment)}
    \end{equation}
    where $\vect{x}=(x_1, \cdots, x_K)^\intercal$ and each entry $x_i\geq 0$ is a real number given by \eqref{initial state limit}, $\vect{\beta} = (\beta_1, \cdots, \beta_K)^\intercal$ and $\vect{\Sigma} = \text{diag}(\sigma_1, \cdots, \sigma_K)$ are constants given by \eqref{regime} and \eqref{weak convergence of hat A}, $\vect{\delta} = (\delta_1, \cdots, \delta_K)^\intercal$ are positive real numbers given by \eqref{abandonment process}, $\vect{W} = (W_1, \cdots, W_K)^\intercal$ and $\{W_j\}_{1\leq j\leq K}$ are $K$ independent standard Brownian motions, 
    \begin{equation}
        R(t) := \min_{1\leq j\leq K}\left\{x_j +\beta_jt + \sigma_jW_j(t) - \int_0^t \delta_j X_j(s)ds\right\}
        \label{R in limiting process}
    \end{equation}
    for $t\in [0, T]$, and $\vect{I} = (1, \cdots, 1)^\intercal\in\mathbb{R}^K$. 
    Additionally, the product $\prod_{j=1}^K X_j(\cdot) = 0$. 
\end{theorem}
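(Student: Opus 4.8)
The plan is to split the argument into three parts: (a) pathwise existence and uniqueness for the coupled equation \eqref{limiting process (model with abandonment)}--\eqref{R in limiting process}; (b) a martingale reformulation of $\vect{\hat{Q}^n}$ together with its stochastic boundedness and $C$-tightness on $[0,T]$; and (c) identification of every weak subsequential limit as the solution of \eqref{limiting process (model with abandonment)}, which by (a) forces $\vect{\hat{Q}^n}\Rightarrow\vect{X}$ and delivers $\prod_{j=1}^KX_j(\cdot)\equiv0$.

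For (a), the decisive point is that \eqref{R in limiting process} gives $R$ \emph{explicitly} as a pointwise minimum, so \eqref{limiting process (model with abandonment)} is a fixed-point equation rather than a reflection problem. Putting $Y_j(t):=x_j+\beta_jt+\sigma_jW_j(t)-\delta_j\int_0^tX_j(s)\,ds$, one has $R(t)=\min_jY_j(t)$, $X_j(t)=Y_j(t)-\min_kY_k(t)$, and the right-hand side of \eqref{limiting process (model with abandonment)} depends on $\vect{X}$ only through the integrals $\int_0^{\cdot}X_j$. Since $\vect{a}\mapsto\min_ja_j$ is $1$-Lipschitz in the sup-norm, for any two solutions $\|\vect{X}(t)-\vect{X}'(t)\|\le C\int_0^t\|\vect{X}(s)-\vect{X}'(s)\|\,ds$ with $C=C(K,\vect{\delta})$, so Gr\"onwall gives pathwise uniqueness, and the matching contraction estimate makes Picard iteration converge in $C([0,T];\mathbb{R}^K)$ and produces a strong solution (which is in fact a diffusion, since $\vect{Y}$ solves $dY_j=\beta_j\,dt+\sigma_j\,dW_j-\delta_j(Y_j-\min_kY_k)\,dt$ with Lipschitz coefficients and $\vect{X}=\vect{Y}-(\min_kY_k)\vect{I}$). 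Moreover $\min_jX_j(t)=\min_jY_j(t)-\min_kY_k(t)=0$ and $X_j(t)\ge0$ for all $t$, so $\prod_{j=1}^KX_j(\cdot)\equiv0$.

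For (b), I compensate the state-dependent abandonment: with $G_i^n(t)=N_i(\delta_i^n\int_0^tQ_i^n(s)\,ds)$ and $N_i=\tilde{N}_i+\mathrm{id}$, \eqref{diffusion scaled queue length} becomes
\begin{equation*}
\hat{Q}_i^n(t)=\hat{Q}_i^n(0)+\hat{M}_i^n(t)+\beta_i^n t-\delta_i^n\!\int_0^t\hat{Q}_i^n(s)\,ds-\hat{R}^n(t),\qquad\beta_i^n:=\tfrac{\lambda_i^n-\lambda_0 n}{\sqrt{n}},
\end{equation*}
where $\hat{M}_i^n:=\hat{A}_i^n-\tfrac1{\sqrt{n}}\tilde{N}_i\big(\delta_i^n\sqrt{n}\int_0^{\cdot}\hat{Q}_i^n\big)$ is an $\mathcal{F}^n$-martingale (by the random-time-change/optional-stopping argument, cf.\ \cite{pang2007martingale,bremaud1981point}) and $\hat{R}^n(t)=\min_j\{\hat{Q}_j^n(0)+\hat{M}_j^n(t)+\beta_j^n t-\delta_j^n\int_0^t\hat{Q}_j^n(s)\,ds\}$. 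Writing $Z_j^n$ for the $j$th term inside this minimum, one has $0\le\hat{Q}_j^n(t)=Z_j^n(t)-\min_kZ_k^n(t)\le2\max_k|Z_k^n(t)|$; combining this with $\langle\hat{A}_i^n\rangle_t=(\lambda_i^n/n)t$, $\langle\tfrac1{\sqrt{n}}\tilde{N}_i(\cdots)\rangle_t=(\delta_i^n/\sqrt{n})\int_0^t\hat{Q}_i^n(s)\,ds$, the Doob and Burkholder--Davis--Gundy inequalities, and the moment bound \eqref{moment condition on hat A}, and localizing at $\tau_M^n:=\inf\{t:\|\vect{\hat{Q}^n}(t)\|>M\}$, yields a Gr\"onwall inequality for $E[\sup_{s\le t}\|\vect{Z^n}(s)\|^2]$ and hence $\sup_nE[\|\vect{\hat{Q}^n}\|_T^2]<\infty$. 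This self-bounding step --- where the coupling through $\min$ ties all coordinates together and the abandonment martingale's quadratic variation itself involves $\vect{\hat{Q}^n}$ --- is the part I expect to be the main obstacle. Stochastic boundedness, the oscillation control carried over from \eqref{weak convergence of hat A}, and the $O(1/\sqrt{n})$ jump sizes then give $C$-tightness of $(\vect{\hat{Q}^n},\vect{\hat{M}^n},\int_0^{\cdot}\vect{\hat{Q}^n})$ in the relevant product Skorokhod space.

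For (c), along any convergent subsequence the compensated abandonment martingale vanishes uniformly on $[0,T]$ in probability, since its predictable quadratic variation is $(\delta_i^n/\sqrt{n})\int_0^t\hat{Q}_i^n\le(\delta_i^nT/\sqrt{n})\|\vect{\hat{Q}^n}\|_T=O_P(n^{-1/2})$; with $\hat{A}_i^n\Rightarrow\sigma_iW_i$ from \eqref{weak convergence of hat A} (the $\{W_j\}$ independent, inherited from the independence of the $A_j^n$), $\hat{Q}_i^n(0)\to x_i$, and $\beta_i^n\to\beta_i$, and using continuity of $\vect{z}\mapsto\int_0^{\cdot}\vect{z}$ and of the pointwise minimum at continuous limit paths (the limit being $C$-tight, convergence is uniform), any weak limit $(\vect{X},\vect{W})$ satisfies \eqref{limiting process (model with abandonment)} with $R$ as in \eqref{R in limiting process}, on a filtration for which $\vect{W}$ is a standard Brownian motion (adaptedness via the usual martingale-problem argument). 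By the uniqueness from (a), all subsequential limits coincide, so $\vect{\hat{Q}^n}\Rightarrow\vect{X}$ in $D^K[0,T]$; the identity $\prod_{j=1}^KX_j(\cdot)\equiv0$ was obtained in (a), and it also passes to the limit from $\prod_{j=1}^K\hat{Q}_j^n(t)=0$ by the continuous-mapping theorem, as $\vect{X}$ has continuous paths.
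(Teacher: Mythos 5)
Your proposal is correct, but it follows a genuinely different route from the paper in both of its substantive parts.

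\textbf{Existence and uniqueness of \eqref{limiting process (model with abandonment)}.} The paper proves this (and simultaneously the continuity of the solution map) via Theorem~\ref{continuity of integral representation}: it works directly with $\vect{X}$, bounds $|R^{(1)}(t)-R^{(2)}(t)|$ by choosing an argmin index and dominating with the sum $\sum_k\delta_k\int_0^t|x^{(1)}_k-x^{(2)}_k|$, and runs a contraction mapping over a partition of $[0,T]$. Your argument instead passes to $Y_j:=x_j+\beta_jt+\sigma_jW_j-\delta_j\int_0^\cdot X_j$, observes that $\vect{Y}$ solves the SDE $dY_j=\beta_j\,dt+\sigma_j\,dW_j-\delta_j(Y_j-\min_kY_k)\,dt$ with globally Lipschitz coefficients, and recovers $\vect{X}=\vect{Y}-(\min_kY_k)\vect{I}$. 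This is arguably a cleaner reduction, since it outsources existence, uniqueness and the diffusion/Markov property to standard Lipschitz-SDE theory. The one thing the paper's route buys that yours does not automatically supply is a Lipschitz continuity estimate for the map $\vect{y}\mapsto\vect{x}$ in sup-norm, which the paper needs downstream.

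\textbf{Weak convergence.} Here the strategies diverge more sharply. The paper bypasses a standalone $C$-tightness argument for $\vect{\hat{Q}^n}$: it writes $\vect{\hat{Q}^n}=F(\vect{\xi^n})$ with $\vect{\xi^n}$ the ``input'' (initial condition, drift, arrival and abandonment martingales), shows $\vect{\xi^n}\Rightarrow\vect{\xi}$, and concludes by the continuous mapping theorem with $F$ continuous (Theorem~\ref{continuity of integral representation}). You instead establish stochastic boundedness and $C$-tightness of $(\vect{\hat{Q}^n},\vect{\hat{M}^n},\int_0^\cdot\vect{\hat{Q}^n})$, pass to a weakly convergent subsequence, identify the limit with the coupled equation by pushing continuity of the elementary operations $(\vect{z}\mapsto\int_0^\cdot\vect{z}$, $\vect{a}\mapsto\min_j a_j)$ through the prelimit identity, and then invoke uniqueness to pin down the full sequence. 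Your identification step only needs continuity of these elementary maps at continuous limit points, which is weaker than Theorem~\ref{continuity of integral representation}; the price is a separate tightness argument. Note that both routes still require the stochastic boundedness of $\vect{\hat{Q}^n}$ to show that the compensated abandonment martingale vanishes (quadratic variation $O_P(n^{-1/2})$), and your trick $0\le\hat Q_j^n(t)=Z_j^n(t)-\min_kZ_k^n(t)\le2\max_k|Z_k^n(t)|$ plays essentially the same role as the paper's domination of $|\hat R^n|$ by the sum of all entries in Proposition~\ref{moment bound and stochastical boundedness of hat Q}, and closes the self-referential Gr\"onwall step in the same way (Cauchy--Schwarz plus $\sqrt x\le1+x$ to linearize the half-power coming from the Burkholder bound).

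One small caveat worth flagging explicitly if you write this up: when you invoke ``adaptedness via the usual martingale-problem argument,'' it is cleanest to note that measurability of $\vect{W}\mapsto\vect{X}$ comes for free from the Picard/Lipschitz construction of $\vect{Y}$, so the constructed $\vect{X}$ is indeed a strong solution adapted to the Brownian filtration; otherwise identification only gives a weak solution a priori.
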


Some comments about Theorem \ref{weak convergence theorem} are in order. First, the joint convergence of $\vect{\hat{Q}^n}$ is critical since the entries of the diffusion-scaled queue length process vector $\vect{\hat{Q}^n}$ are coupled with each other, and this coupling phenomenon is preserved in the heavy traffic limiting process. This happens because of the scalar-valued $\hat{R}^n(\cdot)$ term, which remains identical throughout all the queue length expressions as in \eqref{diffusion scaled queue length}. The same is true for the scalar-valued $R(\cdot)$ process as in \eqref{limiting process (model with abandonment)}, which can be canceled out by substituting it from individual expressions of $X_i$'s. 
Second, the coupling behavior persists in the heavy traffic limit \eqref{limiting process (model with abandonment)}. Thus, we can call \eqref{limiting process (model with abandonment)} a coupled stochastic integral equation. To the best of our knowledge, the heavy traffic limit obtained in Theorem \ref{weak convergence theorem} is quite different in its character from the regular heavy traffic limits of queueing systems. 
We have seen that in Figure \ref{Graph of sample paths of matching queues}, the coupling behavior of the heavy traffic limit \eqref{limiting process (model with abandonment)} remains, which results in zero entries.  
Third, this scalar-valued non-linear term is more complicated than it looks. 
We conjecture that there may be some underlying local time terms that admit a semimartingale decomposition. 
However, such a property is not trivial in general due to the past dependence. 
Particularly, Section \ref{sec: Semimartingale Property of the Coupled Process} provides an 
explicit
semimartingale decomposition for a special case of coupled equations as an instance.

Additionally, in the case of matching queue with no abandonment, we can relax our assumption for the arrival process by assuming renewal-type of arrival processes, and assumptions for initial queue lengths and the heavy traffic assumption are preserved. 
We can obtain an analogous heavy-traffic approximation for this specific model (see \cite{xie2022topics} for more details), which involves a special order-preserving property of its matching discipline (see Section \ref{sec: little's law}). This result will be used to demonstrate an interesting moment-bound result in Proposition \ref{SB of hat V (non-abandonment)} in later discussions, and it could also serve as a motivation for general assumptions in future studies. 
For completion, we exhibit the asymptotic result below, whose proof is omitted. 

\begin{proposition}[Theorem 4 in \cite{xie2022topics}]
    \label{weak convergence theorem (non-abandonment)}
    Let $T>0$ and we assume the arrival process $A_i^n(\cdot)$ follows the independent renewal-type process in $D([0, \infty), \mathbb{R})$ with rate $\lambda_i^n>0$ for each $i$. 
    Suppose the Assumptions 1-2 hold. 
    Then, the queue length $\vect{\hat{Q}^n} = (\hat{Q}_1^n, \cdots, \hat{Q}_K^n)^\intercal$, where 
    \[
    \hat{Q}_i^n(t) = \hat{Q}_i^n(0) + \hat{A}_i^n(t) + \frac{\lambda_i^n - \lambda_0 n}{\sqrt{n}} t - \hat{R}^n(t), 
    \]
    for $i\in\{1, \cdots, K\}$ and $\hat{R}^n(t) := \min_{1\leq j\leq K} \{\hat{Q}_j^n(0) + \hat{A}_j^n(t) + (\lambda_i^n - \lambda_0 n)t/\sqrt{n} \}$, converges weakly to $\vect{X} = (X_1, \cdots, X_K)^\intercal$ in $D^K[0, T]$ as $n\to\infty$, where $\vect{X}$ satisfies
    \begin{equation}
        \vect{X}(t) = 
        \vect{X}(0) + 
        \vect{\beta} t + 
        \Sigma \vect{W}(t) - 
        R(t)\vect{I},
        \label{limiting process (non-abandonment)}
    \end{equation}
    for all $t\in[0, T]$, where $\vect{\beta}$ and $\Sigma$ are constant vector and diagonal matrix, $\{W_i\}_{1\leq i\leq K}$ are $K$ independent standard Brownian motions, the scalar-valued process
    \begin{equation}
        R(t) = \min_{1\leq j\leq K}\left\{X_j(0)+\beta_j t + \sigma_j W_j(t) \right\},
        \label{R(t) (non-abandonment)}
    \end{equation}
    and $\vect{I} = (1, \cdots, 1)^\intercal\in\mathbb{R}^K$. 
    Moreover, the product $\prod_{j=1}^K X_j(t) = 0$ for any $t\in[0, T]$. 
\end{proposition}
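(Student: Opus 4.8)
The plan is to exploit the fact that, once reneging is removed, the matching primitive $\hat{R}^n$ is no longer defined through an implicit equation but is an \emph{explicit} continuous functional of the centered net-input processes, so the whole proof collapses to a functional central limit theorem followed by the continuous mapping theorem. Concretely, set
$\hat{Y}_j^n(t) := \hat{Q}_j^n(0) + \hat{A}_j^n(t) + \frac{\lambda_j^n - \lambda_0 n}{\sqrt{n}} t$ for $j = 1, \dots, K$, so that $\hat{R}^n(t) = \min_{1\le j\le K} \hat{Y}_j^n(t)$ and $\hat{Q}_i^n(t) = \hat{Y}_i^n(t) - \min_{1\le j\le K}\hat{Y}_j^n(t)$. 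Introduce the map $\Phi\colon D^K[0,T]\to D^K[0,T]$, $\Phi(\vect{y})_i(t) := y_i(t) - \min_{1\le j\le K} y_j(t)$. Then $\vect{\hat{Q}}^n = \Phi(\vect{\hat{Y}}^n)$, and it suffices to prove $\vect{\hat{Y}}^n \Rightarrow \vect{Y}$ in $D^K[0,T]$, where $Y_j(t) = x_j + \beta_j t + \sigma_j W_j(t)$, and then invoke the continuity of $\Phi$ at $\vect{Y}$.

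First I would establish $\vect{\hat{Y}}^n \Rightarrow \vect{Y}$. By Assumption 1, $\hat{Q}_j^n(0)\to x_j$, and by Assumption 2, $\frac{\lambda_j^n-\lambda_0 n}{\sqrt{n}}\,t\to\beta_j t$ uniformly on $[0,T]$, both deterministically. For the renewal arrival streams I would invoke a Donsker-type functional central limit theorem for renewal counting processes (finite-variance interarrival times suffice): $\hat{A}_j^n \Rightarrow \sigma_j W_j$ in $D[0,T]$, with $\sigma_j>0$ determined by the limiting rate and the squared coefficient of variation of the category-$j$ interarrival law. Since $\{A_j^n\}_{1\le j\le K}$ are independent across $j$, the vector $(\hat{A}_1^n,\dots,\hat{A}_K^n)^\intercal$ converges jointly to $(\sigma_1 W_1,\dots,\sigma_K W_K)^\intercal$; because the limit has a.s.\ continuous paths, this convergence is in fact strong enough (to a continuous limit) that adding the continuous deterministic drift and initial terms is a continuous operation, yielding $\vect{\hat{Y}}^n \Rightarrow \vect{Y}$ in $D^K[0,T]$.

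Next I would apply the continuous mapping theorem. Because $\vect{Y}$ has a.s.\ continuous sample paths, it suffices to check that $\Phi$ is continuous at continuous limit points; there Skorokhod $J_1$ convergence reduces to uniform convergence, and the elementary bound
$\sup_{t\le T}\abs*{\min_{j} y_j(t) - \min_{j} y_j'(t)} \le \sup_{t\le T}\max_{j}\abs*{y_j(t)-y_j'(t)}$
shows $\Phi$ is Lipschitz in the uniform norm, hence continuous there. Therefore $\vect{\hat{Q}}^n = \Phi(\vect{\hat{Y}}^n) \Rightarrow \Phi(\vect{Y}) =: \vect{X}$, and reading off coordinates gives precisely \eqref{limiting process (non-abandonment)}, with $R(t) = \min_{1\le j\le K} Y_j(t) = \min_{1\le j\le K}\{x_j+\beta_j t+\sigma_j W_j(t)\}$ as in \eqref{R(t) (non-abandonment)}; note $\vect{X}(0)=\vect{x}$ since $\min_j x_j = 0$ by Assumption 1. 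Finally $X_i = Y_i - \min_j Y_j \ge 0$, and $\min_{1\le i\le K} X_i(t) = \min_i Y_i(t) - \min_j Y_j(t) = 0$ for every $t$, whence $\prod_{j=1}^K X_j(t)=0$; uniqueness (indeed strong uniqueness) is immediate because, unlike \eqref{limiting process (model with abandonment)}, here $\vect{X}$ is given in closed form as a measurable functional of $\vect{W}$, with no fixed-point equation to solve.

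The main obstacle, relative to Theorem \ref{weak convergence theorem}, is comparatively mild: it is the functional central limit theorem for the renewal counting processes under general finite-variance interarrival laws, together with the care needed about the $D^K[0,T]$ topology — ensuring the joint convergence is to a continuous limit so that the map $\Phi$, which is Lipschitz in the uniform norm, inherits continuity at that limit and the continuous mapping theorem applies. There is deliberately no analogue here of the difficult ingredients of the abandonment case — no implicit Skorokhod-type problem, no coupling martingale representation, and no separate stochastic-boundedness estimate for $\vect{\hat{Q}}^n$ — precisely because removing reneging turns $\hat{R}^n$ into an explicit minimum rather than the solution of a coupled equation.
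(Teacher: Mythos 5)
Your proof is correct, and it is precisely the natural simplification of the machinery the paper uses for Theorem~\ref{weak convergence theorem}. In the paper's framework, the abandonment case requires Theorem~\ref{continuity of integral representation} (a contraction / fixed-point argument) because the scalar-valued term $\Psi(\vect{x},\vect{y})$ depends on the unknown $\vect{x}$ itself; when $h\equiv 0$, that dependence disappears and the contraction constant $\epsilon(T)=(1+K)\sqrt{K}(\sup_k\delta_k)T$ vanishes, so the map collapses to the explicit functional $\Phi(\vect{y})_i = y_i - \min_j y_j$ that you write down directly. Your route is therefore a genuine simplification: you skip the stochastic-boundedness estimate, the martingale representation via $\hat{M}_i^n$, and the Banach fixed-point step entirely, and you correctly identify why these are unnecessary — without reneging, $\hat{R}^n$ is an explicit minimum of the input rather than the solution of a coupled equation. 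The remaining ingredients you invoke are standard and adequate: a Donsker FCLT for independent renewal counting processes (finite-variance interarrival times, as the paper implicitly assumes via the cited references for \eqref{moment condition on hat A}), joint convergence to a continuous Gaussian limit, Lipschitz continuity of $\Phi$ in the uniform norm, and the fact that $J_1$-convergence to a continuous limit reduces to uniform convergence so that the continuous mapping theorem applies. Your closing observations — $X_i\ge 0$, $\min_i X_i\equiv 0$ hence $\prod_j X_j\equiv 0$, and uniqueness being immediate since $\vect{X}$ is a closed-form functional of $\vect{W}$ — are all correct.

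One small point worth making explicit if you write this up fully: you need the interarrival distributions to have (asymptotically stable) finite second moments, and the scaled coefficients of variation to converge, for $\sigma_j$ to be well-defined in the limit. The paper's Assumption~2 controls only the rates $\lambda_i^n$, not the variances, so a standing hypothesis of this kind is implicit in the phrase ``renewal-type process'' and should be stated.
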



\begin{remark}\label{remark: compare with literature} 
    Our matching system without abandonment (see Proposition \ref{weak convergence theorem (non-abandonment)}) can be thought of as an equivalent asymptotic result as in \cite{plambeck2006optimal} and \cite{gurvich2015dynamic}. In \cite{plambeck2006optimal}'s construction, we may consider a single unlimited order in their shortage process (see their equations (10) and (11)). However, their shortage process would take values at infinity for all time, and it seems non-trivial to obtain an asymptotic result in their analysis. 
    If we consider a single product-production system in \cite{gurvich2015dynamic}, then the optimization problem becomes selecting the best time to formulate matches so as to minimize costs, whose solution would be an instantaneous match. 
    Considering the model proposed in Proposition \ref{weak convergence theorem (non-abandonment)} and following \cite{gurvich2015dynamic}'s construction, we may define the matching matrix $M = (1, \cdots, 1)^\intercal\in \mathbb{R}^K$. In particular, consider the $K=3$ matching system as an instance, where we have matrix $Y$ and the imbalance process $S(\cdot)$
    \[
    Y = \begin{pmatrix}
            1 & 0\\
            -1 & 1\\
            0 & -1
        \end{pmatrix}, 
    \quad 
    S(\cdot) = Y^\intercal (\vect{q_0} + \vect{A}(\cdot)), 
    \]
    where $\vect{q_0}$ denotes initial components in each queue, and $\vect{A}(t)$ represents the arrivals by time $t$, as defined in their equations (3.3) and (1.3), respectively. Meanwhile, 
    by their Theorem 1 and equation (4.5), the queue lengths can be represented as 
    \[
    Y^\intercal \vect{Q}(t) = \begin{pmatrix}
                        Q_1(t) - Q_2(t)\\
                        Q_2(t) - Q_3(t)
                    \end{pmatrix}, 
    \]
    such that $Q_{\text{min}}(t) = 0$ and $Y^\intercal \vect{Q}(t) = S(t)$ for $t\in[0, T]$, where $\vect{Q}\in D^K[0, T]$. In their proof of Theorem 1, we may further construct the matching process $R(t)$ for $t\in[0, T]$ such that $M R(t) = x(t)$, where $x(t) = \vect{q_0} + \vect{A}(t) - \vect{Q}(t) \geq 0$ and $Y^\intercal x(t) = 0$. Observe that our construction with scalar-valued matching completion can be formulated into theirs through the coupling phenomenon; however, the converse needs non-trivial work due to the lack of an explicit expression of $R(\cdot)$ using input processes. 
\end{remark}

The remainder of this section will be divided into four main parts: first, we exhibit the stochastic boundedness, which yields the non-explosive diffusion-scaled queue lengths; 
second, we establish the continuity of integral representation, whose coupling-type integral representation yields a unique yet applicable multivariate coupling relation; 
third, we exhibit the proof of the main result, Theorem \ref{weak convergence theorem}, for completion. Last but not least, 
we discuss a semimartingale decomposition of a special coupled equation obtained from Proposition \ref{weak convergence theorem (non-abandonment)}.

\subsection{Stochastic Boundedness}

Consider the diffusion-scaled processes $(\vect{\hat{Q}^n}, \vect{\hat{A}^n}, \vect{\hat{G}^n}, \hat{R}^n)$ satisfying \eqref{diffusion scaled queue length}. 
Since we know that $\hat{A}_i^n(t) - \hat{R}^n(t) + \frac{\lambda_i^n - \lambda_0 n}{\sqrt{n}}t$ is non-negative for $t\in[0, T]$ and for each $i\in\{1, \cdots, K\}$, $\hat{G}_i^n\geq0$ acts as a negative force that prevents the queue length from walking away and far from the origin. 
Further, the queue lengths are naturally non-negative because of the instantaneous matching behavior, and the stochastic boundedness ensures the non-explosive of queue lengths. 
The following results are basically proved along these facts. 

First, we introduce a new process $\hat{M}_i^n(\cdot)$ for $i\in\{1, \cdots, K\}$ by 
\begin{equation}
    \hat{M}_i^n(t) =\frac{1}{\sqrt{n}}\left(N_i\left(\delta_i^n\int_0^t Q_i^n(s)ds\right) - \delta_i^n\int_0^t Q_i^n(s)ds\right), 
    \label{hat M martingale}
\end{equation}
for all $t\geq0$, where $N_i$ is an unit rate Poisson process introduced in \eqref{abandonment process}. 
By \eqref{diffusion scaled queue length} and \eqref{hat M martingale}, we can rewrite the diffusion-scaled queue length process as
\begin{equation}
    \hat{Q}_i^n(t) = \hat{Q}_i^n(0) + \hat{A}_i^n(t) + \frac{\lambda_i^n - \lambda_0 n}{\sqrt{n}}t - \hat{M}_i^n(t) - \delta_i^n \int_0^t \hat{Q}_i^n(s)ds - \hat{R}^n(t), 
    \label{rewritten diffusion scaled queue length}
\end{equation}
for each $i\in\{1, \cdots, K\}$ and $t\in[0, T]$, where 
\begin{equation}
    \hat{R}^n(t) = \min_{1\leq k \leq N}\left\{\hat{Q}_k^n(0) + \hat{A}_k^n(t) + \frac{\lambda_k^n - \lambda_0 n}{\sqrt{n}}t - \hat{M}_k^n(t) - \delta_k^n \int_0^t \hat{Q}_k^n(s)ds\right\}. 
    \label{rewritten hat R}
\end{equation}
We will show that $\hat{M}_i^n$ is a martingale adapted to $(\mathcal{F}_t^n)_{t\geq0}$ filtration \eqref{natural filtration} in the following Lemma \ref{prove hat M is a martingale}. Therefore, \eqref{rewritten diffusion scaled queue length} yields a martingale representation of the diffusion-scaled queue length process. 

\begin{lemma}
    For any $i\in\{1, \cdots, K\}$, let the assumptions in Theorem \ref{weak convergence theorem} hold. Define $I_i^n := \{I_i^n(t):t\geq0\}$ by
    \begin{equation}
        I_i^n(t) = \delta_i^n \int_0^t Q_i^n(s)ds, 
        \label{random time change process}
    \end{equation}
    where $Q_i^n(\cdot)$ is defined in \eqref{queue length}. Define another filtration condition on the entire arrival processes $\bar{\mathcal{F}}^n = \{\bar{\mathcal{F}}_t^n: t\geq0\}$ by
    \begin{equation}
        \bar{\mathcal{F}}_t^n := \sigma\left(Q_i^n(0), \{A_i^n(u)\}_{u\geq0}, N_i(s): 0\leq s\leq t \text{ and } 1\leq i \leq K\right), \quad t\geq0. 
        \label{filtration for hat M}
    \end{equation}
    Then $(N_i\circ I_i^n)(t) - I_i^n(t)$ is a square-integrable martingale with respect to the filtration  $\bar{\mathcal{F}}_{I_i^n}^n$. Moreover, $\hat{M}_i^n$ is a square-integrable martingale with respect to the filtration $\mathcal{F}_t^n$ as defined in \eqref{natural filtration}, having quadratic variation processes
    \begin{equation}
        \langle\hat{M}_i^n\rangle(t) = \frac{\delta_i^n}{n}\int_0^t Q_i^n(s)ds \quad \text{and} \quad [\hat{M}_i^n, \hat{M}_i^n](t) = \frac{N_i\left(\delta_i^n\int_0^t Q_i^n(s)ds\right)}{n}. 
        \label{quadratic variation process}
    \end{equation}
    \label{prove hat M is a martingale}
\end{lemma}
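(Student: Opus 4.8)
The plan is to view the abandonment count $G_i^n = N_i \circ I_i^n$ as the unit–rate Poisson process $N_i$ observed under the random clock $I_i^n$, to invoke the optional sampling theorem for Poisson processes under a random time change (in the spirit of \cite{bremaud1981point} and \cite{pang2007martingale}), and finally to transport the resulting martingale property from an enlarged filtration back to $\mathcal{F}_t^n$. First I would record the pathwise bound $Q_i^n(s) \le Q_i^n(0) + A_i^n(s)$ for all $s$, valid because the terms $G_i^n(s)$ and $R^n(s)$ in \eqref{queue length} are nonnegative; integrating and taking expectations gives $E[I_i^n(t)] = \delta_i^n\int_0^t E[Q_i^n(s)]\,ds \le \delta_i^n\big(Q_i^n(0)\,t + \tfrac{1}{2}\lambda_i^n t^2\big) < \infty$ for every $t$, and the same elementary estimate controls $E[I_i^n(t)^2]$. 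Next I would verify that $I_i^n$ is a bona fide random time change relative to the enlarged filtration $\bar{\mathcal{F}}^n$ of \eqref{filtration for hat M}: on any compact interval the driving processes $A_j^n$ and $N_j$ have only finitely many jumps almost surely, so $Q_j^n$ restricted to $[0,t]$ — and hence $I_i^n(t)$ — may be constructed recursively across successive event epochs as a measurable functional of $\{A_j^n(u)\}_{u\ge 0}$ and of $\{N_j(v):v\le I_j^n(t)\}$; consequently $I_i^n$ is continuous, nondecreasing, $\bar{\mathcal{F}}^n$–adapted with $I_i^n(0)=0$, so each $I_i^n(t)$ is a finite $\bar{\mathcal{F}}_t^n$–stopping time and $t\mapsto I_i^n(t)$ increases. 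Making this adaptedness statement fully rigorous — disentangling the apparent circularity that $I_i^n$ is assembled from the very process $N_i$ whose clock it is — is the step I expect to be the main obstacle; the remaining arguments are essentially bookkeeping.

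Given that, independence of $A_j^n$ from $N_j$ and mutual independence of the $N_j$ imply that $N_i(\cdot)-(\cdot)$ and $\big(N_i(\cdot)-(\cdot)\big)^2-(\cdot)$ are $\bar{\mathcal{F}}_t^n$–martingales. Applying the optional sampling theorem along the increasing family $\{I_i^n(t)\}_{t\ge0}$ — its legitimacy following from the integrability of $I_i^n(t)$ and $N_i(I_i^n(t))$ via a routine truncation of $I_i^n(t)$ at level $m$ and passage $m\to\infty$ — shows that $(N_i\circ I_i^n)(t)-I_i^n(t)$ and $\big((N_i\circ I_i^n)(t)-I_i^n(t)\big)^2-I_i^n(t)$ are martingales for the time–changed filtration $\{\bar{\mathcal{F}}_{I_i^n(t)}^n\}_{t\ge0}$; in particular $E\big[\big((N_i\circ I_i^n)(t)-I_i^n(t)\big)^2\big]=E[I_i^n(t)]<\infty$, giving square integrability. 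To descend to $\mathcal{F}_t^n$, I would note that for $s\le t$ the random variables $A_i^n(s)$ and $G_i^n(s)=N_i(I_i^n(s))$ are $\bar{\mathcal{F}}_{I_i^n(t)}^n$–measurable (since $I_i^n(s)\le I_i^n(t)$ and, by \eqref{queue length} and \eqref{number of matches with G}, $I_i^n(s)$ is $\mathcal{F}_s^n$–measurable), whence $\mathcal{F}_t^n\subseteq\bar{\mathcal{F}}_{I_i^n(t)}^n$ and $\hat{M}_i^n(t)=n^{-1/2}\big(G_i^n(t)-I_i^n(t)\big)$ (recall \eqref{hat M martingale}) is $\mathcal{F}_t^n$–adapted; then for $s<t$ the tower property, conditioning first on $\bar{\mathcal{F}}_{I_i^n(s)}^n$ and using $\mathcal{F}_s^n\subseteq\bar{\mathcal{F}}_{I_i^n(s)}^n$, yields $E[\hat{M}_i^n(t)\mid\mathcal{F}_s^n]=\hat{M}_i^n(s)$, and square integrability carries over verbatim.

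For the quadratic variations, observe that $\hat{M}_i^n$ is pure–jump with every jump of size $n^{-1/2}$ occurring precisely at a jump of $N_i\circ I_i^n$, so $[\hat{M}_i^n,\hat{M}_i^n](t)=\sum_{s\le t}\big(\Delta\hat{M}_i^n(s)\big)^2=n^{-1}(N_i\circ I_i^n)(t)=n^{-1}N_i\big(\delta_i^n\int_0^t Q_i^n(s)\,ds\big)$, which is the second displayed identity in \eqref{quadratic variation process}. For the predictable quadratic variation, $I_i^n$ is continuous and $\bar{\mathcal{F}}^n$–adapted, hence a predictable compensator of the counting process $N_i\circ I_i^n$; since this compensator is continuous, $\langle N_i\circ I_i^n-I_i^n\rangle(t)=I_i^n(t)$ — equivalently, this is exactly the $I_i^n(t)$ subtracted in the second martingale obtained by optional sampling above — and rescaling by $n^{-1}$ gives $\langle\hat{M}_i^n\rangle(t)=n^{-1}\delta_i^n\int_0^t Q_i^n(s)\,ds$, as claimed.
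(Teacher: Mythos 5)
Your overall architecture matches the paper's, which invokes Lemma~3.2 of \cite{pang2007martingale} to package exactly what you carry out by hand: integrability of $I_i^n(t)$ and of $(N_i\circ I_i^n)(t)$ via the pathwise bound $Q_i^n(s)\le Q_i^n(0)+A_i^n(s)$, an optional-sampling step along the random time change, transfer of the martingale property down to $\mathcal{F}_t^n$, and the two quadratic-variation identities. Those computations are sound and in the same spirit as the paper.

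The genuine gap is precisely the step you flagged as unresolved, and the bridge you propose does not cross it. You pass from ``$I_i^n$ is continuous, nondecreasing, $\bar{\mathcal{F}}^n$-adapted with $I_i^n(0)=0$'' to ``each $I_i^n(t)$ is a finite $\bar{\mathcal{F}}_t^n$-stopping time.'' But $I_i^n$ and the filtration $\bar{\mathcal{F}}^n$ of \eqref{filtration for hat M} live on two different clocks --- the argument $t$ of $I_i^n$ is model time, while the index of $\bar{\mathcal{F}}^n$ runs along the internal clock of the unit-rate Poisson processes $N_j$ --- so ``$\bar{\mathcal{F}}^n$-adapted'' is not a well-posed property of $I_i^n$ as a process, and the stopping-time conclusion does not follow from it. What the cited lemma actually requires, and what the paper verifies, is that $\{I_i^n(t)<x\}\in\bar{\mathcal{F}}_x^n$ for every $x\ge0$. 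The mechanism is monotonicity rather than adaptedness: since $I_i^n$ is nondecreasing, on the event $\{I_i^n(t)<x\}$ one has $I_i^n(s)<x$ for all $s\le t$, so the abandonment path $G_i^n(s)=N_i(I_i^n(s))$, $s\le t$, is determined by $\{N_i(u):0\le u<x\}$; together with the full arrival paths (already in $\bar{\mathcal{F}}_0^n$ by the construction of \eqref{filtration for hat M}) and the deterministic $Q_i^n(0)$, this reconstructs $Q_i^n$ on $[0,t]$ and hence $I_i^n(t)$ from $\bar{\mathcal{F}}_x^n$-measurable data, giving the stopping-time property. Your recursive-event-epoch observation is aimed at this, but it is the localized form of the statement on $\{I_i^n(t)<x\}$ --- not a global adaptedness claim --- that dissolves the circularity you correctly identified as the main obstacle.
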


We prove this by verifying the conditions of Lemma 3.2 in \cite{pang2007martingale}. 
Hence, we defer its proof 
until the appendix. 

With the help of the martingale constructed from the scaled abandonment process, we obtain the moment bound result for the process $\hat{M}_i^n$ below, whose proof is deferred to the appendix, where we mainly apply the Burkholder's inequality (see Theorem 45 in Protter \cite{protter2005stochastic}). 

\begin{proposition}
    Let $T>0$ and for any $i\in\{1, \cdots, K\}$, we have
    \begin{equation}
        E\left[\|\hat{M}_i^n\|_T^2\right]\leq C_1(1+T^l), 
        \label{moment bound for hat M}
    \end{equation}
    where $C_1$ and $l\geq2$ are constants independent of $T$ and $n$. Consequently, the sequence $\{\hat{M}_i^n\}_{n\geq1}$ is stochastically bounded. 
    \label{SB for hat M}
\end{proposition}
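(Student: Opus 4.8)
The plan is to combine the square-integrable martingale structure of $\hat{M}_i^n$ supplied by Lemma \ref{prove hat M is a martingale} with an elementary pathwise upper bound on the unscaled queue length $Q_i^n$. By \eqref{quadratic variation process}, $\hat{M}_i^n$ has predictable quadratic variation $\langle \hat{M}_i^n\rangle(t) = (\delta_i^n/n)\int_0^t Q_i^n(s)\,ds$, and $\hat{M}_i^n(0)=0$, so $(\hat{M}_i^n)^2-\langle \hat{M}_i^n\rangle$ is a martingale. Applying Burkholder's inequality (Theorem 45 in \cite{protter2005stochastic}) with $p=2$ — or, equivalently, Doob's $L^2$ maximal inequality together with $E[(\hat{M}_i^n(T))^2]=E[\langle \hat{M}_i^n\rangle(T)]$ — yields
\[
E\big[\|\hat{M}_i^n\|_T^2\big]\;\le\; C\,E\big[\langle \hat{M}_i^n\rangle(T)\big]\;=\;\frac{C\,\delta_i^n}{n}\int_0^T E\big[Q_i^n(s)\big]\,ds ,
\]
where the last identity uses Tonelli and $Q_i^n\ge 0$, and $C$ is a universal constant. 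So it suffices to bound $E[Q_i^n(s)]$ uniformly in $n$.

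The key observation is that in the representation \eqref{queue length}, both $G_i^n(\cdot)$ and $R^n(\cdot)$ are non-negative (they are cumulative counts — of abandonments and of matches, respectively), so discarding them gives the pathwise domination $Q_i^n(t)\le Q_i^n(0)+A_i^n(t)$ for all $t\ge 0$. Since $A_i^n$ is Poisson with rate $\lambda_i^n$, taking expectations gives $E[Q_i^n(s)]\le Q_i^n(0)+\lambda_i^n s$, and hence
\[
E\big[\|\hat{M}_i^n\|_T^2\big]\;\le\; \frac{C\,\delta_i^n}{n}\Big(Q_i^n(0)\,T+\tfrac{1}{2}\,\lambda_i^n T^2\Big).
\]

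To finish, I would invoke the asymptotic assumptions: by \eqref{initial state limit} we have $Q_i^n(0)/n\to 0$, by \eqref{regime} we have $\lambda_i^n/n\to \lambda_0$, and $\delta_i^n\to\delta_i$ by \eqref{abandonment process}; therefore $\delta_i^n Q_i^n(0)/n$ and $\delta_i^n\lambda_i^n/n$ are convergent sequences, hence bounded uniformly in $n$ by finite constants $a$ and $b$. Using $aT+bT^2\le 2(a+b)(1+T^2)$ for all $T\ge 0$, we obtain $E[\|\hat{M}_i^n\|_T^2]\le C_1(1+T^2)$ with $C_1$ and $l=2$ independent of $n$ and $T$, which is the claimed bound. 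Stochastic boundedness of $\{\hat{M}_i^n\}_{n\ge 1}$ then follows from Markov's inequality: for any fixed $T$, $\sup_{n\ge 1}P(\|\hat{M}_i^n\|_T>M)\le C_1(1+T^2)/M^2\to 0$ as $M\to\infty$.

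The proposition is comparatively light, so I do not expect a serious obstacle; the one place where the argument could derail is trying to estimate $\|\hat{M}_i^n\|_T$ through the martingale representation \eqref{rewritten diffusion scaled queue length}, which contains $\hat{M}_i^n$ itself and would force a Gronwall-type bootstrap. The genuinely useful idea — and the thing that makes everything else routine — is to bypass that by the cruder pathwise bound $Q_i^n(t)\le Q_i^n(0)+A_i^n(t)$, which decouples the estimate completely.
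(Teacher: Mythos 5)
Your proof is correct and follows essentially the same route as the paper's: Burkholder/Doob reduces the bound to $E[\langle\hat{M}_i^n\rangle(T)]$, the crude pathwise inequality $Q_i^n\le Q_i^n(0)+A_i^n$ (which the paper also uses, inside the proof of Lemma~\ref{prove hat M is a martingale}) decouples the estimate, and the heavy-traffic scalings close it. The only cosmetic difference is that you integrate the mean $E[Q_i^n(s)]\le Q_i^n(0)+\lambda_i^ns$ directly, whereas the paper passes through the supremum $\|\hat A_i^n\|_T$ and the moment condition \eqref{moment condition on hat A}; both give the same polynomial bound in $T$.
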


Furthermore, the Proposition \ref{SB for hat M} and the martingale representation guarantee the stochastic boundedness of the scaled queue length processes in the following Proposition \ref{moment bound and stochastical boundedness of hat Q}, 
whose proof is delayed to the appendix. 
It is worth mentioning that we employ an interesting technique to manage moment bounds for the scalar-valued non-linear term, utilizing the summation of all entries to dominate the scaled matching completion process. 
This enables us to construct an integral inequality and then employ Gronwall's inequality. 
This technique is also employed in later discussions. 

\begin{proposition}
    Let $T>0$ and for any $i\in\{1, \cdots, K\}$, consider each entry of the state process vector $\vect{\hat{Q}^n}(\cdot)$ in $ D[0, T]$, we have
    \begin{equation}
        E\left[\|\hat{Q}_i^n\|_T^2\right]\leq K(1+K)^2 C_2(1+T^b) \cdot \exp{\left(2c_0(1+K) T \right)}, 
        \label{moment condition on hat Q}
    \end{equation}
    where $C_2$, $b\geq 2$, and $c_0$ are constants independent of $T$ and $n$. Consequently, the sequence $\{\hat{Q}_i^n\}_{n\geq1}$ is stochastically bounded. 
    \label{moment bound and stochastical boundedness of hat Q}
\end{proposition}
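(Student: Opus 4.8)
The plan is to derive from the martingale representation \eqref{rewritten diffusion scaled queue length} an integral inequality for a single scalar quantity that dominates all the coordinates simultaneously, and then close it with Gronwall's inequality, using Propositions \ref{SB for hat M} and \eqref{moment condition on hat A} to control the driving terms. The key structural observation is that the scalar-valued term $\hat{R}^n$ in \eqref{rewritten hat R} is a minimum over $k$, hence it is \emph{bounded above} by each individual bracketed expression and \emph{bounded below} by $-\sum_{k=1}^K\bigl|\hat{Q}_k^n(0)+\hat{A}_k^n(t)+(\lambda_k^n-\lambda_0 n)t/\sqrt{n}-\hat{M}_k^n(t)-\delta_k^n\int_0^t\hat{Q}_k^n(s)ds\bigr|$ (a minimum of reals is at least the negative of the sum of their absolute values). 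Since each $\hat{Q}_i^n$ is automatically non-negative and $\hat{G}_i^n \geq 0$, we can write from \eqref{rewritten diffusion scaled queue length} that $\hat{Q}_i^n(t) \leq \hat{Q}_i^n(0) + \hat{A}_i^n(t) + (\lambda_i^n-\lambda_0 n)t/\sqrt{n} - \hat{M}_i^n(t) - \hat{R}^n(t)$, so $0 \leq \hat{Q}_i^n(t) \leq |\text{input}_i^n(t)| + |\hat{R}^n(t)|$, and then summing over $i$ and substituting the lower bound for $\hat{R}^n$ we obtain
\begin{equation*}
  \sum_{i=1}^K \hat{Q}_i^n(t) \leq (1+K)\sum_{k=1}^K\left|\hat{Q}_k^n(0)+\hat{A}_k^n(t)+\frac{\lambda_k^n-\lambda_0 n}{\sqrt{n}}t - \hat{M}_k^n(t)\right| + (1+K)\sum_{k=1}^K \delta_k^n\int_0^t \hat{Q}_k^n(s)\,ds.
\end{equation*}

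First I would make this precise: set $\hat{S}^n(t) := \sum_{i=1}^K \hat{Q}_i^n(t) \geq 0$ and $\hat{U}^n(t) := (1+K)\sum_{k=1}^K\bigl|\hat{Q}_k^n(0)+\hat{A}_k^n(t)+(\lambda_k^n-\lambda_0 n)t/\sqrt{n}-\hat{M}_k^n(t)\bigr|$, and let $c_0 := \sup_{n,k}\delta_k^n < \infty$ (finite by Assumption on $\delta_k^n\to\delta_k$). The displayed inequality, applied at the running supremum, gives $\|\hat{S}^n\|_t \leq \|\hat{U}^n\|_T + c_0(1+K)\int_0^t \|\hat{S}^n\|_s\,ds$ for $t\in[0,T]$, since $\hat{Q}_k^n(s) \leq \hat{S}^n(s) \leq \|\hat{S}^n\|_s$. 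Gronwall's inequality then yields $\|\hat{S}^n\|_T \leq \|\hat{U}^n\|_T \exp(c_0(1+K)T)$, and since $\hat{Q}_i^n \leq \hat{S}^n$ pointwise this controls each coordinate. Squaring, using $(\sum a_k)^2 \leq K\sum a_k^2$, and taking expectations, the moment bound reduces to bounding $E[\|\hat{U}^n\|_T^2]$. That in turn follows from $E[\|\hat{A}_k^n\|_T^2] \leq C_0(1+T^m)$ from \eqref{moment condition on hat A}, $E[\|\hat{M}_k^n\|_T^2]\leq C_1(1+T^l)$ from Proposition \ref{SB for hat M}, the convergence \eqref{initial state limit} which makes $\hat{Q}_k^n(0)$ bounded, and the fact that $|\lambda_k^n-\lambda_0 n|/\sqrt{n} \to |\beta_k|$ by \eqref{regime} so that term is $O(T)$; collecting powers of $T$ gives the stated form $K(1+K)^2 C_2(1+T^b)\exp(2c_0(1+K)T)$ with $b=\max\{m,l\}\geq 2$. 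Stochastic boundedness of $\{\hat{Q}_i^n\}_n$ is then immediate from the $n$-uniform second-moment bound via Markov's inequality.

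The main obstacle is the circularity: the bound on $\hat{R}^n$ involves $\int_0^t \hat{Q}_k^n(s)ds$ for all $k$, so one cannot bound $\hat{R}^n$ before bounding the queue lengths. The resolution, and the technical heart of the argument (flagged in the paragraph preceding the proposition as the ``interesting technique''), is precisely to bound the \emph{aggregate} $\hat{S}^n$ rather than the individual coordinates: the aggregate appears on both sides in a form amenable to Gronwall, because the minimum $\hat{R}^n$ contributes at most the sum of $K$ integral terms each dominated by $\int_0^t \|\hat{S}^n\|_s\,ds$. A secondary point requiring care is the measurability/integrability needed to pass the supremum inside and to apply Gronwall to the deterministic function $t\mapsto\|\hat{S}^n\|_t$ (one should first verify $\|\hat{S}^n\|_T$ is a.s.\ finite, e.g.\ from $\hat{S}^n(t) \leq \hat{S}^n(0) + \sum_k\hat{A}_k^n(t) + \sum_k(\lambda_k^n-\lambda_0 n)t/\sqrt{n}$ using $\hat{G}_i^n,\hat{R}^n+\text{(drift)}\geq$ appropriate signs, or simply from the pathwise RCLL structure), after which all steps are routine.
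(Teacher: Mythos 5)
Your proposal is correct and follows essentially the same route as the paper: both proofs aggregate the coordinates into the scalar $\sum_k |\hat{Q}_k^n(t)|$ (your $\hat{S}^n$), dominate the scalar-valued minimum $\hat{R}^n$ by the sum of absolute values of the $K$ bracketed input terms, and close the resulting integral inequality with Gronwall, then square and invoke \eqref{moment condition on hat A} and Proposition \ref{SB for hat M} to bound the driving random variable. The only cosmetic differences are that the paper packages the driver as $B_T^n$ (with the supremum over $[0,T]$ built in) before writing the integral inequality, whereas you take the running supremum after; and you explicitly observe that only the \emph{lower} bound on $\hat{R}^n$ is needed (since $-\hat{R}^n$ enters the queue-length expression), while the paper bounds $|\hat{R}^n|$. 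Neither difference is substantive.
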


\subsection{Continuity of the Integral Representation}

In this section, we intend to establish the continuity of an integral representation, which involves a scalar-valued minimum-type non-linear term. 
This move is analogous to \cite{pang2007martingale}. 
However, the centered and scaled matching completion $\hat{R}^n$ defined in \eqref{diffusion scales} distinguishes the martingale representation from those results in \cite{pang2007martingale}. 
Our major contribution to this section is Theorem \ref{continuity of integral representation} below, where the non-trivial integral representation is a coupled multivariate integral equation with a scalar-valued non-linear term. 


\begin{theorem}
    Consider the integral representation for $t\geq0$,
    \begin{equation}
        \vect{x}(t) = \vect{y}(t) - \int_0^t h(\vect{x}(s))ds - R(t)\vect{I},
        \label{integral representation}
    \end{equation}
    where $\vect{x}$, $\vect{y}\in D^K[0, T]$, $\vect{I} = (1, \cdots, 1)^\intercal\in\mathbb{R}^K$, $h: D^K[0, T]\to D^K[0, T]$ satisfies the Lipschitz condition, and the function $R(\cdot)$ is given by $R(\cdot):= \Psi(\vect{x}, \vect{y})(\cdot)$, where
    \begin{equation}
        \Psi(\vect{x}, \vect{y})(t) = \min_{1\leq j \leq K} \left\{y_j(t) - \int_0^t h_j(\vect{x}(s))ds\right\}. 
        \label{definition of R functional version}
    \end{equation}
    Then, it has a unique solution $\vect{x}$ such that the integral representation constitutes a function $f: D^K[0, T]\to  D^K[0, T]$ mapping $\vect{y}$ into $\vect{x} := f(\vect{y})$. Moreover, $f$ is continuous, provided that the function space $ D^K[0, T]$ is endowed with the topology of uniform convergence over bounded intervals. 
    \label{continuity of integral representation}
\end{theorem}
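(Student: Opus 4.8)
The plan is to recast \eqref{integral representation} as a fixed-point equation for $\vect{x}$ alone by substituting the definition \eqref{definition of R functional version} of $R$ into \eqref{integral representation}, and then to run a Picard-type contraction argument in the uniform norm $\|\cdot\|_T$, from which existence, uniqueness, and Lipschitz continuity of $f$ all follow. Concretely, define the operator $\Phi$ on $D^K[0,T]$ by
\begin{equation*}
    \Phi(\vect{x})_i(t) = y_i(t) - \int_0^t h_i(\vect{x})(s)\,ds - \min_{1\leq j\leq K}\left\{y_j(t) - \int_0^t h_j(\vect{x})(s)\,ds\right\},
\end{equation*}
so that a solution of \eqref{integral representation} is exactly a fixed point of $\Phi$ (with $R=\Psi(\vect{x},\vect{y})$ read off afterwards). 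One first checks that $\Phi$ maps $D^K[0,T]$ into itself: $\vect{y}\in D^K[0,T]$, the integral term is continuous, and the pointwise minimum of RCLL functions is RCLL. Note also that $\Phi(\vect{x})_i(t)\geq 0$ and $\min_i\Phi(\vect{x})_i(t)=0$, so at a fixed point the structural constraint $\prod_{j}x_j(\cdot)=0$ is automatic.

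The core estimate uses the elementary Lipschitz property of the minimum, $\bigl|\min_j a_j-\min_j b_j\bigr|\leq\max_j|a_j-b_j|$, together with the (causal) Lipschitz hypothesis on $h$, say with constant $L$, applied on each subinterval $[0,t]$. For $\vect{x},\vect{x}'\in D^K[0,T]$ this yields, for every $t\in[0,T]$,
\begin{equation*}
    \|\Phi(\vect{x})(t)-\Phi(\vect{x}')(t)\| \;\leq\; C\int_0^t\|h(\vect{x})(s)-h(\vect{x}')(s)\|\,ds \;\leq\; CL\int_0^t\|\vect{x}-\vect{x}'\|_s\,ds,
\end{equation*}
where $C=C(K)$ absorbs the constants produced by the min-subtraction and the choice of norms. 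Iterating this bound in the standard Picard fashion gives $\|\Phi^{m}(\vect{x})-\Phi^{m}(\vect{x}')\|_T\leq \frac{(CLT)^{m}}{m!}\|\vect{x}-\vect{x}'\|_T$, so some power $\Phi^{m}$ is a strict contraction on the Banach space $(D^K[0,T],\|\cdot\|_T)$. By the Banach fixed-point theorem $\Phi$ has a unique fixed point $\vect{x}=:f(\vect{y})$, which establishes both existence and uniqueness of the solution of \eqref{integral representation} and defines the map $f$.

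For continuity I would compare the fixed points $\vect{x}=f(\vect{y})$ and $\vect{x}'=f(\vect{y}')$ directly in \eqref{integral representation}: subtracting the two identities and bounding the difference of the $R$-terms again through $\bigl|\min_j a_j-\min_j b_j\bigr|\leq\max_j|a_j-b_j|$ gives
\begin{equation*}
    \|\vect{x}-\vect{x}'\|_t \;\leq\; C_1\|\vect{y}-\vect{y}'\|_t + C_2L\int_0^t\|\vect{x}-\vect{x}'\|_s\,ds, \qquad t\in[0,T],
\end{equation*}
and Gronwall's inequality then yields $\|f(\vect{y})-f(\vect{y}')\|_T\leq C_1 e^{C_2LT}\|\vect{y}-\vect{y}'\|_T$; hence $f$ is in fact globally Lipschitz on $D^K[0,T]$ endowed with the topology of uniform convergence on bounded intervals, which gives the asserted continuity. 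I expect the main obstacle to be conceptual rather than computational: because $R$ in \eqref{integral representation} is defined implicitly through the very solution $\vect{x}$ it helps determine, the equation is not an explicit integral equation, and the whole argument hinges on first eliminating $R$ to obtain the self-contained map $\Phi$ and then controlling the extra scalar-valued coupling term uniformly via the Lipschitz-minimum inequality — keeping the $K$-dependent constants explicit enough that the Picard and Gronwall estimates still close is the only delicate bookkeeping.
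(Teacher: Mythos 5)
Your proof is correct and follows the same high-level strategy as the paper (recast as a self-contained fixed-point equation by substituting the definition of $R$, establish a contraction, invoke Banach), but several technical choices are genuinely different and in one case meaningfully stronger. Where you invoke the clean inequality $\lvert \min_j a_j - \min_j b_j\rvert \le \max_j \lvert a_j - b_j\rvert$ to control the scalar-valued coupling term, the paper instead chases the index attaining the minimum in one of the two $R$'s and then crudely sums over all indices; the two devices give the same kind of estimate, but yours is shorter and produces the bound in one line. For the contraction itself, you iterate and use the $\frac{(CLT)^m}{m!}$ factorial gain so that some power of $\Phi$ is a strict contraction on all of $[0,T]$ at once, while the paper picks a small $T_1$ with $\epsilon(T_1)<1$ and pastes subintervals; again both are standard and equivalent. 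The most substantive divergence is in the continuity of $f$: you compare the two fixed points $f(\vect{y})$ and $f(\vect{y}')$ directly, use the min-subtraction inequality, and close with Gronwall to obtain the uniform Lipschitz bound $\|f(\vect{y})-f(\vect{y}')\|_T\le C_1 e^{C_2 L T}\|\vect{y}-\vect{y}'\|_T$, which is exactly the assertion of the theorem. The paper's continuity step is instead phrased in terms of $\|f(\vect{y}(t_n))-f(\vect{y}(t))\|$ with $t_n\to t$, which (as written) addresses temporal regularity of a single solution rather than continuity of the map $\vect{y}\mapsto f(\vect{y})$ between function spaces, and it appeals to an unproved boundedness of $\vect{x}$; your Gronwall argument avoids both issues and actually delivers the stated claim. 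You also keep $h$ a general Lipschitz map, whereas the paper specializes to $h(\vect{x})=(\delta_1 x_1,\ldots,\delta_K x_K)^\intercal$ ``for brevity.'' In short: same skeleton, cleaner bones, and a properly closed continuity step.
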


We prove this result by formulating a contraction mapping. 
Here, our proof is novel in the sense that when establishing the contraction, we tackle the difference between two minimum-type scalar-valued terms by considering indices (depending on time) that attain the minimum in \eqref{definition of R functional version} and further using the summation of all entries as upper bounds to dominate the difference. 
It is also worth mentioning that the proof of Theorem \ref{continuity of integral representation} is mainly in a space endowed with the topology of uniform convergence over bounded intervals since the limiting processes in our discussions have continuous sample paths. 
However, a space endowed with Skorokhod $J_1$ topology works better than uniform topology in general cases since the latter may bring in measurability issues (see Section 11.5.3 in \cite{whitt2002stochastic}). 
A more detailed proof regarding the continuity in the Skorokhod space $ D^K[0, T]$ endowed with the Skorokhod $J_1$ topology can be found in 
Section 3.8 of \cite{xie2022topics}. 

\subsection{Proof of Theorem \ref{weak convergence theorem}}

\begin{proof}[Proof of Theorem \ref{weak convergence theorem}]
The proof is mainly divided into two parts: first, we intend to show the coupled stochastic integral equation \eqref{limiting process (model with abandonment)} admits a unique strong solution; second, we will show that $\vect{\hat{Q}^n}$ is convergent weakly and furthermore, the limiting process satisfies \eqref{limiting process (model with abandonment)}. 

Consider a functional $\Lambda:  C^K[0, T]\to C^K[0, T]$ given by
\begin{equation}
    \Lambda(\vect{Y})(t) = \vect{X}(0) + 
    \vect{\beta} t + 
    \Sigma \vect{W}(t) -
    \int_0^t \vect{\delta} \diamond \vect{X}(s)ds - 
    R_{\vect{Y}}(t)\vect{I},
\end{equation}
where $\vect{Y}(\cdot)=(Y_1(\cdot),\cdots Y_K(\cdot))^\intercal\in C^K[0, T]$ with $\vect{Y}(0) = \vect{X}(0)$, parameters are as defined in \eqref{limiting process (model with abandonment)}, and the process $R_{\vect{Y}}(\cdot)$ is given by
\begin{equation}
    R_{\vect{Y}}(t) = \min_{1\leq k\leq K}\left\{x_k +\beta_kt + \sigma_kW_k(t) - \int_0^t \delta_k Y_k(s)ds\right\},
\end{equation}
for $t\geq0$. To show the existence and uniqueness of a strong solution to \eqref{limiting process (model with abandonment)}, it suffices to show the functional $\Lambda$ admits a unique fixed point. 
Following the main idea of proof mentioned above in Theorem \ref{continuity of integral representation}, we can obtain a Lipschitz property that for any $\vect{Y}^{(1)}, \vect{Y}^{(2)}\in C^K[0, T]$ such that $\vect{Y}^{(1)}(0) = \vect{Y}^{(2)}(0) = \vect{X}(0)$, 
\begin{equation}
    \|\Lambda(\vect{Y}^{(1)})- \Lambda(\vect{Y}^{(2)})\|_T \leq \epsilon(T) \|\vect{Y}^{(1)} - \vect{Y}^{(2)}\|_T, 
\end{equation}
where $\epsilon(T) = (1+K)\sqrt{K}\left(\sup_{1\leq k \leq K} (\delta_k)\right)T$. 
We partition the time interval $[0, T]$ into several subintervals with length $T_1>0$ such that $\epsilon(T_1)<1$ to formulate successive contraction mappings over each subinterval (see also, proof of Theorem \ref{continuity of integral representation}). 
Hence, we can conclude that $\Lambda$ is a contraction mapping by taking suitable $T$ consecutively. 
By the Banach fixed-point theorem, $\Lambda$ admits a unique fixed point, which suggests that there exists a unique strong solution $\vect{X}$ to \eqref{limiting process (model with abandonment)}. 
Additionally, the existence and uniqueness of a weak solution to the general state-dependent coupled stochastic integral equation and its strong Markov property can be found in Section 3.5 of \cite{xie2022topics}.

Next, we intend to show that $\vect{\hat{Q}^n}$ is convergent weakly, and the heavy traffic limiting process satisfies \eqref{limiting process (model with abandonment)}. 
By the martingale representation of the diffusion-scaled queue length \eqref{rewritten diffusion scaled queue length}, we define a new functional $F: D^K[0, T]\to D^K[0, T]$ such that 
\begin{equation}
    \vect{\hat{Q}^n}(t) = F(\vect{\xi}^n)(t), 
\end{equation}
where $\vect{\xi^n}(\cdot) = (\xi_1^n(\cdot), \cdots, \xi_K^n(\cdot))^\intercal$ with entries defined by 
\begin{equation}
    \xi_k^n(t) := \hat{Q}_k^n(0) + \frac{\lambda_k^n - \lambda_0 n}{\sqrt{n}}t + \hat{A}_k^n(t) - \hat{M}_k^n(t)
\end{equation}
for $t\geq0$ and $k\in\{1, \cdots, K\}$.
Since $\vect{\xi^n}\Rightarrow\vect{\xi}$ in $ D^K[0, T]$, where $\vect{\xi}(\cdot)=(\xi_1(\cdot), \cdots, \xi_K(\cdot))^\intercal$ with entries given by $\xi_k(t) = x_k + \beta_k t + \sigma_k W_k(t)$ for $t\geq0$, and using the continuous mapping theorem and Theorem \ref{continuity of integral representation}, we have
\begin{equation}
    F(\vect{\xi^n}) \Rightarrow F(\vect{\xi}) := \vect{Q}
\end{equation}
in $D^K[0, T]$ as $n\to\infty$. By the existence of a unique solution to \eqref{limiting process (model with abandonment)}, we obtain that $\vect{Q}(\cdot)$ coincides with $\vect{X}(\cdot)$. This completes the proof. 
\end{proof}


We can further extend the weak convergence result in Theorem \ref{weak convergence theorem} to the convergence in $L^p$ sense for some appropriate $p$ values on a special probability space with the help of the Skorokhod device. 
This works well due to the fact that we are considering a Markovian model. 
We present such an extension in Corollary \ref{L^p extension} below and postpone its proof to the appendix, which mainly relies on verifying the uniform integrability of a proper integrand and employing Burkholder's inequality. Meanwhile, we carefully exhibit an upper bound of the scalar-valued non-linear term, which is analogous to the technique used in Theorem \ref{continuity of integral representation}. 

\begin{corollary}
    The weak convergence in Theorem \ref{weak convergence theorem} can be refined on a special probability space under which we have for $p\geq 1$, 
    \begin{equation}
        E\left[\|\vect{\hat{Q}^n} - \vect{X}\|_T^p\right]\to 0
    \end{equation}
    as $n\to\infty$. 
    \label{L^p extension}
\end{corollary}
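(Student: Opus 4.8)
The plan is to upgrade the weak convergence of Theorem \ref{weak convergence theorem} to $L^p$ convergence by passing to a common probability space via the Skorokhod representation theorem and then exploiting the Lipschitz structure of the map $F$ together with the moment bounds already established. First I would invoke the Skorokhod representation theorem: since $\vect{\xi^n}\Rightarrow\vect{\xi}$ in $D^K[0,T]$ (which underlies the proof of Theorem \ref{weak convergence theorem}), there is a probability space carrying versions (which I will not rename) such that $\vect{\xi^n}\to\vect{\xi}$ almost surely in the $J_1$ topology, and since $\vect{\xi}$ has continuous sample paths this convergence is in fact uniform on $[0,T]$, i.e. $\|\vect{\xi^n}-\vect{\xi}\|_T\to 0$ a.s. Because $\vect{\hat Q^n}=F(\vect{\xi^n})$ and $\vect{X}=F(\vect{\xi})$ for the continuous map $F$ of Theorem \ref{continuity of integral representation}, the continuous mapping theorem already gives $\|\vect{\hat Q^n}-\vect{X}\|_T\to 0$ a.s.; the task is therefore to promote this to $L^p$.

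The next step is to establish uniform integrability of $\{\|\vect{\hat Q^n}-\vect{X}\|_T^p\}_{n\geq1}$, for which it suffices to show the family $\{\|\vect{\hat Q^n}\|_T^{p+\varepsilon}\}_n$ is bounded for some $\varepsilon>0$ (the limit $\vect{X}$ having finite moments of all orders by the contraction/Gronwall estimates). I would obtain this by the same device used to prove Proposition \ref{moment bound and stochastical boundedness of hat Q}: starting from the martingale representation \eqref{rewritten diffusion scaled queue length}, bound the scalar-valued term $\hat R^n$ by noting that, since every $\hat Q_i^n\geq0$ and at least one entry vanishes, $|\hat R^n(t)|$ is dominated by $\sum_{k}|\hat Q_k^n(0)+\tfrac{\lambda_k^n-\lambda_0 n}{\sqrt n}t+\hat A_k^n(t)-\hat M_k^n(t)|$ plus the integral terms; this yields a pathwise integral inequality for $\sup_{s\le t}\|\vect{\hat Q^n}(s)\|$, and after taking $p+\varepsilon$-th moments, applying Burkholder's inequality to control $E[\|\hat M_i^n\|_t^{p+\varepsilon}]$ in terms of the quadratic variation \eqref{quadratic variation process} (which is itself controlled by $\int_0^t E[\|\vect{\hat Q^n}(s)\|^{?}]ds$, the exponent reduced using that $\delta_i^n/n\to0$), and using the $(p+\varepsilon)$-moment analogue of \eqref{moment condition on hat A}, Gronwall's inequality closes the estimate with a bound uniform in $n$. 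Combining almost sure convergence with uniform integrability gives $E[\|\vect{\hat Q^n}-\vect{X}\|_T^p]\to0$, which is the claim.

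I expect the main obstacle to be the moment bookkeeping for the abandonment martingale term: the quadratic variation of $\hat M_i^n$ involves $\int_0^t Q_i^n(s)\,ds$, so a $q$-th moment of $\|\hat M_i^n\|_T$ brings in roughly a $q/2$-th moment of $\sup_s \hat Q_i^n(s)$ after rescaling, and one must be careful that the nonlinearity does not force an infinite regress of ever-higher moments. The saving grace is exactly the factor $1/n$ in $\langle\hat M_i^n\rangle$ from \eqref{quadratic variation process}, which makes the martingale contribution asymptotically negligible relative to the drift term $\delta_i^n\int_0^t \hat Q_i^n(s)\,ds$, so that after Burkholder one still arrives at a genuine linear integral inequality in $t\mapsto E[\sup_{s\le t}\|\vect{\hat Q^n}(s)\|^{p+\varepsilon}]$ to which Gronwall applies; making this precise, uniformly in $n$, is the delicate point. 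The dependence of the constant $l$ (resp. $b$) in \eqref{moment bound for hat M} (resp. \eqref{moment condition on hat Q}) on $p$ should be tracked but is otherwise routine. Since all of this is by now a standard elaboration of the arguments already invoked for Propositions \ref{SB for hat M} and \ref{moment bound and stochastical boundedness of hat Q}, I would relegate the detailed computation to the appendix as the statement of Corollary \ref{L^p extension} indicates.
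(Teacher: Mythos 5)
Your approach is correct and arrives at the same conclusion, but it takes a different (and slightly more roundabout) route than the paper. The paper exploits the quantitative Lipschitz/Gronwall estimate
\begin{equation*}
\|\vect{\hat{Q}^n} - \vect{X}\|_T \leq (1+K)\sqrt{K}\, e^{(1+K)\sqrt{K}C_0 T}\,\|\vect{\xi^n}-\vect{\xi}\|_T,
\end{equation*}
which follows from the same contraction argument as in Theorem \ref{continuity of integral representation}, and this reduces the $L^p$ convergence of the output to that of the \emph{input} difference $\vect{\xi^n}-\vect{\xi}$. The uniform-integrability burden then falls on $\|\vect{\xi^n}-\vect{\xi}\|_T^p$, whose moments are controlled directly via Burkholder's inequality applied to the compensated Poisson arrivals and to $\hat{M}_j^n$, the quadratic variation of the latter being tamed by the crude pathwise bound $Q_j^n \le Q_j^n(0)+A_j^n$. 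This last step is precisely what dissolves the ``infinite regress'' concern you flag: the QV of $\hat{M}_j^n$ is dominated by a functional of the arrival process alone, with no feedback from the queue length. Your version, by contrast, takes the almost-sure convergence $\|\vect{\hat{Q}^n}-\vect{X}\|_T\to0$ from the continuous mapping theorem and then establishes uniform integrability of $\|\vect{\hat Q^n}\|_T^{p+\varepsilon}$ head-on, by iterating the argument of Proposition \ref{moment bound and stochastical boundedness of hat Q} to higher orders and invoking finite moments of $\vect{X}$. That is a valid route --- and indeed the paper carries out exactly this higher-order extension for $\|\vect{\hat Q^n}\|_T$ in \eqref{higher order moment bound for hat Q} when it needs it for the cost-functional convergence --- but for the present corollary it is unnecessary; the Gronwall reduction to $\vect{\xi^n}$ is the cleaner shortcut. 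One minor inaccuracy in your heuristic: the mechanism that keeps the martingale term from forcing higher and higher queue-length moments is not ``$\delta_i^n/n\to0$'' reducing the exponent; rather, the compensator $\langle\hat{M}_i^n\rangle(t)=\frac{\delta_i^n}{n}\int_0^t Q_i^n(s)\,ds$ is bounded via $Q_i^n\le Q_i^n(0)+A_i^n$ by a quantity involving arrivals only, which has all moments regardless of the queue dynamics.
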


\subsection{Semimartingale Property}
\label{sec: Semimartingale Property of the Coupled Process}

The characteristics of the coupled stochastic integral equations are intriguing. 
Intuitively, when a queue becomes empty after a match, it may remain empty for a certain time due to the fullness of other queues since matching is instantaneous. This queue may stick to zero for a certain time. 
Moreover, when more than one queue is empty, the above situation could be transferred to other queues. Such a stickiness behavior also persists in the heavy-traffic limiting processes. 
We close this section by
revealing the semimartingale property of a special coupled stochastic integral equation \eqref{limiting process (non-abandonment)}, which reveals that there are some underlying local time processes from the scalar-valued process $R$. 
However, the semimartingale property of the more general coupled equation \eqref{limiting process (model with abandonment)} is much more complex, and we will discuss it in further studies. 

\begin{proposition}
    The heavy traffic limit $\vect{X}(t) = (X_1(t), \cdots, X_K(t))^\intercal$ obtained in \eqref{limiting process (non-abandonment)} is a semimartingale for $t\geq 0$.
    \label{Theorem semimartingale property}
\end{proposition}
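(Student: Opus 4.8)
The plan is to exhibit an explicit semimartingale decomposition of each coordinate $X_i$ by identifying the scalar process $R$ in \eqref{R(t) (non-abandonment)} as a minimum of continuous semimartingales and then applying the Tanaka--It\^o formula to the pairwise differences. First I would introduce, for $1\le j\le K$, the continuous semimartingales $Z_j(t) := X_j(0)+\beta_j t + \sigma_j W_j(t)$, so that $R(t) = \min_{1\le j\le K} Z_j(t)$ and $X_i(t) = Z_i(t) - R(t) = Z_i(t) - \min_{1\le j\le K} Z_j(t) \ge 0$. Since each $Z_j$ is a semimartingale, it suffices to show that $\min_{1\le j\le K} Z_j$ is a semimartingale; this follows by induction on $K$ from the binary identity $a\wedge b = \tfrac12(a+b) - \tfrac12|a-b|$ together with Tanaka's formula, which gives $|Z_a - Z_b|$ as a semimartingale with a local-time term $L^0_t(Z_a - Z_b)$ at zero. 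Consequently $R$, and hence each $X_i = Z_i - R$, is a continuous semimartingale.

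To make the decomposition explicit I would unwind the nested minima. Writing $R = Z_1 \wedge Z_2 \wedge \cdots \wedge Z_K$ and applying the binary formula repeatedly, one obtains $R(t)$ as an affine combination of the $Z_j(t)$ and of absolute-value terms $|Z_{a}^{(m)} - Z_{b}^{(m)}|$ arising at each stage of the recursion, each of which Tanaka's formula expresses through an integral against $\mathrm{sgn}(\cdot)$ plus a local time at $0$. Collecting terms, $R(t) = R(0) + \int_0^t \varphi(s)\,ds + \int_0^t \psi(s)^\intercal\,d\vect W(s) + \tfrac12\sum_{\text{pairs}} c_{ab}\,L^0_t(Z_a - Z_b)$ for suitable bounded predictable $\varphi$, $\psi$ (piecewise selecting the index attaining the running minimum) and constants $c_{ab}$; substituting into $X_i = Z_i - R$ then yields the stated semimartingale decomposition, with the local times of the differences $X_a - X_b = Z_a - Z_b$ acting as the regulators that keep the coordinates nonnegative (note $X_a - X_b = Z_a - Z_b$ since the common $R$ cancels, so $L^0(Z_a-Z_b) = L^0(X_a - X_b)$). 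I would also record that on the event where a single index $j^\ast(s)$ strictly attains the minimum, $dR(s) = dZ_{j^\ast(s)}(s)$ and hence $dX_{j^\ast(s)}(s) = 0$, which is the analytic manifestation of the stickiness: the coordinate that is at zero does not move until another coordinate ties it for the minimum.

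The main obstacle I anticipate is bookkeeping rather than conceptual: when more than two of the $Z_j$ coincide at the running minimum the naive pairwise decomposition double-counts local time contributions, so one must verify that the combination of Tanaka terms produced by the recursive $a\wedge b$ expansion is consistent and independent of the order in which the minima are taken (equivalently, that the occupation-time/local-time terms at the higher-order coincidence sets are Lebesgue-null in $t$ and therefore do not affect the decomposition). A clean way around this is to avoid the explicit combinatorics altogether and simply invoke the general fact that a finite minimum (or maximum) of continuous semimartingales is a continuous semimartingale --- which is all that Proposition \ref{Theorem semimartingale property} as stated requires --- and relegate the fully explicit local-time decomposition to a remark. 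I would therefore structure the proof to first deliver the bare semimartingale claim via the binary min identity and induction, and then, if desired, add the explicit form with the caveat above; either way the key input is Tanaka's formula applied to the differences $Z_a - Z_b$, which are honest continuous semimartingales because the $Z_j$ are.
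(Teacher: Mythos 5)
Your approach is essentially the same as the paper's: iterated Tanaka applied to a nested $\min$/$\max$ decomposition of the scalar process $R$. You use the symmetric identity $a\wedge b = \tfrac12(a+b)-\tfrac12|a-b|$ with the $|x|$-form of Tanaka, whereas the paper unwinds $\max$'s via $\max\{a,b\}=b+(a-b)^+$ with the $(x)^+$-form, but these are cosmetic variants of the same idea. One genuine improvement in your write-up is the clean abstraction you point to: a finite minimum of continuous semimartingales is a semimartingale, which is all the proposition strictly needs and dodges the combinatorics of the explicit decomposition.

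One point of caution in your explicit decomposition, which you partially anticipate but which is worth stating precisely: for $K\ge 3$ the local-time terms are \emph{not} local times at $0$ of the pairwise differences $Z_a - Z_b$. When you iterate $(Z_1\wedge Z_2)\wedge Z_3$, Tanaka produces the local time at $0$ of the compound process $(Z_1\wedge Z_2)-Z_3$, which accumulates on $\{\min(Z_1,Z_2)=Z_3\}$ and is not, without further argument, expressible as a combination of $L^0(Z_1-Z_3)$ and $L^0(Z_2-Z_3)$. The paper's decomposition \eqref{semimartingale property (non-abandonment)} keeps track of this correctly via the iterated processes $Y_l$ (each $Y_l$ for $l<K-1$ already contains positive-part terms from the previous stage, so $L_t^{(l)}$ is the local time of a compound object, not a pairwise difference). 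Your sentence identifying the local times with $L^0(Z_a-Z_b)=L^0(X_a-X_b)$ should therefore be either dropped, restricted to $K=2$, or replaced with a reference to the iterated structure. Your fallback — prove only the bare semimartingale claim by induction from the binary $\min$ identity, and relegate the explicit formula — is exactly the right way to keep the proof clean; with that adjustment the argument is correct.
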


We postpone its proof to the appendix. Its derivation of the semimartingale decomposition is novel and applicable since the technique of successively employing Tanaka's formula occurs when tackling a maximum of many entries.
For brevity, we exhibit the decomposition for $X_1$ in the case of $K\geq 2$ categories as an example, where we have 
\begin{equation}
\begin{aligned}
    X_1(t) &= X_1(0) - X_K(0) + \sum_{l=1}^{K-1} (Y_l(0))^+ - \sqrt{\sigma_1^2+\sigma_K^2}B_{1K}(t) + (\beta_1-\beta_K)t\\
    &\quad +\sum_{l=1}^{K-1}\sqrt{\sigma_l^2 + \sigma_K^2}\int_0^t \mathbbm{1}_{[Y_l(s)>0]} \prod_{j=1}^{l-1}\left(1-\mathbbm{1}_{[Y_j(s)>0]}\right) dB_{lK}(s)\\
    &\quad +\sum_{l=1}^{K-1} (\beta_K-\beta_l)\int_0^t \mathbbm{1}_{[Y_l(s)>0]} \prod_{j=1}^{l-1} \left(1-\mathbbm{1}_{[Y_j(s)>0]}\right)ds \\
    &\quad + \frac{1}{2}\sum_{l=1}^{K-1} \int_0^t \prod_{j=1}^{(K-1)-l} \left(1-\mathbbm{1}_{[Y_j(s)>0]}\right)dL_s^{(l)}, 
    \label{semimartingale property (non-abandonment)}
\end{aligned}
\end{equation}
for $t\geq 0$, where $\{Y_l(t)\}_{1\leq l\leq K-1}$ are defined as the following iterations: 
\begin{align*}
    Y_{K-1}(t) &= -\xi_{K-1}(t)+\xi_K(t), & \eta_{K-1}(t) &= -\xi_{K}(t) + (Y_{K-1}(t))^+, \\
    Y_{K-2}(t) &=-\xi_{K-2}(t) - \eta_{K-1}(t), & \eta_{K-2}(t) &= \eta_{K-1}(t) + (Y_{K-2}(t))^+\\
    &\quad \vdots & &\quad \vdots\\ 
    Y_1(t) &= -\xi_1(t) - \eta_2(t), & \eta_1(t) &= \eta_2(t) + (Y_1(t))^+\\
    X_1(t) &= \xi_1(t) + \eta_1(t), &  \xi_j(t) &= X_j(0) + \beta_j t + \sigma_j W_j(t)
\end{align*}
for $j \in \{1, \cdots, K\}$. 
Moreover, $L_t^{(l)}$ is the local time process for $Y_{K-l}(t)$ at the origin for $l \in \{1, \cdots, K-1\}$ and $\{B_{lK}(\cdot)\}_{1\leq l\leq K-1}$ are $K-1$ mutually correlated Brownian motions. For any $l$, $B_{lK}$ is a Brownian motion depending on two independent standard Brownian motions $W_l$ and $W_K$. 

Although the semimartingale decomposition is quite complex, one can still observe that the non-linear term $R(\cdot)$ involves some underlying local time processes. 
A more concrete example appears in double-ended matching queue systems for the case of $K=2$, see for example, \cite{liu2021admission}.

\section{Asymptotic Little's Law}\label{sec: little's law}

In this section, we establish an asymptotic relationship between a queue length process and its corresponding virtual waiting time process, which is called Little's law in the literature. 
The purpose of this asymptotic relationship is straightforward since it enables the system manager to estimate the queue lengths provided the information about customers' waiting times without having access to observe the queue lengths. 
Such a circumstance is normal in telecommunication centers, and its application to matching queue systems is moot (cf. \cite{gans2003telephone}, \cite{mandelbaum2012queues}). 
Since the components in this model are perishable, the virtual waiting time is quite complicated. 
Due to those abandoned components, the order is no longer preserved; namely, a component $j$ of category $i$ may not match with the $j$th components from other categories. 
We will exhibit an explicit expression for the virtual waiting time process in \eqref{virtual waiting time} below. 
Here we intend to show that for each $i\in\{1, \cdots, K\}$, 
\begin{equation}
    \|\hat{Q}_i^n - \lambda_0 \hat{V}_i^n\|_T \to 0 
\end{equation}
in probability as $n\to\infty$, where $\hat{V}_i^n(\cdot):= \sqrt{n}V_i^n(\cdot)$ is the diffusion-scaled virtual waiting time. 

We introduce the virtual waiting time process $V_i^n(t)$ for $i\in\{1, \cdots, K\}$ as the amount of time an infinite patience hypothetical component of category $i$ would have to wait had it arrived at time $t\in[0, T]$, which is given by
\begin{equation}
    V_i^n(t) := \sum_{k=1}^{A_i^n(t)} v_{ik}^n - \int_0^t \mathbbm{1}_{[V_i^n(s)>0]}(s)ds, 
    \label{virtual waiting time}
\end{equation}
where $v_{ik}^n$ represents the amount of time the $k$th component spent in the head position of queue $i$. Notice that if the component $k$ of the $i$th category is abandoned before reaching the head position, we impose $v_{ik}^n := 0$. 
However, if it reaches the first place of a queue, then $v_{ik}^n > 0$, and it may either abandon the system or get matched from there after $v_{ik}^n$ units of time. 
It also defines the amount of workload needed to empty the queue provided no new arrivals after time $t$. 
The definition \eqref{virtual waiting time} is not used in our proof, but we intend to present its precise definition so that we can get a clear picture of the behaviors of the $i$th queue as well as its profiles in later discussions. 
A similar definition has been used in \cite{liu2021admission} for a double-ended queueing model. 

Next, we present the main result (see Theorem \ref{little's law}) of this section, whose proof follows a similar idea as in Theorem 4.5 of \cite{weerasinghe2014diffusion}. 
Hence, we defer its proof to the appendix. 

\begin{theorem}
    Under the assumptions of Theorem \ref{weak convergence theorem} and let $T>0$, we have
\begin{equation}
    \sup_{1\leq i\leq K}\|\hat{Q}_i^n - \lambda_0 \hat{V}_i^n\|_T \to 0 
\end{equation}
in probability as $n\to\infty$, for each $i\in\{1, \cdots, K\}$. 
\label{little's law}
\end{theorem}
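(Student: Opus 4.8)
\textbf{Proof proposal for Theorem \ref{little's law} (asymptotic Little's law).}

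The plan is to establish the identity in two stages: first obtain a convenient representation of the diffusion-scaled virtual waiting time $\hat V_i^n$ that isolates its deterministic drift from a vanishing remainder, and then compare that drift term with $\hat Q_i^n$ directly. The starting point is the workload-type equation \eqref{virtual waiting time}. Observe that $V_i^n$ is the solution of a one-dimensional Skorokhod-type problem driven by the input $\sum_{k=1}^{A_i^n(t)} v_{ik}^n$: it decreases at unit rate whenever it is positive and jumps up by $v_{ik}^n$ at the arrival epochs. By the explicit Skorokhod map representation, $V_i^n(t) = \Phi\big(\sum_{k\le A_i^n(\cdot)} v_{ik}^n\big)(t)$, and the key quantity is the compound input $U_i^n(t) := \sum_{k=1}^{A_i^n(t)} v_{ik}^n$. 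The identity we want is, after multiplying by $\sqrt n$,
\begin{equation*}
    \hat Q_i^n(t) - \lambda_0 \hat V_i^n(t) = \frac{1}{\sqrt n}\Big(Q_i^n(t) - \lambda_0 \sqrt n\, V_i^n(t)\Big),
\end{equation*}
so I would compare $Q_i^n(t)$ against $\lambda_0 \sqrt n\, V_i^n(t)$ on the $\sqrt n$-scale and show the difference is $o(\sqrt n)$ uniformly on $[0,T]$.

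First I would argue that $\sqrt n\, V_i^n(t)$ is, to leading order, $\sqrt n$ times the time needed to drain the current head-of-line content of queue $i$, which is essentially the number of not-yet-matched, not-yet-abandoned components present, scaled by the per-component service-equivalent rate. Concretely, since matching is instantaneous and governed by the FCFM discipline, the hypothetical infinite-patience arrival at time $t$ must wait for all components currently ahead of it in queue $i$ to clear; the expected sojourn contribution of each such component is asymptotically $1/\lambda_0$ because the downstream queues are being fed at rate $\lambda_0 n$ and clear matches at that rate. This gives the heuristic $V_i^n(t) \approx Q_i^n(t)/(\lambda_0 n)$, i.e. $\lambda_0 \sqrt n\, V_i^n(t) \approx Q_i^n(t)/\sqrt n = \hat Q_i^n(t)$. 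To make this rigorous I would use the stochastic boundedness of $\hat Q_i^n$ from Proposition \ref{moment bound and stochastical boundedness of hat Q} together with the weak convergence $\hat Q_i^n \Rightarrow X_i$ from Theorem \ref{weak convergence theorem}, which controls the fluctuations of the input on the diffusion scale, and I would invoke a functional law of large numbers / random-time-change argument for the compound sum $\sum_{k\le A_i^n(t)} v_{ik}^n$ in the spirit of the snapshot principle: over a time window of length $O(1/\sqrt n)$ (the order of $V_i^n$), the queue length $Q_i^n$ is essentially frozen at diffusion scale, so the residual workload is asymptotically $Q_i^n(t)/(\lambda_0 n)$ plus a term that is $o_p(1/\sqrt n)$.

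The precise execution follows the template of Theorem 4.5 in \cite{weerasinghe2014diffusion}: I would write $\lambda_0 \sqrt n V_i^n(t) - \hat Q_i^n(t)$ as a sum of three pieces — a martingale-difference term coming from the centered compound input (controlled by Burkholder's inequality and the moment bounds \eqref{moment condition on hat A}, \eqref{moment bound for hat M}), a Skorokhod-regulator term (the reflection at zero in \eqref{virtual waiting time}, which matches the role of $R^n$ in the queue-length dynamics and cancels to leading order because $\prod_j Q_j^n \equiv 0$ and the analogous idleness structure for $V_i^n$), and a deterministic discretization error from replacing the integer sum $A_i^n$ by its compensator $\lambda_i^n t$ — and show each vanishes in probability uniformly on $[0,T]$, finally taking the supremum over the finitely many $i \in \{1,\dots,K\}$. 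The main obstacle I anticipate is the snapshot/frozen-queue step: because of the coupling through the scalar process $R^n$, the queue $i$ does not clear at a fixed rate but at a rate that depends on which queue is currently the bottleneck, so one must show that over the short draining horizon the bottleneck identity and the effective draining rate $\lambda_0 n$ are stable enough that the error is genuinely $o_p(\sqrt n)$; this is exactly where the stochastic boundedness of the full vector $\vect{\hat Q^n}$ and the continuity of the integral representation (Theorem \ref{continuity of integral representation}) do the heavy lifting, and where care is needed with the abandonment term $\delta_i^n \int_0^\cdot Q_i^n$, which on the $O(1/\sqrt n)$ horizon contributes only $o_p(1/\sqrt n)$ but must be bounded explicitly.
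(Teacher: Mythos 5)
Your proposal differs in a fundamental way from the argument the paper actually uses, and the difference is not merely stylistic: the paper's proof hinges on an exact pathwise identity that your proposal never writes down, and which sidesteps precisely the obstacle (bottleneck-rate stability) you flag as the "main obstacle" at the end. The paper evaluates the queue at the \emph{shifted} random time $t+V_i^n(t)$ and observes the balance equation
\begin{equation*}
    Q_i^n\bigl(t+V_i^n(t)\bigr) = A_i^n\bigl(t+V_i^n(t)\bigr) - A_i^n(t) - \mathring{G}_i^n\bigl(t, t+V_i^n(t)\bigr),
\end{equation*}
which is exact because, by definition of $V_i^n$, everything in front of the hypothetical time-$t$ arrival has been cleared by $t+V_i^n(t)$, so the queue at that instant consists exactly of the intervening arrivals minus those among them that reneged. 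Scaling this by $1/\sqrt n$ makes the term $\lambda_0\hat V_i^n(t)$ appear directly from the Poisson compensator $\lambda_i^n\,V_i^n(t)$, and the residual consists of (a) an increment of $\hat A_i^n$ over $[t,t+V_i^n(t)]$, (b) $(\lambda_i^n/n-\lambda_0)\hat V_i^n(t)$, and (c) a quantity sandwiched between $0$ and an increment of $\hat G_i^n$. Each vanishes uniformly on $[0,T]$ by C-tightness of $\hat A_i^n$ and $\hat G_i^n$ (Corollary~\ref{SB for hat G and weak convergence of hat G}), $\lambda_i^n/n\to\lambda_0$, and $\|V_i^n\|_T\to 0$ in probability (Proposition~\ref{SB for hat V}). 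A final modulus-of-continuity step replaces $\hat Q_i^n(t+V_i^n(t))$ with $\hat Q_i^n(t)$ using C-tightness of $\hat Q_i^n$.

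Your proposal instead tries to compare $\lambda_0\sqrt n\,V_i^n(t)$ with $\hat Q_i^n(t)$ at the \emph{same} time $t$, via a Skorokhod-map representation of $V_i^n$ and a snapshot/frozen-queue argument. This is the gap: you never produce the algebraic bridge connecting $V_i^n(t)$ to $Q_i^n$, and the heuristic ``each waiting component contributes about $1/(\lambda_0 n)$ to $V_i^n(t)$'' is precisely the statement you would need to prove, not something you can invoke. Because matching occurs at a state-dependent rate coupled through the scalar $R^n$, there is no a priori reason the effective clearing rate stays pinned at $\lambda_0 n$ on the draining horizon, and the stochastic boundedness of $\vect{\hat Q^n}$ and Theorem~\ref{continuity of integral representation} (which concerns the continuity of the limiting integral map, not the prelimit dynamics) do not by themselves give you that control. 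The shifted-time identity is the device that makes the whole concern disappear: it is exact regardless of which queue is the bottleneck, it already has the Poisson compensator built in, and after scaling the drift term is literally $(\lambda_i^n/n)\hat V_i^n(t)$, no snapshot approximation required. Separately, the Skorokhod-map representation you write, $V_i^n = \Phi(\sum_{k\le A_i^n(\cdot)} v_{ik}^n)$, is not quite right (the regulator acts on the netput $U_i^n(t)-t$, not on $U_i^n$ alone), and the claim that the $V_i^n$-regulator ``cancels to leading order'' against $R^n$ via $\prod_j Q_j^n\equiv 0$ is asserted without a mechanism; the paper never needs any such cancellation. If you want to salvage your route, the missing step is to prove the stochastic boundedness of $\hat V_i^n$ first (the paper does this via a separate argument comparing $Q_i^n(t+M/\sqrt n)$ with intervening arrivals and abandonments, Proposition~\ref{SB for hat V}), and then switch to the shifted-time balance equation rather than attempting to control the draining rate directly.
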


We close this section by exhibiting an interesting moment bound result (see Proposition \ref{SB of hat V (non-abandonment)} below) regarding the virtual waiting time of the matching queue with no abandonment as proposed in Proposition \ref{weak convergence theorem (non-abandonment)}.

\begin{remark}
Consider the matching queue model with no abandonment as proposed in Proposition \ref{weak convergence theorem (non-abandonment)}. 
We observe that the order is preserved; namely, a component $j$ will certainly match with the $j$th component from other categories since none of those components can leave the system without a match. 
However, in terms of the model proposed in Theorem \ref{weak convergence theorem}, the system loses such a benefit due to the perishable components. 
Alternatively, one may consider employing the expressions of the virtual waiting times to rule out the components that may eventually abandon its queue, but a proper stochastic upper bound needs to be determined, which is a challenging problem due to the intractable matching operation. 
We will study this in future work. 
\end{remark}


\begin{proposition}
    Let the assumptions in Proposition \ref{weak convergence theorem (non-abandonment)} hold and for each $i\in\{1, \cdots, K\}$, we have that $\hat{V}_i^n$ is stochastically bounded and as a consequence, $\|V_i^n\|_T \to 0$ in probability as $n\to\infty$. In addition, we have 
    \begin{equation}
        \sup_{n\geq1}E\left[\abs{\hat{V}_i^n(t)}^2\right] \leq C_V(1+T^b), 
        \label{moment bound for hat V and each t (non-abandonment)}
    \end{equation}
    for each $t\in[0, T]$, where $C_V$ and $b\geq 2$ are constants independent of $T$ and $n$. 
    \label{SB of hat V (non-abandonment)}
\end{proposition}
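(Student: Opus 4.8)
The plan is to exploit the order-preserving structure of the no-abandonment model to tie the virtual waiting time $V_i^n$ directly to the diffusion-scaled queue length $\hat{Q}_i^n$, whose stochastic boundedness and moment bounds are already available. Since no component abandons, the $j$th component of category $i$ is matched precisely with the $j$th components of the other categories, so the head-of-line sojourn times $v_{ik}^n$ are governed entirely by the matching completion process $R^n$. Concretely, I would first argue that in this regime $V_i^n(t)$ equals the time needed to clear the current backlog $Q_i^n(t)$ of category $i$ at the matching rate, and that this backlog is depleted according to future arrivals of the (currently) bottleneck categories. The first step is therefore to write $V_i^n(t)$ as an infimum of the form $\inf\{u\ge 0 : A_j^n(t+u) - A_j^n(t) \ge Q_i^n(t) \text{ for the relevant } j\}$ (up to the order-preserving identification), or more cleanly to bound $\lambda_0 n\, V_i^n(t)$ above and below by $Q_i^n(t)$ plus a term controlled by the oscillation of the arrival processes over a window of length $O(V_i^n(t))$.

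The key steps, in order, are: (i) establish the pathwise sandwich relation between $\lambda_0 n V_i^n(t)$ and $Q_i^n(t)$ using the FCFM discipline and the absence of abandonment, with error terms expressed through $\hat{A}_j^n$ increments and the heavy-traffic drift $(\lambda_j^n-\lambda_0 n)/\sqrt n$; (ii) pass to diffusion scale, giving $\hat{V}_i^n(t) \approx \hat{Q}_i^n(t)/\lambda_0$ up to a remainder that is small in the relevant norm — here I would invoke the stochastic boundedness of $\hat{Q}_i^n$ from Proposition \ref{moment bound and stochastical boundedness of hat Q} (adapted to the no-abandonment model via Proposition \ref{weak convergence theorem (non-abandonment)}) together with the moment bound \eqref{moment condition on hat A} on $\hat{A}_i^n$ and a maximal/modulus-of-continuity estimate for Brownian-type fluctuations over short windows; (iii) combine these to get $\sup_{n\ge 1} E[|\hat{V}_i^n(t)|^2]\le C_V(1+T^b)$ for each fixed $t\in[0,T]$, which also yields stochastic boundedness of $\hat{V}_i^n$ and hence $\|V_i^n\|_T = n^{-1/2}\|\hat{V}_i^n\|_T \to 0$ in probability. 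For the moment bound itself I expect to use $|\hat V_i^n(t)|^2 \le 2\lambda_0^{-2}|\hat Q_i^n(t)|^2 + 2(\text{remainder})^2$, bound the first term by the analogue of \eqref{moment condition on hat Q}, and control the remainder's second moment via Burkholder's inequality applied to the martingale parts of the arrival processes and the Gronwall-type argument already used for $\hat Q_i^n$.

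The main obstacle I anticipate is making the pathwise identification between $V_i^n(t)$ and the queue-clearing time fully rigorous: the virtual waiting time in \eqref{virtual waiting time} is defined through the head-of-line times $v_{ik}^n$ and the indicator $\mathbbm{1}_{[V_i^n(s)>0]}$, and translating this into a clean statement about future arrivals of the bottleneck category requires carefully using that at least one queue is empty at all times together with the order-preserving property — in particular one must argue that the category $i$ backlog is cleared at the instantaneous rate $\lambda_0 n$ in the fluid sense, with the fluctuation correction being $O(\sqrt n)$ uniformly. A secondary technical point is that the window length over which one estimates arrival-process oscillations is itself random (of order $V_i^n(t)$), so one needs an a priori crude bound on $V_i^n(t)$ (e.g. from the total arrivals up to time $t$) before bootstrapping to the sharp $O(1/\sqrt n)$ estimate; this is where the exclusive order-preserving property of Proposition \ref{SB of hat V (non-abandonment)}'s setting does the essential work that is unavailable in the model of Theorem \ref{weak convergence theorem}.
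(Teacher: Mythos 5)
You correctly spot that the order-preserving property of the no-abandonment model is the engine of this result, but the route you propose has a gap that is not merely technical. Your step (ii) asserts a sandwich $\hat{V}_i^n(t)\approx \hat{Q}_i^n(t)/\lambda_0$, but the Little's-law identity actually available (see the proof of Theorem \ref{little's law}) takes the form $\hat{Q}_i^n(t+V_i^n(t)) = \hat{A}_i^n(t+V_i^n(t)) - \hat{A}_i^n(t) + (\lambda_i^n/n)\hat{V}_i^n(t)$: it couples $\hat{V}_i^n(t)$ to $\hat{Q}_i^n$ and to an increment of $\hat{A}_i^n$ evaluated over a window ending at the random time $t+V_i^n(t)$. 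Converting that into a second-moment bound for $\hat{V}_i^n(t)$ requires controlling $E[|\hat{Q}_i^n(t+V_i^n(t))|^2]$ and the oscillation of $\hat{A}_i^n$ over a window of random length $V_i^n(t)$, which is exactly the circularity you flag at the end of your proposal without resolving; it is also why the paper remarks, in the paragraph following the statement, that a supremum-norm moment bound for $\hat{V}_i^n$ (which a successful sandwich would deliver) is not available and only a pointwise-in-$t$ bound is obtained. The invocation of Proposition \ref{moment bound and stochastical boundedness of hat Q} is also off-target: that proposition is proved for the Markovian model with abandonment via the $\hat{M}_i^n$ martingale and Gronwall, neither of which is present in the renewal no-abandonment setting (a moment bound on $\hat{Q}_i^n$ does hold there, but it needs its own short argument from \eqref{moment condition on hat A}).

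The paper's proof of \eqref{moment bound for hat V and each t (non-abandonment)} avoids the sandwich altogether. It conditions on $A_i^n(t)=k$ and uses order preservation to write the virtual waiting time in closed form: the hypothetical arrival at time $t$ is the $(k{+}1)$th in line for category $i$, and since no one reneges it is matched precisely when every other category has received its $(k{+}1)$th arrival, giving $V_i^n(t) = (\max_{j\neq i}\{t_{j,k+1}^n\}-t)^+$ with $t_{j,k+1}^n$ the $(k{+}1)$th renewal epoch of category $j$. Then $E[(V_i^n(t))^2\mid A_i^n(t)=k]\le\sum_{j\neq i}E[(t_{j,k+1}^n-t)^2]$, and the right side is computed exactly from the renewal mean and variance of $t_{j,k+1}^n$; summing against $P[A_i^n(t)=k]$ and re-expressing in terms of $\hat A_i^n(t)$ yields \eqref{moment bound for hat V and each t (non-abandonment)}. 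This is a direct distributional calculation, not a pathwise comparison with $\hat Q_i^n$. Your treatment of the stochastic-boundedness half, by contrast, is essentially the paper's argument (it mirrors Proposition \ref{SB for hat V} with the abandonment term dropped) and is fine.
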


Its novel proof can be found in the appendix, where we mainly employ the benefits of the order-preserving property. 
It is straightforward to deduce the stochastic boundedness if we can obtain the moment bounds for the supremum normal (for instance, see Proposition \ref{moment bound and stochastical boundedness of hat Q}). However, this result does not hold for the supremum norm over $[0, T]$ and instead, we have a bound for each time $t$ in \eqref{moment bound for hat V and each t (non-abandonment)} utilizing the interesting property. 
For matching models and their virtual waiting times, such an approach of obtaining moment bounds can be applied as long as the order is preserved. 
\section{Convergence of Cost Functionals}\label{sec: convergence of cost functionals}

It is quite natural that such matching queues are equipped with cost structures. 
In this section, we intend to address the convergence of performance measures associated with various cost functions under a cost structure containing a holding cost for storing components in queues and a penalty cost for abandoned components. 
We find that there is an unusual restriction of the discount factor that depends on the number of categories $K$ if we consider the infinite-horizon discounted cost functional. 
These will provide possible applications of the asymptotic analysis performed in Section \ref{sec: weak convergence} for cost minimization problems.

\subsection{Cost Structure}

We introduce an infinite-horizon discounted cost functional $J(\vect{\hat{Q}^n}(0), \vect{\hat{Q}^n})$ associated with the $n$th matching queue system introduced in \eqref{diffusion scaled queue length}, which consists of two types of costs: a holding cost generated by storing components in queues and a penalty cost proportional to the number of abandoned components from each category. 
Let $C\in C(\mathbb{R}_+^K)$ be a non-negative holding cost function that characterizes the holding cost per time unit, and let $p_j>0$ represent the cost incurred per abandoned components from category $j\in\{1, \cdots, K\}$ per time unit. Here, $p_j$ can be interpreted as the cost rate of abandonment per component of category $j$ per time unit. 
Let $\gamma > 0$ represent the interest rate. 
The infinite-horizon discounted cost functional is given by
\begin{equation}
    J(\vect{\hat{Q}^n}(0), \vect{\hat{Q}^n}) = E\left[ \int_0^{\infty} e^{-\gamma s}\left(C(\vect{\hat{Q}^n}(s))ds + \sum_{j=1}^K p_j d\hat{G}_j^n(s)\right)\right], 
    \label{cost functional for nth system}
\end{equation}
where $\gamma$ and $p_j$ are positive constants. 
Here, we restrict the parameter $\gamma > 2l c_0(1+K)$ for $l\geq1$, where $c_0>0$ is a constant satisfying $\sup_{1\leq k\leq K} (\delta_k^n) \leq c_0$ as in \eqref{moment condition on hat Q} so that it ensures the uniform integrability in later discussions. 
Here, the value $l$ is associated with appropriate growth conditions of the cost function. 
Moreover, one can observe that this restriction can be fulfilled only for large interest rate $\gamma$ when $K$ is large. 

Our goal is to establish the convergence of cost functional under some growth conditions for the cost function $C(\cdot)$ such that
\begin{equation}
    \lim_{n\to\infty} J(\vect{\hat{Q}^n}(0), \vect{\hat{Q}^n}) = J(\vect{x}, \vect{X}), 
\end{equation}
where $\vect{X}$ is the limiting diffusion process obtained in Theorem \ref{weak convergence theorem}, and 
\begin{equation}
    J(\vect{x}, \vect{X}) = E\left[\int_0^{\infty} e^{-\gamma s}\left( C(\vect{X}(s))ds + \sum_{j=1}^K p_j\delta_j X_j(s)ds\right)\right]. 
    \label{cost functional for limiting process}
\end{equation}
In the rest of this section, we consider a concrete example of cost functions: a holding cost with polynomial growth.

\subsection{Convergence Results}

Consider a cost function of the form $C(\cdot) = (C_1(\cdot), C_2(\cdot), \cdots, C_K(\cdot))$, where $C_j:\mathbb{R}_+ \to [0, \infty)$ for $1\leq j\leq K$ are continuous with polynomial growth, namely
\begin{equation}
    0\leq C_j(x)\leq c_j(1+\abs{x}^p),  
    \label{cost function polynomial growth}
\end{equation}
where $c_j>0$ is a constant and $1\leq p < 2l$ for $l\geq1$. 
Under the same cost structure introduced above, the infinite-horizon discounted cost functional \eqref{cost functional for nth system} associated with the $n$th matching queue system in Theorem \ref{weak convergence theorem} can be written as
\begin{equation}
    J(\vect{\hat{Q}^n}(0), \vect{\hat{Q}^n}) = E\left[\sum_{j=1}^K \int_0^{\infty} e^{-\gamma s} \left(C_j(\hat{Q}_j^n(s))ds + p_j d\hat{G}_j^n(s)\right)\right], 
    \label{cost functional for nth system (polynomial cost)}
\end{equation}
We also introduce an infinite-horizon discounted cost functional $J(\vect{x}, \vect{X})$ associated with the limiting process obtained in Theorem \ref{weak convergence theorem} by
\begin{equation}
    J(\vect{x}, \vect{X}) = E\left[\sum_{j=1}^K \int_0^{\infty} e^{-\gamma s} \left(C_j(X_j(s)) + p_j\delta_j X_j(s)\right)ds\right]. 
    \label{cost functional for limiting process (polynomial cost)}
\end{equation}

Our objective is to show the expected value of the infinite-horizon discounted cost functional of the coupled queueing systems converges to that of the limiting processes.

\begin{theorem}
    Consider the sequence of matching queue processes $(\vect{\hat{Q}^n})$ converges weakly to the diffusion process $\vect{X}$ as described in Theorem \ref{weak convergence theorem}, we have
    \begin{equation}
        \lim_{n\to\infty} J(\vect{\hat{Q}^n}(0), \vect{\hat{Q}^n}) = J(\vect{x}, \vect{X}),  
        \label{convergence of cost functional}
    \end{equation}
    where $J(\vect{\hat{Q}^n}(0), \vect{\hat{Q}^n})$ and $J(\vect{x}, \vect{X})$ are the cost functionals defined in \eqref{cost functional for nth system (polynomial cost)} and \eqref{cost functional for limiting process (polynomial cost)}, respectively. 
    \label{Theorem convergence of cost functionals (polynomial)}
\end{theorem}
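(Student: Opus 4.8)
The plan is to establish \eqref{convergence of cost functional} by splitting the cost functional into a finite-horizon piece and an infinite-horizon tail, controlling the tail uniformly in $n$ via the discount-rate restriction $\gamma > 2lc_0(1+K)$, and handling the finite-horizon piece with the weak convergence of Theorem~\ref{weak convergence theorem} (strengthened to $L^p$ convergence on a common probability space by Corollary~\ref{L^p extension}). Concretely, for each fixed $T>0$ write
\begin{equation*}
    J(\vect{\hat{Q}^n}(0), \vect{\hat{Q}^n}) = E\left[\sum_{j=1}^K \int_0^{T} e^{-\gamma s}\left(C_j(\hat{Q}_j^n(s))ds + p_j d\hat{G}_j^n(s)\right)\right] + \mathcal{E}_T^n,
\end{equation*}
where $\mathcal{E}_T^n$ is the corresponding tail over $[T,\infty)$, and similarly for $J(\vect{x},\vect{X})$ with tail $\mathcal{E}_T^\infty$. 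I would first show $\sup_n \abs{\mathcal{E}_T^n} \to 0$ and $\abs{\mathcal{E}_T^\infty}\to 0$ as $T\to\infty$, then show that for each fixed $T$ the finite-horizon pieces converge as $n\to\infty$, and finally combine via the standard $\varepsilon/3$ argument.

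For the tail estimate, the holding-cost contribution on $[T,\infty)$ is bounded using the polynomial growth \eqref{cost function polynomial growth}: $\int_T^\infty e^{-\gamma s} E[\,\abs{\hat{Q}_j^n(s)}^p\,]ds$, and from the moment bound \eqref{moment condition on hat Q} in Proposition~\ref{moment bound and stochastical boundedness of hat Q} one has $E[\,\abs{\hat{Q}_j^n(s)}^{p}\,] \le E[\|\hat{Q}_j^n\|_s^p] \le C(1+s^{b'})e^{2c_0(1+K)s}$ (after passing from the second moment to the $p$th moment, $p<2l$, by the same Gronwall-type argument; this is the step where the exponent $l$ and the condition $\gamma>2lc_0(1+K)$ enter). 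Since $\gamma$ strictly dominates the exponential growth rate $2lc_0(1+K)$, the integrand decays exponentially and the tail is bounded by a quantity that is uniform in $n$ and vanishes as $T\to\infty$. For the abandonment term, using $\hat{G}_j^n(s) = \delta_j^n\int_0^s \hat{Q}_j^n(u)du + \hat{M}_j^n(s)$ (from \eqref{hat M martingale}) and integration by parts on $\int_T^\infty e^{-\gamma s}d\hat{G}_j^n(s)$, together with Proposition~\ref{SB for hat M}, the same exponential-decay control applies. The limiting tail $\mathcal{E}_T^\infty$ is handled identically using the moment bounds for $\vect{X}$, which follow from $\vect{\hat{Q}^n}\Rightarrow\vect{X}$ together with Fatou's lemma applied to \eqref{moment condition on hat Q}.

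For the fixed-$T$ finite-horizon convergence, I would work on the special probability space of Corollary~\ref{L^p extension} where $\|\vect{\hat{Q}^n}-\vect{X}\|_T\to 0$ in $L^p$. The holding-cost term converges by continuity of $C_j$ plus uniform integrability: the family $\{\sup_{s\le T}C_j(\hat{Q}_j^n(s))\}_n$ is dominated (in $L^{2l/p}$, which exceeds $1$) by $c_j(1+\|\hat{Q}_j^n\|_T^p)$ whose $(2l/p)$-moment is bounded by \eqref{moment condition on hat Q}, so dominated convergence applies after passing to the bounded time interval. For the abandonment term, rewrite $E[\int_0^T e^{-\gamma s}p_j d\hat{G}_j^n(s)] = p_j\delta_j^n E[\int_0^T e^{-\gamma s}\hat{Q}_j^n(s)ds] + p_j E[\int_0^T e^{-\gamma s}d\hat{M}_j^n(s)]$; the martingale term vanishes (it equals $p_j E[e^{-\gamma T}\hat{M}_j^n(T) + \gamma\int_0^T e^{-\gamma s}\hat{M}_j^n(s)ds]$, which is zero since $\hat{M}_j^n$ is a mean-zero martingale, or is $o(1)$ after the Skorokhod embedding since $\hat{M}_j^n\Rightarrow 0$ by Proposition~\ref{SB for hat M} and \eqref{quadratic variation process}), and $\delta_j^n\to\delta_j$ with $\int_0^T e^{-\gamma s}\hat{Q}_j^n(s)ds \to \int_0^T e^{-\gamma s}X_j(s)ds$ in $L^1$ by the $L^p$ convergence. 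Assembling the three $\varepsilon/3$ bounds completes the proof.

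The main obstacle I anticipate is the passage from the second-moment bound \eqref{moment condition on hat Q} to the $p$th-moment (indeed $2l$th-moment) bound with the correct exponential rate $2lc_0(1+K)$ in the exponent, and verifying that this rate is exactly what the discount restriction $\gamma>2lc_0(1+K)$ was designed to beat; this requires redoing the Gronwall argument of Proposition~\ref{moment bound and stochastical boundedness of hat Q} at a general even moment, again using the trick of dominating the scalar-valued $\hat{R}^n$ by the sum of the entries, and keeping careful track of how the constant in the exponent scales with the moment order. Everything else is a fairly routine truncation-plus-uniform-integrability argument.
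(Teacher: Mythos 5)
Your proposal is correct and follows essentially the same route as the paper's proof: both hinge on the higher-order ($2l$th) moment bound for $\|\vect{\hat{Q}^n}\|_T$ with exponential growth rate $2lc_0(1+K)$, the discount restriction $\gamma>2lc_0(1+K)$ to dominate that growth, the Skorokhod/$L^p$ device of Corollary~\ref{L^p extension}, and the martingale decomposition of $\hat{G}_j^n$ together with Fubini/integration by parts for the abandonment cost. The only difference is organizational --- you run a finite-horizon truncation plus $\varepsilon/3$ argument, whereas the paper verifies uniform integrability of $e^{-\gamma s}\abs{C_j(\hat{Q}_j^n(s))}^{1+\delta}$ directly over $[0,\infty)$ --- and the step you flag as the main obstacle (redoing the Gronwall argument at the $2l$th moment) is exactly what the paper carries out in \eqref{higher order moment bound for hat Q}.
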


We delay its proof to the appendix, where we mainly verify the uniform integrability of appropriate integrands by considering the expectations separately under some restrictions.

Some comments on Theorem \ref{Theorem convergence of cost functionals (polynomial)} and its proof are in order. First, we recall the growth condition for the holding cost function $C(\cdot)$, which is given by $0\leq C_j(x)\leq c_j(1+\abs{x}^p)$ for $1\leq p< 2l$ with $l\geq1$ assumed in \eqref{cost function polynomial growth}. 
Since we can pick any $l\geq 1$, the polynomial growth assumption can be extended to any order. 
If we pick $l=1$, our result could be weakened to the case of polynomial growth with $1\leq p<2$, which holds in general due to the second-moment condition of the diffusion scaled centered general arrival processes as described in \eqref{moment condition on hat A}. 
However, for the Markovian matching queue model, we may derive a higher-ordered moment condition for the diffusion-scaled-centered Poisson arrivals, 
which contributes to the growth condition of the cost function with higher orders. 
Second, the assumption for the interest rate $\gamma$, which is given by $\gamma > 2lc_0(1+K)$ with $l\geq1$, guarantees uniform integrability.
This restriction only allows us to take large $\gamma$ when the amount of categories $K$ is large. 
Ideally, we desire to relax such a restriction by formulating a non-trivial higher-order 
(or even exponential)
moment 
estimate
that is similar to the one in Proposition \ref{moment bound and stochastical boundedness of hat Q}, but 
with a tighter upper bound.
Third, it is natural to consider an extension of more general holding costs. For instance, we may assume an arbitrary non-decreasing holding cost and impose a convexity condition. However, the uniform integrability of proper integrands needs to be fulfilled. 
Due to the depreciation in practice, we consider the infinite-horizon discounted cost, but other performance measures can be established per interest. 
We will consider more concrete control problems in our follow-up paper \cite{xiewu2023control}. 
Additionally, a performance measure computation of some special coupled processes and distribution functions can be found in Section 3.3.3 of \cite{xie2022topics}. 

\section{Numerical Simulations}\label{sec: numerical simulations}
In this section, we intend to perform numerical simulations of some interesting cases of the above-proposed matching queue system, and our goal is to understand the dynamics of the queue length processes in heavy traffic. 
To this end, we discretize the limiting process \eqref{limiting process (model with abandonment)} and employ the trapezoidal rule to tackle the integral term so that for a time partition $0= t_0 < t_1 < \cdots < t_N = T$ with time length $\Delta t = T/N$, $N=250$ and $T=1$, at time $(t_j)_{j=1}^N$, 
\begin{equation}
    \vect{X}(t_j) = \vect{X}(0) + \beta t_j + \Sigma \vect{W}(t_j) - \vect{G}(t_j) - R(t_j)\vect{I}, 
\end{equation}
where $\vect{G}(t_j) = (G_1(t_j), \cdots, G_K(t_j))^\intercal$ and for $i\in\{1, \cdots, K\}$, 
\begin{equation}
    G_i(t_j) = \delta_i \sum_{k=1}^j \frac{\Delta t}{2} (X_j(t_{k-1}) + X_j(t_k)). 
\end{equation}
Moreover, the matching completion process $R(t_j)$ is as defined in \eqref{limiting process (model with abandonment)}. 
In our simulations, we intend to let the diffusion coefficients be identical and study the behaviors of queues along with the number of categories and drifts. 

\begin{example}[Variation of category number $K$]
\label{example 1}
    Intuitively, if we have many categories $K$, the number of matchings should be relatively less than that with small categories. To demonstrate such a relationship, we take different numbers of categories $K \in \{2, 3, 4, 5, 20, 80, 500\}$. We assume initial states $x_i\in[0, 0.02]$, diffusion coefficients $\sigma_i$'s are fixed constant 2, drifts $\beta_i$'s and abandonment rates $\delta_i$'s are randomly generated over interval $[-0.3, 0.1]$ and $[0.01, 1]$, respectively. 
    We observe in Figure \ref{fig: matching completion} that since the matching completion $\hat{R}^n(\cdot)$ process is centered as defined in \eqref{diffusion scales}, more categories yield fewer matchings, which further renders the limiting processes $R(\cdot)$ rapidly decreasing. 
    \begin{figure}[h!]
        \centering
        \includegraphics[width=0.6\linewidth]{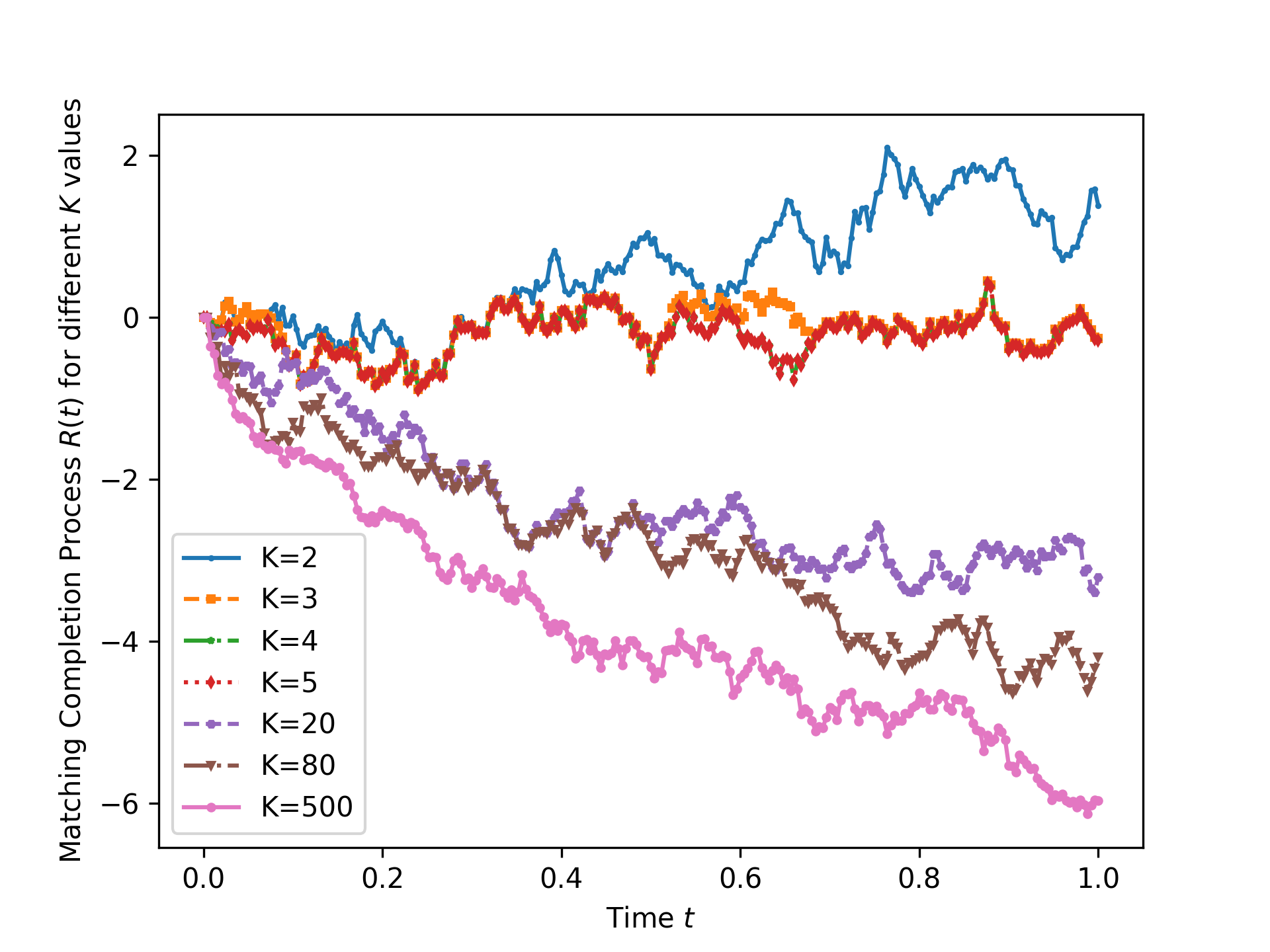}
        \caption{Matching completion process $R$ for different $K$ values}
        \label{fig: matching completion}
    \end{figure}
    Fewer matchings also mean components have to stay in their queues for a longer time, which may generate more abandonment. For instance, we consider the first category in Figure \ref{fig: abandonment}, and the other classes follow a similar fashion. 
    \begin{figure}[h!]
        \centering
        \includegraphics[width=0.6\linewidth]{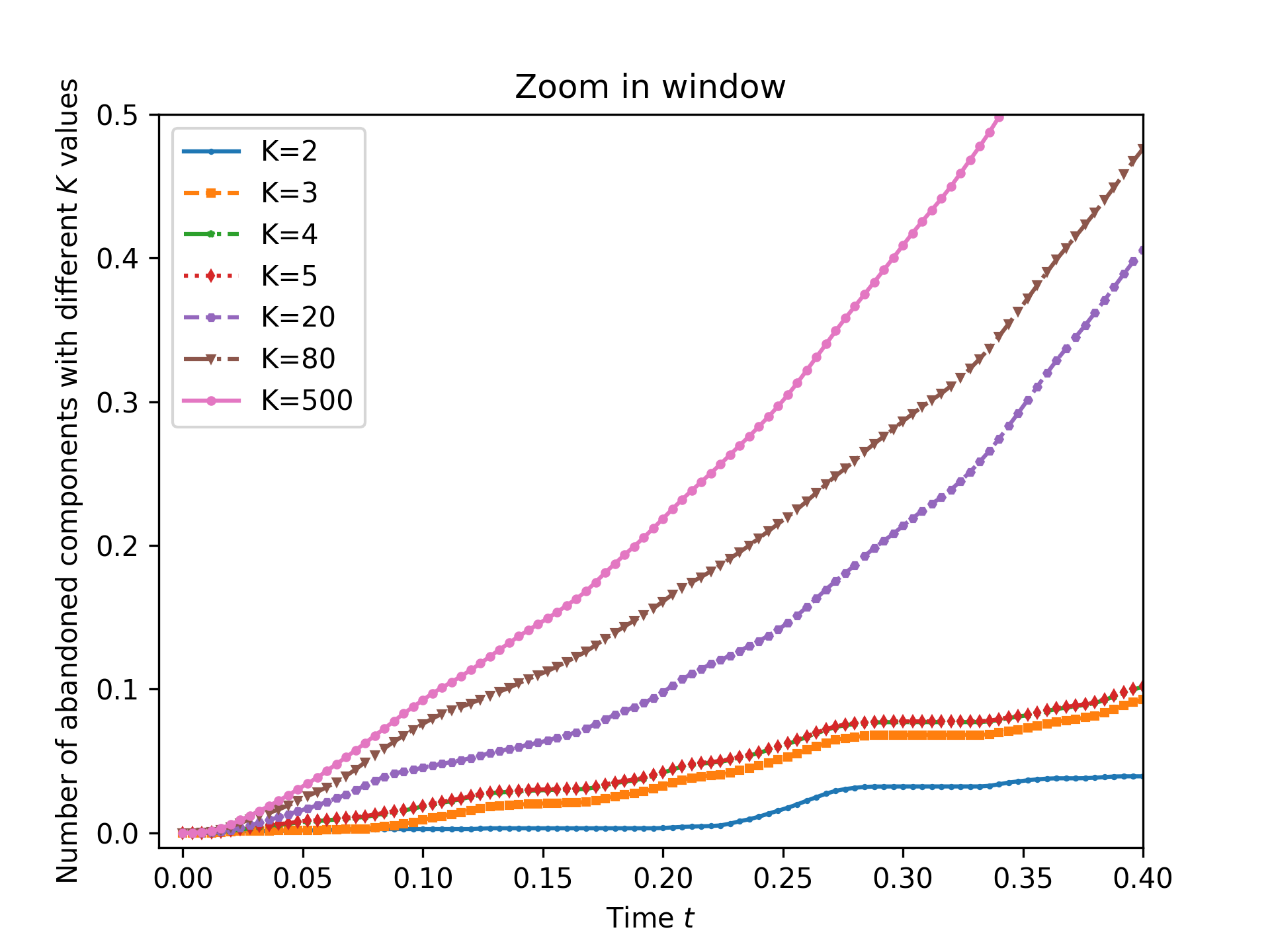}
        \caption{Number of abandoned components in the first category queue for different $K$ values}
        \label{fig: abandonment}
    \end{figure}
\end{example}

\begin{example}[Stickiness as category number $K$ increases]
    We are interested in observing the dynamics of stickiness as $K\to\infty$. Here, by stickiness, we mean the behavior of queue lengths sticking at the origin for a random time, which depends on the profiles of other queues. In Figure \ref{fig: queue lengths for different categories}, we conduct simulations for different numbers of categories, namely, $K = 2, 3, 4, 5$. Here, we assume the same system parameters as in Example \ref{example 1}.  
    One can observe that when $K=3$, the stickiness of queue $X_2$ becomes less often due to the presence of queue $X_3$, and this observation remains true for the rest of cases $K=4, 5$. 
    Since drifts and abandonment rates for the same queue are identical throughout the variation of $K$'s, one may observe similar shapes of paths for identical queues, for instant queue 1, $X_1$. 
    \begin{figure}[h!]
        \centering
        \subfigure{\includegraphics[width=0.46\textwidth]{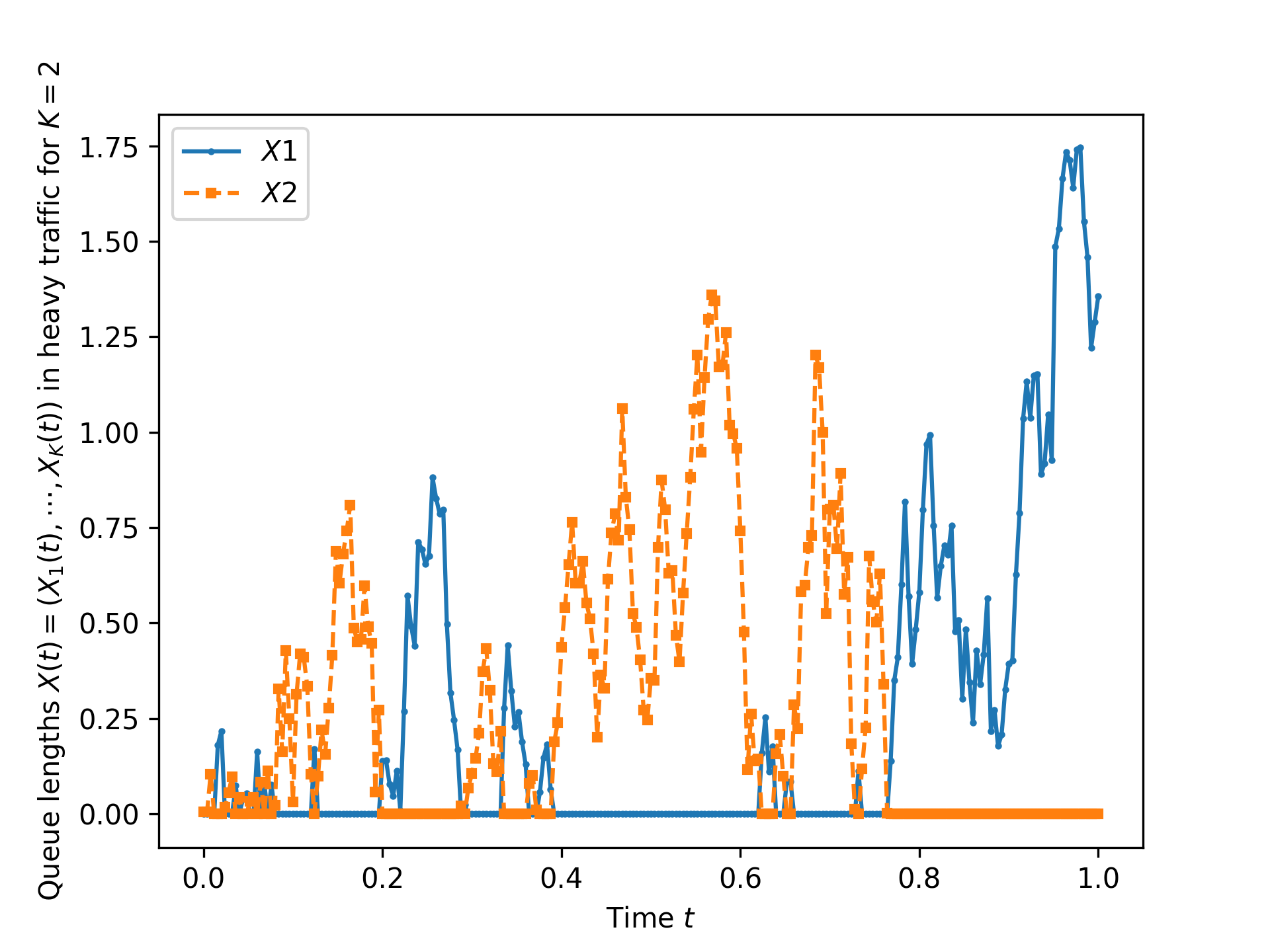}} 
        \subfigure{\includegraphics[width=0.46\textwidth]{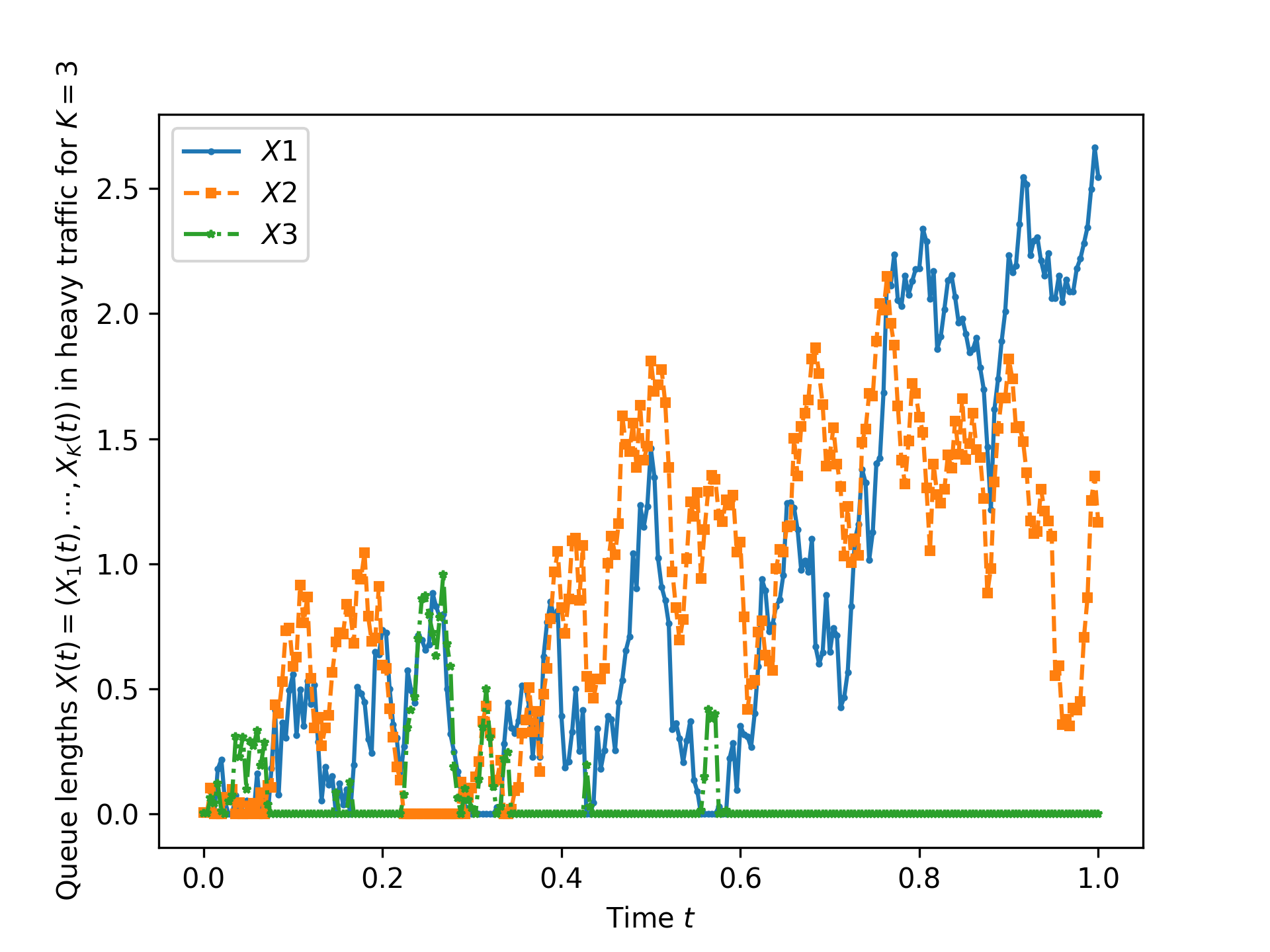}} 
        \subfigure{\includegraphics[width=0.46\textwidth]{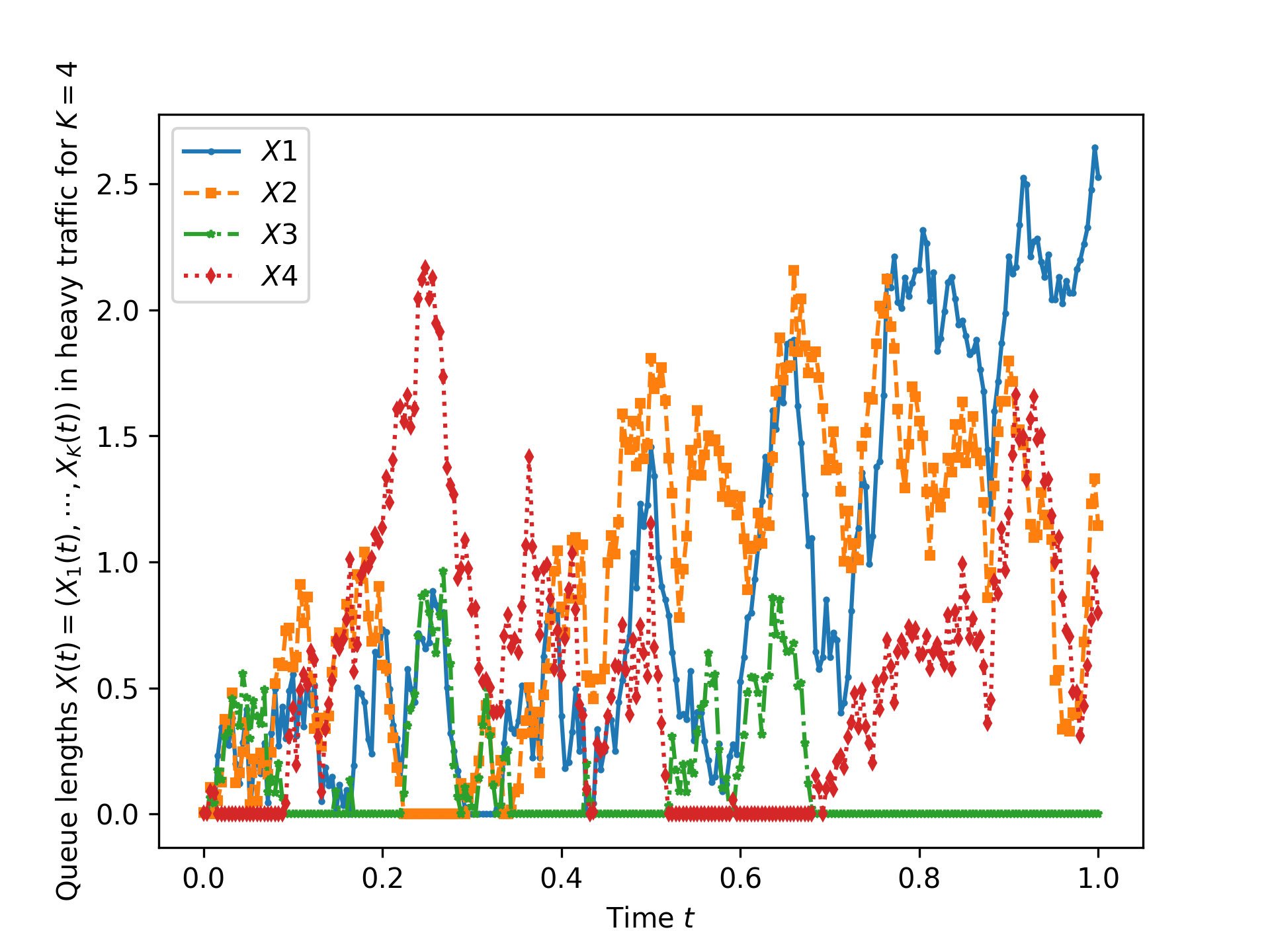}}
        \subfigure{\includegraphics[width=0.46\textwidth]{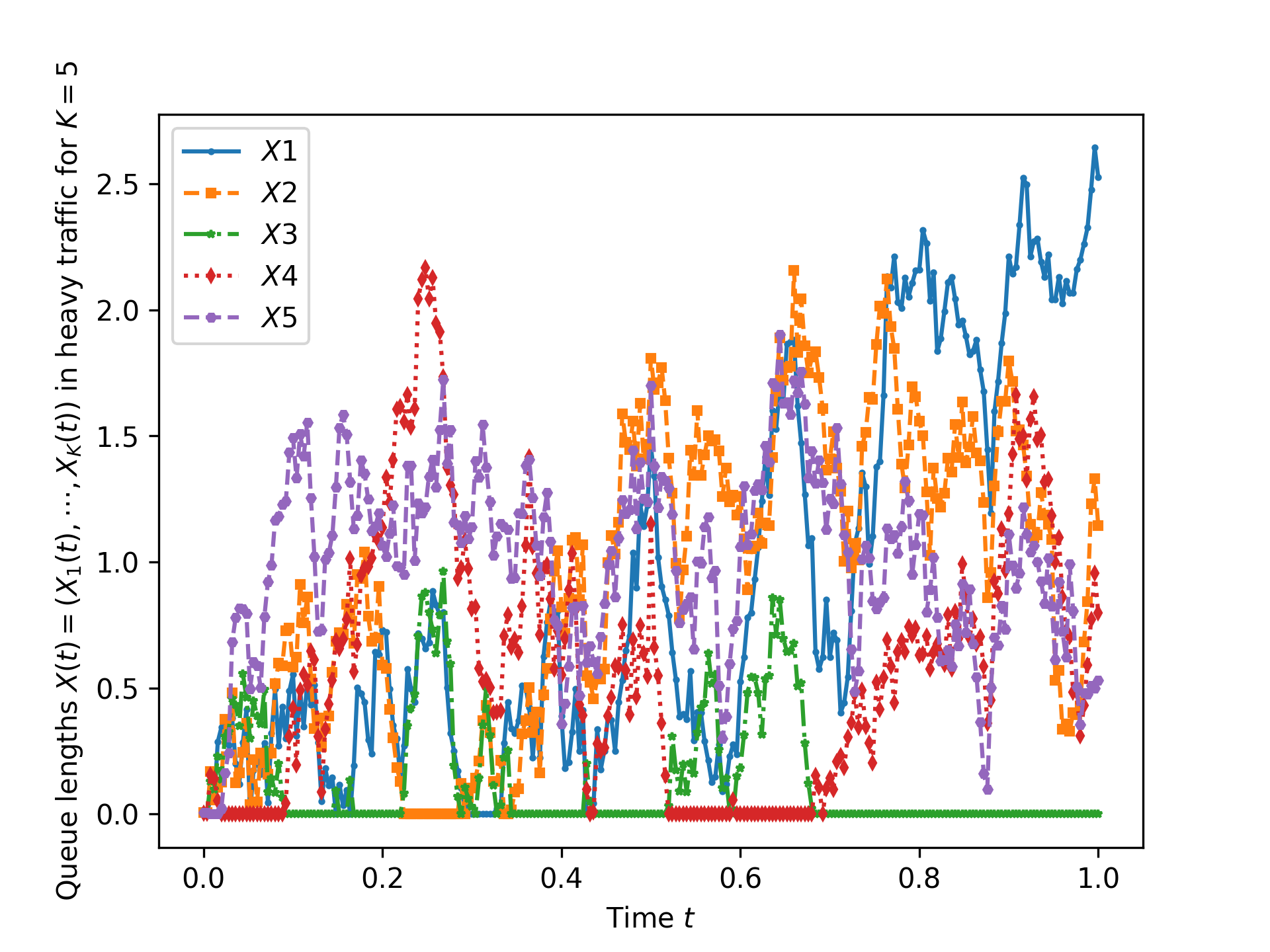}}
        \caption{Queue lengths in heavy traffic for different $K$ values}
        \label{fig: queue lengths for different categories}
    \end{figure}
    As we conduct the simulation for more than five categories so as to let $K\to\infty$, we can observe less stickiness, which indicates the fraction of time a queue remains empty is decaying since other new component queues share the status of empty queues. However, its declining velocity remains unknown, and we will analyze more about this in further studies. 
    
    Intuitively, the drift coefficients should dominate this quantity since they can provide positive or negative forces over time. 
    In Figures \ref{fig: queue lengths for different drifts}, we consider a heavy traffic limit of queue lengths with $K=4$ categories and different drift coefficient $\beta_3$ for the third queue as an example. 
    One can observe that depending on the value for $\beta_{3}$, the proportion of times the corresponding queue stays at zero less often for higher drifts. Such a relationship can be found in the following Table \ref{tab: queue lengths for different drifts} for different drift $\beta_i\in\{-4, -3, -2, 1, 2, 4, 6\}$ for $i\in\{1, 2, 3, 4\}$. 
    \begin{table}[h]
    \begin{center}
        \caption{Proportion of time queue $X_i$'s stays at zero for different drift $\beta_i$ values}
        \label{tab: queue lengths for different drifts}
        \begin{tabular}{@{}cccccccc@{}}
        \toprule
        $\beta_1$ & -4    & -3   & -2    & 1     & 2     & 4     & 5  \\ 
        \midrule
        $X_1$ & 0.6 & 0.552 & 0.448 & 0.036 & 0.036 & 0.024 & 0.02 \\ 
        \botrule
        $\beta_2$ & -4    & -3   & -2    & 1     & 2     & 4     & 5  \\ 
        \midrule
        $X_2$ & 0.756 & 0.656 & 0.468 & 0.008 & 0.004 & 0.0 & 0.0 \\ 
        \botrule
        $\beta_3$ & -4    & -3   & -2    & 1     & 2     & 4     & 5  \\ 
        \midrule
        $X_3$ & 0.984 & 0.98 & 0.976 & 0.748 & 0.652 & 0.46 & 0.196 \\ 
        \botrule
        $\beta_4$ & -4    & -3   & -2    & 1     & 2     & 4     & 5  \\ 
        \midrule
        $X_4$ & 0.5 & 0.452 & 0.332 & 0.172 & 0.076 & 0.0 & 0.0 \\ 
        \botrule
        \end{tabular}
    \end{center}
    \end{table}
    In our simulation, we fix the drifts for the other queues to be 1, namely, $\beta_j = 1$ for $j \neq i$, and only vary the drift of the $i$th queue, $\beta_i$, and we obtain the proportion of time the corresponding queue $X_i$ remains at zero. 
    It is straightforward to see that larger drifts provide positive forces to drag queue paths travel away from zero more quickly. 
    \begin{figure}[h]
        \centering
        \subfigure{\includegraphics[width=0.46\textwidth]{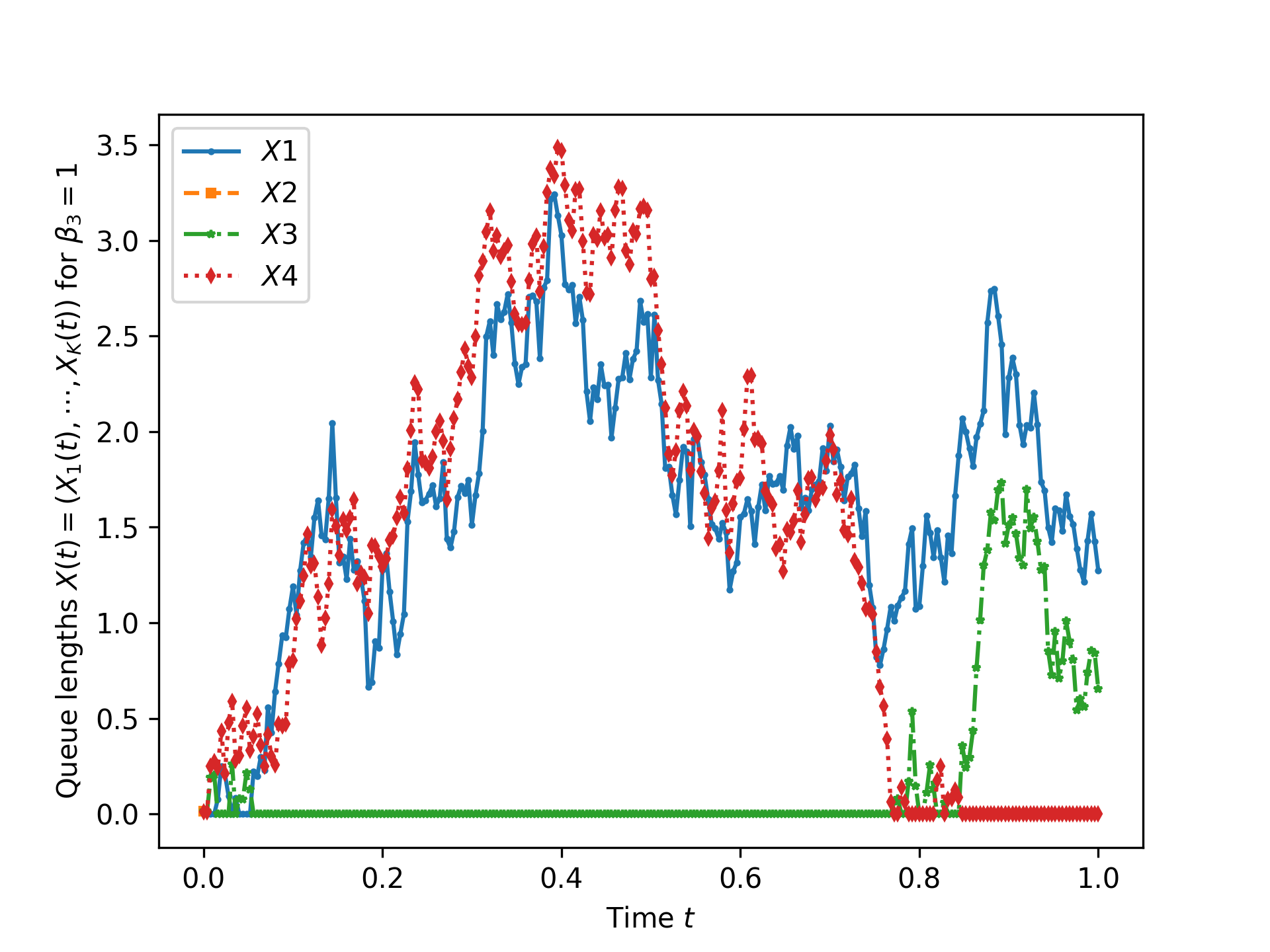}} 
        \subfigure{\includegraphics[width=0.46\textwidth]{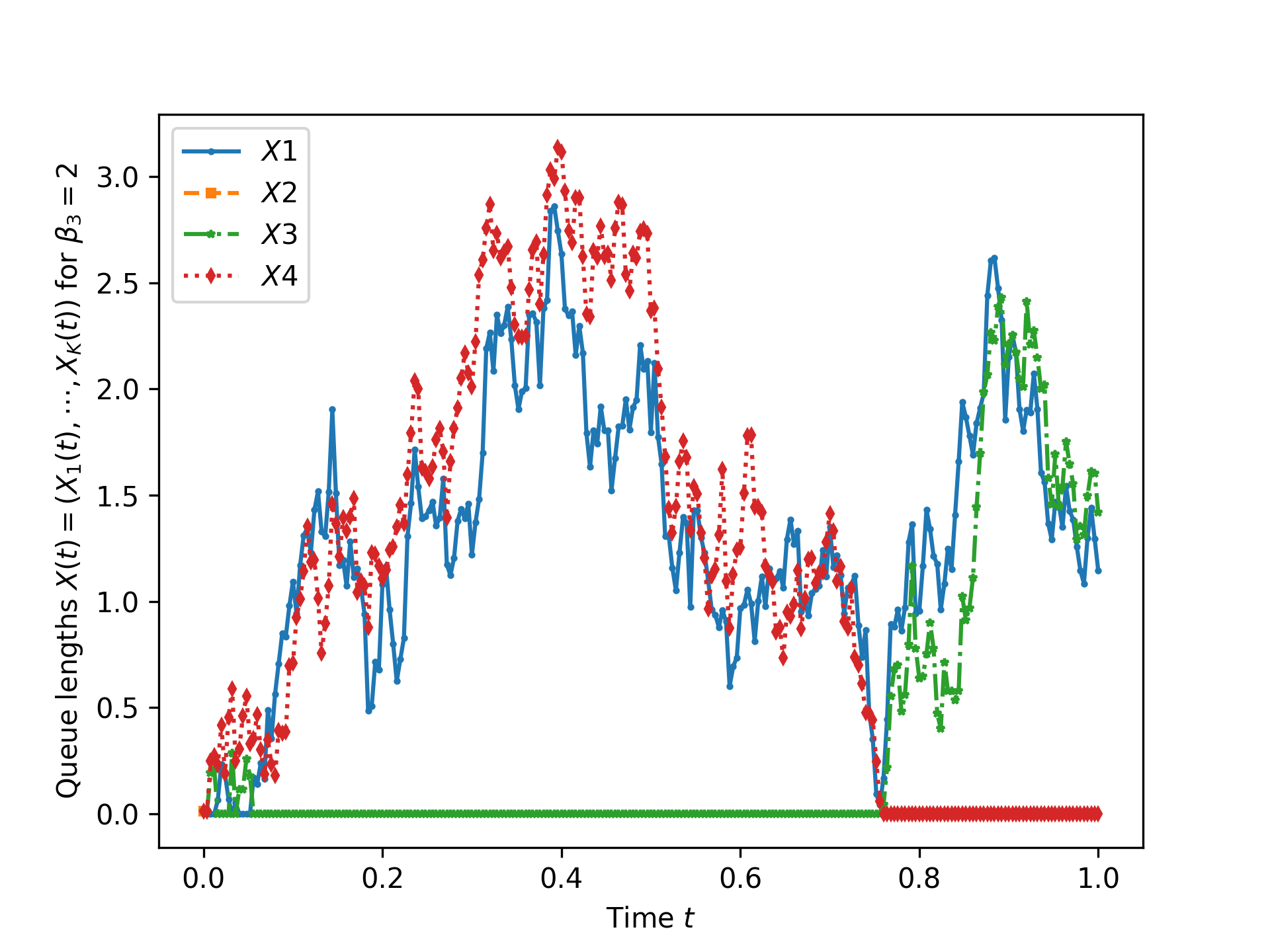}} 
        \subfigure{\includegraphics[width=0.46\textwidth]{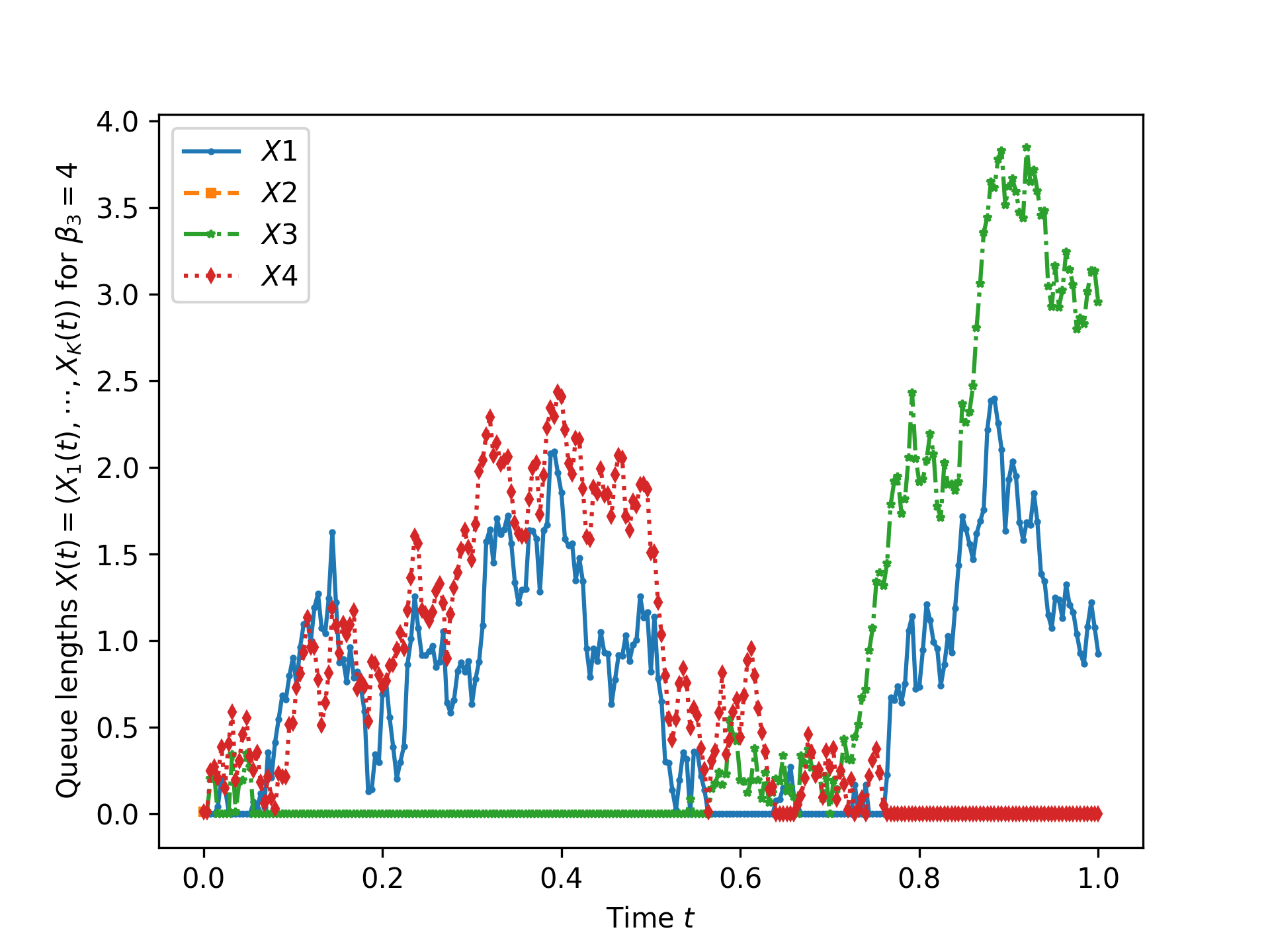}}
        \subfigure{\includegraphics[width=0.46\textwidth]{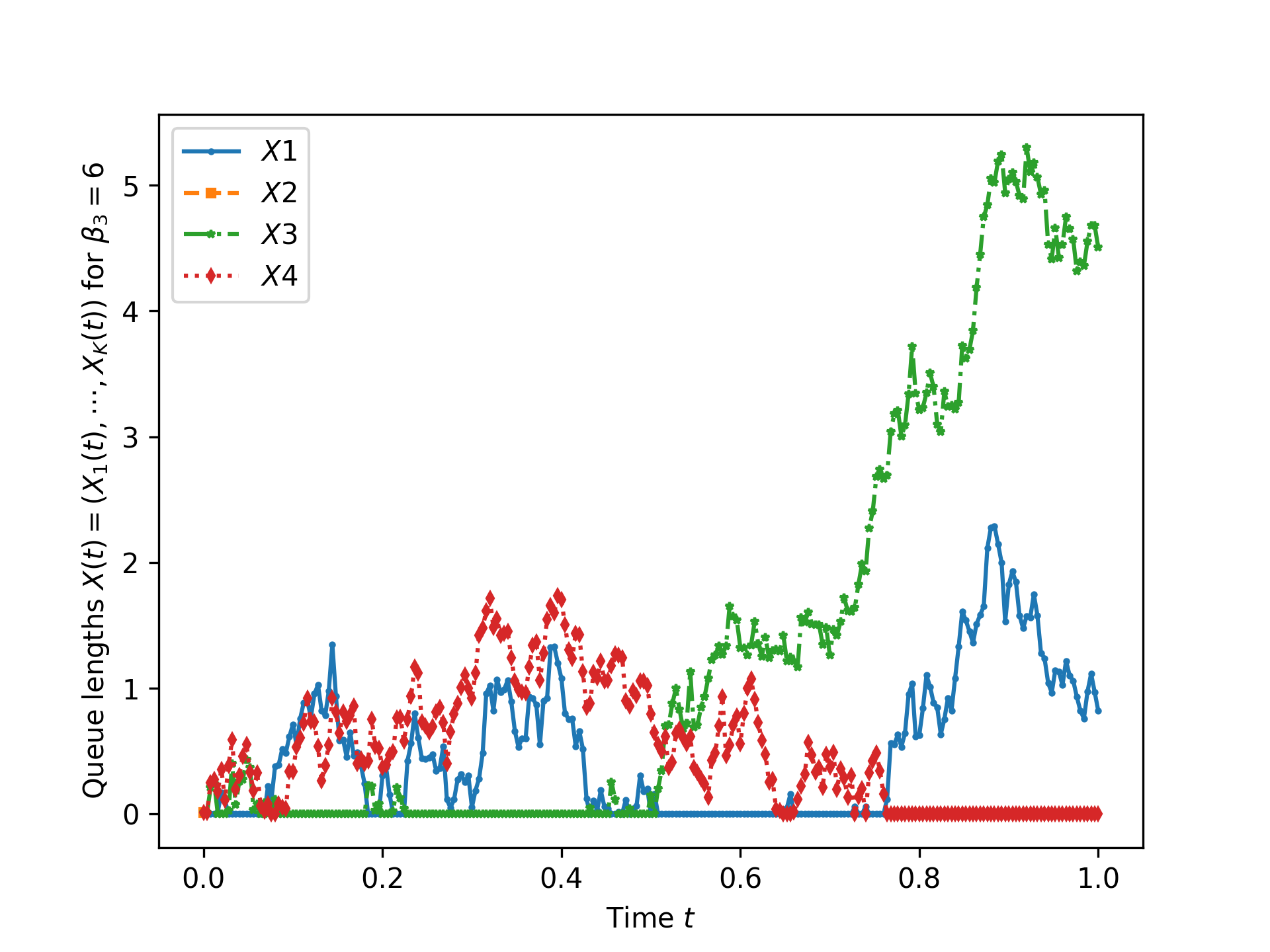}}
        \caption{Queue lengths in heavy traffic for different drift $\beta_3$ values}
        \label{fig: queue lengths for different drifts}
    \end{figure}
\end{example}



\section{Conclusion}\label{sec: conclusion}

In this article, we start by proposing the asymptotic framework of a sequence of multi-component matching queue systems with perishable components involving an explicit matching completion process and then establishing the heavy traffic limit under mild assumptions. 
We observe that the limiting process is characterized by a stochastic integral equation with a special coupling phenomenon, which can be called the coupled stochastic integral equation. To the best of our knowledge, this is relatively new compared with the conventional heavy-traffic limits, and its properties are of interest. 
We also construct the asymptotic Little's law for the matching queue system and further exhibit the convergence of cost functionals so that it provides the possibility to formulate control problems. 
We exhibit some numerical examples of the dynamics of coupling queueing systems to provide insights as category $K\to\infty$ and the stickiness depending on drift coefficients. 

There are numerous future works that can be done related to the matching queue models. 
In our follow-up paper \cite{xiewu2023control}, we impose each category queue with a buffer size, which could be controlled later to achieve an optimal profit situation under proper cost structures. The difficulty comes from establishing the asymptotic heavy-traffic approximation due to the twisted relationships. Recently, we had some interesting yet non-trivial ideas to prove the well-posedness of such potential coupled regulated limiting processes by studying the queues that reach the origin and boundaries, separately. Meanwhile, we also attempt to relax the assumptions by taking general distributed arrivals and abandonment distributions. 
It is also natural to extend the matching system as an interface provided to customers to generate distinct products, where we may consider a hypergraphical matching system along with some matching structures such that each node gathers multiple distinct components (cf. \cite{rahme2021stochastic}). However, an explicit expression for the matching completion should be established to characterize all the matches from each node, and the order of the long-run equilibrium in time $t\to\infty$ and in sequence $n\to\infty$ are of interest. We will consider these projects in further studies. 
\bmhead{Acknowledgments}

The author would like to acknowledge his advisor, Ananda Weerasinghe, for his guidance, patience, enthusiasm, and inspiration throughout the research and the writing of the paper. The author also would like to acknowledge Xin Liu and Ruoyu Wu for their suggestions on the content and structure of this paper.

\begin{appendices}

\setcounter{equation}{0}

\section{Proofs}
\label{appendix1}

\subsection{Proof of Lemma \ref{prove hat M is a martingale}}
\label{sec: prove hat M is a martingale}

\begin{proof}[Proof of Lemma \ref{prove hat M is a martingale}]
Observe that $I_i^n(t)$ is a stochastic process with continuous non-decreasing non-negative sample paths. 
We also know that $\{I_i^n(t) < x\}\in\bar{\mathcal{F}}_x^n$ for all $x\geq0$ and $t\geq0$, since to know $I_i^n(t)$, we need all the information of $Q_i^n(s)$ for $0\leq s\leq t$, which depends on $I_i^n(s)$ for all $0\leq s\leq t$ by \eqref{queue length}. 
Thus, to evaluate $I_i^n(t)<x$, it suffices to consider $\{N_i(u): 0\leq u\leq x\}$. This concludes that $I_i^n(t)$ is an $\bar{\mathcal{F}}_x^n$-stopping time for each $t\geq0$. By \eqref{queue length} and the non-negativity of $G_i^n$ in \eqref{abandonment process} and $R^n$ in \eqref{number of matches with G}, we have a crude inequality $Q_i^n(t) = Q_i^n(0) + A_i^n(t) - G_i^n(t) - R^n(t) \leq Q_i^n(0) + A_i^n(t)$. 
Using this inequality and since $Q_i^n(0)$ is deterministic, we further have
\begin{equation*}
\begin{aligned}
    E\left[\delta_i^n\int_0^tQ_i^n(s)ds\right] &\leq t\delta_i^n\left(Q_i^n(0) + E\left[A_i^n(t)\right]\right)
    =t\delta_i^n\left(Q_i^n(0) + \lambda_i^n t\right)<\infty,
\end{aligned}
\end{equation*}
and
\begin{equation*}
\begin{aligned}
    E\left[N_i\left(\delta_i^n\int_0^t Q_i^n(s)ds\right)\right] &\leq E\left[N_i\left(t\delta_i^n(Q_i^n(0) + A_i^n(t))\right)\right]
    =t\delta_i^n(Q_i^n(0) + \lambda_i^n t)<\infty. 
\end{aligned}
\end{equation*}
Since all the conditions of Lemma 3.2 in \cite{pang2007martingale} are fulfilled, we conclude that
\begin{equation*}
    N_i\left(\delta_i^n\int_0^t Q_i^n(s)ds\right) - \delta_i^n\int_0^tQ_i^n(s)ds,
\end{equation*}
is a square-integrable martingale with respect to $(\bar{\mathcal{F}}_{I_i^n}^n)$. Consequently, $\hat{M}_i^n$ is a square-integrable $\mathcal{F}_t^n$-martingale with quadratic variation process in \eqref{quadratic variation process} since the increments of arrival process $A_i^n(t+s) - A_i^n(t)$ for $s\geq0$ is independent of $Q_i^n(s)$ for $0\leq s\leq t$. 
\end{proof}

\subsection{Proof of Proposition \ref{SB for hat M}}
\label{sec: SB for hat M}

\begin{proof}[Proof of Proposition \ref{SB for hat M}]
    Since $\hat{M}_i^n$ is a martingale, by the Burkholder's inequality (see Theorem 45 in Protter \cite{protter2005stochastic}) and \eqref{quadratic variation process}, we have
    \begin{equation*}
        E\left[\|\hat{M}_i^n\|_T^2\right] \leq \Tilde{C} E\left[[\hat{M}_i^n, \hat{M}_i^n](T)\right] 
        = 
        \Tilde{C} E\left[\delta_i^n\int_0^T \bar{Q}_i^n(s)ds\right],
    \end{equation*}
    where $\Tilde{C}$ is some positive constant. 
    Since $A_i^n$ is a Poisson arrival process and $\lambda_i^n/n\to\lambda_0$ by \eqref{regime}, we further have
    \begin{equation*}
    \begin{aligned}
        E\left[\delta_i^n\int_0^T \bar{Q}_i^n(s)ds\right] 
        \leq T\delta_i^n \left(\bar{Q}_i^n(0) + \frac{1}{\sqrt{n}}E\left[\|\hat{A}_i^n\|_T\right] + \frac{\lambda_i^n}{n}T\right)
        \leq C_1(1+T^l), 
    \end{aligned}
    \end{equation*}
    where $C_1$ and $l\geq2$ are constants independent of $T$ and $n$. 
    This concludes \eqref{moment bound for hat M}. Consequently, by the Chebyshev's inequality, we have
    \begin{equation*}
        \lim_{a\to\infty}\limsup_{n\to\infty} P\left[\|\hat{M}_i^n\|_T^2>a\right] = 0.
    \end{equation*}
This completes the proof. 
\end{proof}

\subsection{Proof of Proposition \ref{moment bound and stochastical boundedness of hat Q}}
\label{sec: proof of moment bound and stochastical boundedness of hat Q}

\begin{proof}[Proof of Proposition \ref{moment bound and stochastical boundedness of hat Q}]
    Considering the martingale representation \eqref{rewritten diffusion scaled queue length}, we Assume 
    \begin{equation}
        B_T^n = \sum_{k=1}^K \left(\hat{Q}_k^n(0) + \|\hat{A}_k^n\|_T + \left\lvert\frac{\lambda_k^n - \lambda_0 n}{\sqrt{n}}\right\rvert T + \|\hat{M}_k^n\|_T\right). 
        \label{assume an upper bound for input}
    \end{equation}
    By \eqref{regime}, \eqref{moment condition on hat A}, and \eqref{moment bound for hat M}, we can represent $B_T^n := B_T^n(\omega)$ as a square-integrable random variable with the second moment bound $E\left[(B_T^n)^2\right]\leq C_2(1+T^b)$, where $C_2$ and $b\geq 2$ are constants independent of $T$ and $n$. 
    Next, we intend to find a moment bound for $\vect{\hat{Q}^n}$ as the following: 
    \begin{equation*}
    \begin{aligned}
        \sum_{k=1}^K \abs{\hat{Q}_k^n(t)} 
        \leq B_T^n + c_0 \int_0^t \sum_{k=1}^K \abs{\hat{Q}_k^n(s)} ds + K\abs{\hat{R}^n(t)}, 
    \end{aligned}
    \end{equation*}
    assuming $\sup_{1\leq k\leq K} (\delta_k^n)\leq c_0$ for some constant $c_0>0$ and for all $n>0$ since $\lim_{n\to\infty}\delta_i^n = \delta_i$. 
    
    Now it suffices to consider the last term on the right-hand side. 
    Notice that \eqref{rewritten hat R} suggests that for any $k\in\{1, \cdots, K\}$, 
    \begin{equation*}
    \begin{aligned}
        \hat{R}^n(t) 
        &\leq \left\lvert\hat{Q}_k^n(0) + \hat{A}_k^n(t) + \frac{\lambda_k^n - \lambda_0 n}{\sqrt{n}}t - \hat{M}_k^n(t) - \delta_k^n \int_0^t \hat{Q}_k^n(s)ds\right\rvert\\
        &\leq B_T^n + \sum_{k=1}^K \delta_k^n \int_0^t \abs{\hat{Q}_k^n(s)}ds. 
    \end{aligned}
    \end{equation*}
    The first inequality holds for all $k\in\{1, \cdots, K\}$ since the scalar-valued process $\hat{R}^n$ is defined to be the minimum value as described in \eqref{rewritten hat R}. Moreover, \eqref{rewritten hat R} also suggests an identical upper bound. 
    These implies an upper bound of $\abs{\hat{R}^n(t)}$, namely
    \begin{equation}
        \abs{\hat{R}^n(t)}
        \leq B_T^n + \sum_{k=1}^K \delta_k^n\int_0^t \abs{\hat{Q}_k^n(s)}ds.
        \label{upper bound of hat R}
    \end{equation}
    This together with previous inequalities of $\sum_{k=1}^K \abs{\hat{Q}_k^n(t)}$, we further have
    \begin{equation*}
    \begin{aligned}
        \sum_{k=1}^K \abs{\hat{Q}_k^n(t)}
        \leq (1+K)B_T^n + c_0(1+K) \int_0^t \sum_{k=1}^K  \abs{\hat{Q}_k^n(s)}ds.
    \end{aligned}
    \end{equation*}
    We apply the Gronwall's inequality to function $t\mapsto\sum_{k=1}^K \abs{\hat{Q}_k^n(t)}$ to obtain
    \begin{equation}
       \sum_{k=1}^K\abs{\hat{Q}_k^n(t)} \leq (1+K)B_T^n \exp{\left(c_0(1+K) T \right)},  
    \end{equation}
    which further yields
    \begin{equation}
        \|\vect{\hat{Q}^n}(t)\| = \left(\sum_{k=1}^K \abs{\hat{Q}_k^n(t)}^2\right)^{\frac{1}{2}} \leq \sum_{k=1}^K \abs{\hat{Q}_k^n(t)} \leq (1+K)B_T^n \exp{\left(c_0(1+K) T \right)}. 
    \end{equation}
    Consequently, we have the moment-bound result: 
    \begin{equation}
        E\left[\|\vect{\hat{Q}^n}\|_T^2\right] \leq (1+K)^2 C_2(1+T^b) \exp{\left(2c_0(1+K) T \right)}, 
        \label{moment bound for hat Q}
    \end{equation}
    which further implies \eqref{moment condition on hat Q}. 
    The stochastic boundedness follows by employing Chebyshev's inequality. This completes the proof. 
\end{proof}


\subsection{Proof of Theorem \ref{continuity of integral representation}}
\label{sec: continuity of integral representation}

\begin{proof}[Proof of Theorem \ref{continuity of integral representation}]
    For brevity, we intend to consider the case of $h(\vect{x}(t)) = (\delta_1 x_1(t), \delta_2 x_2(t), \cdots, \delta_K x_K(t))^\intercal$ for $t\geq0$, which is a special case of the integral term in the heavy traffic limit obtained in \eqref{limiting process (model with abandonment)}. 
    For fixed $\vect{y}(t)\in D^K[0, T]$, we define a functional $M: D^K[0, T]\to D^K[0, T]$ by
    \begin{equation}
        M(\vect{x}(t)) = \vect{y}(t) - \int_0^t h(\vect{x}(s))ds - R(t)\vect{I}, 
        \label{functional M}
    \end{equation}
    where $R(\cdot)$ is defined in \eqref{definition of R functional version}. 
    To demonstrate the existence of a unique solution, it suffices to show $M$ is a contraction mapping on $ D^K[0, T]$ embedded with the uniform topology. 
    Suppose there are two solutions to the integral representation \eqref{integral representation}, namely $\vect{x}^{(1)}(t)$ and $\vect{x}^{(2)}(t)$ for $t\geq0$. 
    Accordingly, we have
    \begin{equation}
        R^{(k)}(t) = \Psi(\vect{x}^{(k)}, \vect{y})(t)= \min_{1\leq j \leq K} \left\{y_j(t) - \int_0^t \delta_j x^{(k)}_j(s)ds\right\}, 
        \label{R1 and R2 definition}
    \end{equation}
    for $k = 1, 2$. 
    The functional $M$ defined in \eqref{functional M} suggests
    \begin{equation*}
    \begin{aligned}
        \|M(\vect{x}^{(1)}(t)) - M(\vect{x}^{(2)}(t)) \| 
        &\leq \sum_{k=1}^K \delta_k \int_0^t \left\lvert x_k^{(1)}(s) - x_k^{(2)}(s)\right\rvert ds + K\left\lvert R^{(1)}(t) - R^{(2)}(t) \right\rvert. 
    \end{aligned}
    \end{equation*}
    The complication comes from the second term $\abs{R^{(1)}(t) - R^{(2)}(t)}$. To find an upper bound, we assume there exists some $l\in[1, K]$ depends on $t$ so that it achieves the minimum in $R^{(2)}(t)$. By \eqref{R1 and R2 definition}, we have for $t\in[0, T]$,
    \begin{equation*}
    \begin{aligned}
        R^{(1)}(t) - R^{(2)}(t) 
        &\leq y_l(t) - \int_0^t \delta_l x^{(1)}_j(s)ds - \left(y_l(t) - \int_0^t \delta_l x^{(2)}_l(s)ds\right)\\
        &\leq \left\lvert \int_0^t \delta_l\left(x^{(1)}_l(s) - x^{(2)}_l(s)\right)ds\right\rvert \\
        &\leq \sup_{1\leq j\leq K} (\delta_j) \cdot \int_0^t \sum_{j=1}^K \left\lvert x^{(1)}_j(s) - x^{(2)}_j(s)\right\rvert ds.
    \end{aligned}
    \end{equation*}
    Similarly, we can obtain an identical upper bound for $R^{(2)}(t) - R^{(1)}(t)$. 
    Thus, 
    \begin{equation}
        \left\lvert R^{(1)}(t) - R^{(2)}(t)\right\rvert \leq \sup_{1\leq k\leq K} (\delta_k) \cdot \int_0^t \sum_{k=1}^K \left\lvert x^{(1)}_k(s) - x^{(2)}_k(s)\right\rvert ds.
        \label{upper bound for abs{Z1-Z2}}
    \end{equation}
    Therefore, we have
    \begin{equation*}
    \begin{aligned}
        \|M(\vect{x}^{(1)}(t))- M(\vect{x}^{(2)}(t))\| 
        &\leq (1+K) \sup_{1\leq k \leq K} (\delta_k) \int_0^t\sum_{k=1}^K \left\lvert x^{(1)}_k(s) - x^{(2)}_k(s)\right\rvert ds\\
        &\leq \epsilon(T) \|x^{(1)}(t) - x^{(2)}(t)\|_T, 
    \end{aligned}
    \end{equation*}
    where $\epsilon(T) = (1+K)\sqrt{K}\left(\sup_{1\leq k \leq K} (\delta_k)\right)T$. 
    This yields
    \begin{equation*}
        \|M(\vect{x}^{(1)}(t))- M(\vect{x}^{(2)}(t))\|_T \leq \epsilon(T) \|x^{(1)}(t) - x^{(2)}(t)\|_T, 
    \end{equation*}
    One may pick $T_1>0$ such that $\epsilon(T_1)<1$, and then the functional $M$ formulates a contraction mapping for $t\in[0, T_1]$ on $ D^K[0, T_1]$ with uniform topology, which leads to the existence of a unique solution to \eqref{integral representation} by the Banach fixed-point theorem. 
    If we partition the time interval $[0, T]$ into several length $T_1$ subintervals, we can apply the above arguments on each one of those length $T_1$ subintervals to obtain a unique solution for all $t\in[0, T]$. These guarantees a unique solution $\vect{x}\in D^K[0, T]$ to the fixed point problem $M(\vect{x}) = \vect{x}$. 
    
    The continuity of $f$ can be deduced by considering $\|f(\vect{y}(t_n)) - f(\vect{y}(t))\|$. Analogous to previous discussions, we end up with the following inequality: 
    \begin{equation*}
    \begin{aligned}
        \|\vect{x}(t_n) - \vect{x}(t)\| &= \|f(\vect{y}(t_n)) - f(\vect{y}(t))\|\\
        &\leq (1+K)\sqrt{K} \|\vect{y}(t_n) - \vect{y}(t)\| + (1+K)\sqrt{K} \sup_{1\leq k\leq K} (\delta_k) \int_t^{t_n} \|\vect{x}(s)\|ds. 
    \end{aligned}
    \end{equation*}
    If we impose the boundedness for $\vect{x}(\cdot)$, $\vect{x}$ is continuous if $\vect{y}$ is continuous. 
\end{proof}


\subsection{Proof of Corollary \ref{L^p extension}}
\label{sec: L^p extension}
\begin{proof}[Proof of Corollary \ref{L^p extension}]
The proof of this extension relies on verifying the uniform integrability of a proper integrand. 
Since \eqref{initial state limit}, \eqref{regime}, and \eqref{weak convergence of hat A}, we have  $\vect{\xi^n}\Rightarrow\vect{\xi}$ in $ D^K[0, T]$ as $n\to\infty$. 
By Skorokhod's representation theorem, we can simply assume that $\vect{\xi^n}$ converges to $\vect{\xi}$ a.s. in some special probability space. 
For given $\vect{\xi^n}$ and $\vect{\xi}$ in conjunction with Theorem \ref{continuity of integral representation}, we obtain $\vect{\hat{Q}^n}$ and $\vect{X}$ associated with the corresponding input processes $\vect{\xi^n}$ and $\vect{\xi}$ so that they solve \eqref{integral representation}, respectively. Therefore, we have 
\begin{equation}
\begin{aligned}
    \sum_{j=1}^K \abs{\hat{Q}_j^n(t) - X_j(t)} 
    \leq \sum_{j=1}^K \abs{\xi_j^n(t) - \xi_j(t)} + \int_0^t \sum_{j=1}^K \abs{\delta_j^n\hat{Q}_j^n(s) - \delta_jX_j(s)}ds + K\abs{\hat{R}^n(t) - R(t)}. 
\end{aligned}
\label{upper bound of hat Q - X}
\end{equation}
To find an upper bound, the difficulty also comes from the last term $\abs{\hat{R}^n(t) - R(t)}$. To this end, it suffices to find an upper bound for $\abs{\hat{R}^n(\cdot) - \hat{R}(\cdot)}$. Consider two differences without absolute value separately. 
We assume that there exist indices $l_1$ and $l_2$ depend on $t$ such that the minimum entry in $\hat{R}^n(t)$ is attained at $l_1$ and the minimum entry in $R(t)$ is attained at $l_2$.  
Hence, 
\begin{equation*}
\begin{aligned}
    \hat{R}^n(t) - R(t) &= \min_{1\leq k\leq K}\left\{\xi_k^n(t) - \int_0^t \delta_k^n\hat{Q}_k^n(s)ds\right\} - \min_{1\leq k\leq K} \left\{\xi_k(t) - \int_0^t \delta_k X_k(s)ds\right\} \\
    &\leq \xi_{l_2}^n(t) - \int_0^t \delta_{l_2}^n\hat{Q}_{l_2}^n(s)ds - \left(\xi_{l_2}(t) - \int_0^t \delta_{l_2} X_{l_2}(s)ds\right) \\
    &\leq \abs{\xi_{l_2}^n(t) - \xi_{l_2}(t)} + \int_0^t \abs{\delta_{l_2}^n\hat{Q}_{l_2}^n(s) - \delta_{l_2} X_{l_2}(s)}ds \\
    &\leq \sum_{j=1}^K \abs{\xi_{j}^n(t) - \xi_{j}(t)} + \int_0^t \sum_{j=1}^K\abs{\delta_{j}^n\hat{Q}_{j}^n(s) - \delta_{j} X_{j}(s)}ds. 
\end{aligned}
\end{equation*}
Notice that the first inequality holds since $\hat{R}^n(t) \leq \xi_k^n(t) - \int_0^t \delta_k^n\hat{Q}_k^n(s)ds$ for any $k\in\{1, \cdots, K\}$ and $t\geq0$. 
Similarly, we can obtain an upper bound for $R(t) - \hat{R}^n(t)$. 
Consequently, we have the following upper bound: 
\begin{equation}
    \abs{\hat{R}^n(t) - R(t)} \leq \sum_{j=1}^K \abs{\xi_{j}^n(t) - \xi_{j}(t)} + \int_0^t \sum_{j=1}^K\abs{\delta_{j}^n\hat{Q}_{j}^n(s) - \delta_{j} X_{j}(s)}ds. 
\end{equation}
This fact and \eqref{upper bound of hat Q - X} suggest that
\begin{equation*}
\begin{aligned}
    &\|\vect{\hat{Q}^n}(t) - \vect{X}(t)\| \\
    &\leq (1+K)\left(\sum_{j=1}^K \abs{\xi_{j}^n(t) - \xi_{j}(t)} + \int_0^t \sum_{j=1}^K\abs{\delta_{j}^n\hat{Q}_{j}^n(s) - \delta_{j} X_{j}(s)}ds\right) \\
    &\leq (1+K)\sqrt{K} \left[\left(\sum_{j=1}^K \abs{\xi_{j}^n(t) - \xi_{j}(t)}^2\right)^{\frac{1}{2}} + \int_0^t \left(\sum_{j=1}^K\abs{\delta_{j}^n\hat{Q}_{j}^n(s) - \delta_{j} X_{j}(s)}^2\right)^{\frac{1}{2}}ds\right] \\
    &\leq (1+K)\sqrt{K} \left(\|\vect{\xi^n}(t) - \vect{\xi}(t)\| + C_0 \int_0^t \|\vect{\hat{Q}^n}(s) - \vect{X}(s)\| ds\right), 
\end{aligned}
\end{equation*}
where we assume $\left(\sup_{1\leq k\leq K} \delta_k^n\right) \vee \left(\sup_{1\leq k\leq K} \delta_k\right) \leq C_0$ for some $C_0$ positive constant. 
Using the Gronwall's inequality, we obtain
\begin{equation}
    \|\vect{\hat{Q}^n}(t) - \vect{X}(t)\| \leq (1+K)\sqrt{K} \|\vect{\xi^n}- \vect{\xi}\|_T e^{(1+K)\sqrt{K}C_0 t}. 
    \label{upper bound for abs{hat Q - X}}
\end{equation}

Now, if we have the convergence of the right-hand side of \eqref{upper bound for abs{hat Q - X}}, it is straightforward to show the convergence of the left-hand side term. 
Notice that we have assumed almost sure convergence of $\vect{\xi^n}$, which further yields $\|\vect{\xi^n} - \vect{\xi}\|_T \to 0$ in probability as $n\to\infty$. 
We intend to show the convergence also holds in $L^p[0, T]$ for $1\leq p<2l$ where $l\geq1$ is any constant. That is the convergence holds for any $p\geq 1$. 
Here since we need to find higher moment bounds for appropriate processes in the proof, we tend to present constant $l$ for generality. 
Then, Vitali's convergence theorem suggests that if the $p$th order integrand is uniformly integrable and in conjunction with convergence in probability, it is straightforward to conclude the convergence in $L^p[0, T]$. 

We are left to show the uniform integrability. 
It is trivial that 
\begin{equation}
    E\left[\|\vect{\xi^n} - \vect{\xi}\|_T^{2l}\right] \leq c E\left[\|\vect{\xi^n}\|_T^{2l} + \|\vect{\xi}\|_T^{2l}\right], 
    \label{eq: E[ norm(vect(xi^n) - vect(xi)]}
\end{equation}
where $c>0$ is a generic constant, and we intend to find a moment bound for those two terms separately. 
Since the moment bound of the second term can be derived by the moment bound of the first term with the help of Fatou's lemma, it suffices to consider $E\left[\|\vect{\xi^n}\|_T^{2l}\right]$. 
\eqref{regime} and \eqref{moment condition on hat A} suggest
\begin{equation*}
\begin{aligned}
    E\left[\|\vect{\xi^n}\|_T^{2l}\right] 
    \leq c \left(1 + T^{2l} + E\left[\|\vect{\hat{A}^n}\|_T^{2l}\right] + E\left[\|\vect{\hat{M}^n}\|_T^{2l}\right]\right), 
\end{aligned}
\end{equation*}
where $c>0$ is a generic constant depends on $K$. 
Let $e=\{e(t):= t, t\geq0\}$ be the identity map. 
Since the centered and scaled arrival processes $\{\hat{A}_j^n\}_{1\leq j\leq K}$ are independent Poisson processes as assumed in Assumption 2, and $A_j^n - \lambda_j^n e$ is a $(\mathcal{F}_t^n)_{t\geq0}$ adapted martingale for each $j\in\{1, \cdots, K\}$, the Burkholder's inequality (see \cite{pang2007martingale}) renders
\begin{equation*}
\begin{aligned}
    E\left[\|\hat{A}_j^n\|_T^{2l}\right] &= \frac{1}{n^{l}}E\left[\|A_j^n - \lambda_j^n e\|_T^{2l}\right] \leq \frac{1}{n^l} E\left[\left([A_j^n - \lambda_j^n e, A_j^n - \lambda_j^n e](T)\right)^l\right]. 
\end{aligned}
\end{equation*}
The quadratic variation of compensated Poisson process implies $[A_j^n - \lambda_j^n e, A_j^n - \lambda_j^n e](T) = A_j^n(T)$ and $E\left[(A_j^n(T))^l\right]\leq c(\lambda_j^n T)^l$. As a consequence, 
\begin{equation}
    \sup_{n\geq1}E\left[\|\hat{A}_j^n\|_T^{2l}\right] \leq c T^l, 
    \label{higher order moment bound for hat A}
\end{equation}
where $c>0$ is a generic constant independent of $T$ and $n$. 
Similarly, since $\hat{M}_j^n$ is also a $(\mathcal{F}_t^n)_{t\geq0}$-martingale for each $j\in\{1, \cdots, K\}$ and analogous to the proof of Proposition \ref{SB for hat M}, the Burkholder's inequality yields
\begin{equation*}
    E\left[\|\hat{M}_j^n\|_T^{2l}\right]\leq c E\left[\left([\hat{M}_j^n, \hat{M}_j^n](T)\right)^l\right], 
\end{equation*}
where $c>0$ is a generic constant. 
Hence, since $Q_j^n(0)$ is deterministic and using \eqref{quadratic variation process}, a crude inequality $Q_j^n(s)\leq Q_j^n(0) + A_j^n(s)$ implies
\begin{equation*}
\begin{aligned}
    E\left[\|\hat{M}_j^n\|_T^{2l}\right] &\leq \frac{c}{n^l} E\left[\left(N_j\left(\delta_j^n\int_0^T Q_j^n(s)ds\right)\right)^l\right] \\
    &\leq \frac{c}{n^l} E\left[\left(N_i\left(\delta_i^n T (Q_j^n(0) + A_j^n(T))\right)\right)^l\right] \\
    &= \frac{c}{n^l} E\left[ E\left[\left(N_i(\delta_i^n T (Q_j^n(0) + A_j^n(T)))\right)^l \vert Q_j^n(0) + A_j^n(T)\right]\right] \\
    &\leq \frac{c}{n^l} T^l \left((Q_j^n(0))^l + E\left[(A_j^n(T))^l\right]\right) \\
    &\leq c T^l \left(1 + T^l\right),  
\end{aligned}
\end{equation*}
where $c>0$ is a generic constant. 
Therefore, we obtain the $(2l)$th moment bound condition
\begin{equation}
    \sup_{n\geq1} E\left[\|\vect{\xi^n}\|_T^{2l}\right] \leq c(1+T^d), 
    \label{higher order moment bound for input}
\end{equation}
where $c>0$ is a generic constant, and $c$ and $d\geq 2l\geq 2$ are both constants independent of $T$ and $n$. 
Hence, using \eqref{eq: E[ norm(vect(xi^n) - vect(xi)]}, we have
\begin{equation}
   \sup_{n\geq1} E\left[\|\vect{\xi^n} - \vect{\xi}\|_T^{2l}\right] \leq c(1+T^d), 
\end{equation}
where $c>0$ is a generic constant and $c$ and $d\geq 2l \geq 2$ are independent of $T$ and $n$. 
This implies the uniform integrability of $\|\vect{\xi^n} - \vect{\xi}\|_T^p$ for $1\leq p < 2l$. 
As a consequence, $E\left[\|\vect{\xi^n} - \vect{\xi}\|_T^p\right] \to 0$ as $n\to\infty$ on a special probability space. 
Using \eqref{upper bound for abs{hat Q - X}}, we further obtain $E\left[\|\vect{\hat{Q}^n} - \vect{X}\|_T^p\right] \to 0$. 
This completes the proof. 
\end{proof}
\subsection{Proof of Proposition \ref{Theorem semimartingale property}}
\label{sec: Proof of semimartingale property}

\begin{proof}[Proof of Proposition \ref{Theorem semimartingale property}]
To avoid redundant algebraic manipulations and for brevity, we intend to demonstrate the element indexed by $i=1$ with $K=4$ case, and other elements can be obtained by following the same fashion. 

To show the coupled process is a semimartingale, it suffices to prove that each component admits a semimartingale decomposition. 
The limiting processes in \eqref{limiting process (non-abandonment)} can be rewritten as
\begin{equation}
    X_i(t) = \xi_i(t) - \min\{\xi_1(t), \xi_2(t), \xi_3(t), \xi_4(t)\}, 
    \label{4 component processes (non-abandonment)}
\end{equation}
for $i=1, 2, 3, 4$ and $t\geq 0$, where 
\begin{equation}
    \xi_i(t) = X_i(0) + \beta_i t + \sigma_i W_i(t), 
    \label{input for limiting process (non-abandonmnet)}
\end{equation}
for each $i$ and $t\geq 0$. 
Consider the case of $i=1$. \eqref{4 component processes (non-abandonment)} further suggests that for $t\geq 0$, 
\begin{equation}
\begin{aligned}
    X_1(t) 
    &= \xi_1(t) + \max\{-\xi_1(t), \max\{-\xi_2(t), \eta_3(t)\}\}, 
\end{aligned}
\label{iteratively X_1 (non-abandonment)}
\end{equation}
where $\eta_3(t):= \max\{-\xi_3(t), -\xi_4(t)\}$. 
Observe that $\eta_3(t)$ can be further rewritten as
\begin{equation*}
    \eta_3(t) = -\xi_4(t) + (\xi_4(t) - \xi_3(t))^+. 
\end{equation*}
By utilizing Tanaka's formula (see Section 7.3 in \cite{chung1990introduction}), we apply It\^o's lemma to the function $f(x) = x^+$ for $x\in\mathbb{R}$ and obtain
\begin{equation*}
\begin{aligned}
    \eta_3(t) &= \max\{-\xi_3(t), -\xi_4(t)\} = -\xi_4(t) + (\xi_4(t)-\xi_3(t))^+ \\
    &= - X_4(0) - \sigma_4 W_4(t) - \beta_4 t + (X_4(0)-X_3(0))^+ \\
    &\quad + \sqrt{\sigma_3^2 + \sigma_4^2} \int_0^t \mathbbm{1}_{[Y_3(s)>0]}dB_{34}(s) + (\beta_4-\beta_3)\int_0^t \mathbbm{1}_{[Y_3(s)>0]}ds + \frac{1}{2}L_t^{(1)}, 
\end{aligned}
\end{equation*}
where
\begin{equation}
    Y_3(t) := \xi_4(t) - \xi_3(t) = (X_4(0)- X_3(0)) + \sqrt{\sigma_3^2 + \sigma_4^2} B_{34}(t) + (\beta_4 - \beta_3)t, 
\end{equation}
and $L_t^{(1)}$ is the local time process for $Y_3(t)$ at the origin, which increases only at time $t$ when $\xi_3(t) = \xi_4(t)$. 
Here, $B_{34}(\cdot)$ is a Brownian motion depends on two independent standard Brownian motions $W_3$ and $W_4$ obtained in Proposition \ref{weak convergence theorem (non-abandonment)}. 
Similarly, let $\eta_2(t):= \max\{-\xi_2(t), \eta_3(t)\}$ in \eqref{iteratively X_1 (non-abandonment)} and Tanaka's formula again yields a similar expression with a new local time process. 
Following the same fashion, one can move on to the last layer $\max\{-\xi_1, \eta_2\}$, and iteratively, we can obtain a semimartingale decomposition. 
The decomposition for general $K \geq 2$ can be done similarly. 
\end{proof}

\subsection{Proof of Theorem \ref{little's law}}
\label{sec: Proof of little's law}

First, we prove some results of interest, which will play an important role in the proof of Little's law. Then, we present the proof of Theorem \ref{little's law}. 

\begin{corollary}
    Let $T>0$ and for each $i\in\{1, \cdots, K\}$, we have that $\hat{G}_i^n$ is stochastically bounded and $\hat{G}_i^n(\cdot)$ converges weakly to $\delta_i \int_0^{\cdot} X_i(s)ds$ in $ D[0, T]$ as $n\to\infty$. 
    \label{SB for hat G and weak convergence of hat G}
\end{corollary}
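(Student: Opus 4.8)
The plan is to leverage the martingale decomposition $\hat{G}_i^n(t) = \hat{M}_i^n(t) + \delta_i^n\int_0^t \hat{Q}_i^n(s)\,ds$, which follows directly from the definition \eqref{abandonment process} of $G_i^n$, the definition \eqref{hat M martingale} of $\hat{M}_i^n$, and the scaling \eqref{diffusion scales}; indeed $\hat{G}_i^n(t) = G_i^n(t)/\sqrt{n} = \hat{M}_i^n(t) + \delta_i^n\int_0^t Q_i^n(s)\,ds/\sqrt{n} = \hat{M}_i^n(t) + \delta_i^n\int_0^t \hat{Q}_i^n(s)\,ds$. Thus the two terms to control are the martingale $\hat{M}_i^n$ and the scaled integral of the queue length.

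First I would establish stochastic boundedness. By Proposition \ref{SB for hat M}, $\{\hat{M}_i^n\}_{n\geq 1}$ is stochastically bounded, and by Proposition \ref{moment bound and stochastical boundedness of hat Q} the sequence $\{\hat{Q}_i^n\}_{n\geq 1}$ is stochastically bounded, in fact satisfying the moment bound \eqref{moment condition on hat Q}. Since $\delta_i^n \to \delta_i$, the term $\delta_i^n\int_0^t \hat{Q}_i^n(s)\,ds$ is bounded in absolute value on $[0,T]$ by $(\sup_n \delta_i^n)\, T\, \|\hat{Q}_i^n\|_T$, which is stochastically bounded as a product of a constant and a stochastically bounded sequence; hence $\|\hat{G}_i^n\|_T$ is stochastically bounded by the triangle inequality.

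Next I would prove the weak convergence. By Theorem \ref{weak convergence theorem}, $\vect{\hat{Q}^n} \Rightarrow \vect{X}$ in $D^K[0,T]$; combining this with the joint convergence of the driving terms (in particular $\hat{M}_i^n \Rightarrow 0$, which follows from $\hat{M}_i^n$ being a martingale whose quadratic variation $\langle \hat{M}_i^n\rangle(t) = (\delta_i^n/n)\int_0^t Q_i^n(s)\,ds = (\delta_i^n/\sqrt{n})\int_0^t \hat{Q}_i^n(s)\,ds$ tends to zero uniformly on $[0,T]$ in probability by the stochastic boundedness of $\hat{Q}_i^n$, so that $\|\hat{M}_i^n\|_T \to 0$ in probability by Doob's inequality), and using the continuous mapping theorem applied to the map $x \mapsto \int_0^\cdot x(s)\,ds$, which is continuous from $D[0,T]$ with the uniform (or $J_1$) topology into $C[0,T]$, I conclude that $\delta_i^n\int_0^\cdot \hat{Q}_i^n(s)\,ds \Rightarrow \delta_i\int_0^\cdot X_i(s)\,ds$. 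Since $X_i$ has continuous paths this integral is well-defined, and adding back $\hat{M}_i^n \Rightarrow 0$ gives $\hat{G}_i^n \Rightarrow \delta_i\int_0^\cdot X_i(s)\,ds$ in $D[0,T]$ by the convergence-together lemma.

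The main obstacle is the bookkeeping needed to pass from weak convergence of $\hat{Q}_i^n$ to weak convergence of $\delta_i^n\int_0^\cdot \hat{Q}_i^n(s)\,ds$ with the $n$-dependent coefficient $\delta_i^n$: one must argue that replacing $\delta_i^n$ by its limit $\delta_i$ introduces an error bounded by $|\delta_i^n - \delta_i|\, T\, \|\hat{Q}_i^n\|_T$, which vanishes in probability because $|\delta_i^n - \delta_i| \to 0$ and $\|\hat{Q}_i^n\|_T$ is stochastically bounded. Once this is in place, everything else is a routine application of the continuous mapping theorem and Slutsky-type arguments, all of which rely only on results already established earlier in the paper.
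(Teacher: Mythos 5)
Your proposal is correct and follows essentially the same route as the paper: the martingale decomposition $\hat{G}_i^n = \hat{M}_i^n + \delta_i^n\int_0^\cdot \hat{Q}_i^n(s)\,ds$, stochastic boundedness from Propositions \ref{SB for hat M} and \ref{moment bound and stochastical boundedness of hat Q}, the observation that $\|\hat{M}_i^n\|_T \to 0$ in probability via a maximal inequality applied to the vanishing bracket (the paper uses Burkholder where you use Doob, but these yield the same expectation bound), and the continuous mapping theorem for the integral term. The one place where you should tighten the wording is the passage "the quadratic variation tends to zero uniformly in probability, so $\|\hat{M}_i^n\|_T\to 0$ in probability by Doob's inequality" — Doob requires a bound on $E[\langle\hat{M}_i^n\rangle(T)]$ rather than only convergence in probability of the bracket, but since $E[\langle\hat{M}_i^n\rangle(T)] \le (\delta_i^n T/\sqrt{n})\,E[\|\hat{Q}_i^n\|_T] \to 0$ is available from Proposition \ref{moment bound and stochastical boundedness of hat Q}, this is a phrasing issue rather than a gap.
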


\begin{proof}
We prove the result for the $i$th queue, and other queues can be proved in a very similar approach. 
By \eqref{hat M martingale}, we have
\begin{equation}
    \hat{G}_i^n(t) = \hat{M}_i^n(t) + \delta_i^n\int_0^t \hat{Q}_i^n(s)ds, 
\end{equation}
for $t\geq 0$. 
Using Proposition \ref{SB for hat M} and \ref{moment bound and stochastical boundedness of hat Q}, we derive the second moment bound result: 
\begin{equation*}
\begin{aligned}
    E\left[\|\hat{G}_i^n\|_T^2\right] 
    &\leq 2\left(E\left[\|\hat{M}_i^n\|_T^2\right] + (\delta_i^n)^2 T^2 E\left[\|\hat{Q}_i^n\|_T^2\right]\right) \\
    &\leq 2C_1(1+T^l) + 2C^2 K(1+K)^2 C_2 T^2 (1+T^b) \exp{(2C(1+K)T)}, 
\end{aligned}
\end{equation*}
where $C$, $C_1$, $C_2$, $l$, and $b$ are constants independent of $T$ and $n$ as described in \eqref{moment bound for hat M} and \eqref{moment bound for hat Q}. 
Therefore, using the Chebyshev's inequality, we have $\lim_{a\to\infty}\limsup_{n\to\infty} P\left[\|\hat{G}_i^n\|_T>a\right] = 0$. 

Next, we show the weak convergence. 
Since $\hat{M}_i^n$ is a $\mathcal{F}^n$-martingale by Lemma \ref{prove hat M is a martingale} and as in the proof of Proposition \ref{SB for hat M}, using the Burkholder's inequality, we have
\begin{equation*}
    E\left[\|\hat{M}_i^n\|_T^2\right] \leq C E\left[\delta_i^n \int_0^T \bar{Q}_i^n(s)ds\right]
    \leq \frac{C\delta_i^n T}{\sqrt{n}}\left(E\left[\|\hat{Q}_i^n\|_T^2\right]\right)^{\frac{1}{2}}. 
\end{equation*}
Using the moment bound result of $\hat{Q}_i^n$ in Proposition \ref{moment bound and stochastical boundedness of hat Q} and we have assumed that $\lim_{n\to\infty}\delta_i^n = \delta_i>0$, we obtain $E\left[\|\hat{M}_i^n\|_T^2\right]$ converges to zero as $n\to\infty$. By Chebyshev's inequality, we further have $\|\hat{M}_i^n\|_T$ converges to zero in probability as $n\to\infty$. 
Since Theorem \ref{weak convergence theorem} implies the weak convergence of $\hat{Q}_i^n$ in $ D[0, T]$, the continuity of integral mappings further suggests that $\delta_i^n\int_0^{\cdot} \hat{Q}_i^n(s)ds$ converges weakly to $\delta_i\int_0^{\cdot} X_i(s) ds$ in $ D[0, T]$. As a consequence, $\hat{G}_i^n(\cdot)$ converges weakly to $\delta_i\int_0^{\cdot} X_i(s) ds$ in $ D[0, T]$ as $n\to\infty$. 
\end{proof}

Now, with the facts obtained above, we are ready to see some crucial properties for the virtual waiting time processes introduced in \eqref{virtual waiting time}. 

\begin{proposition}
    Under the assumptions of Theorem \ref{weak convergence theorem} and for each $i\in\{1, \cdots, K\}$, we have that $\hat{V}_i^n$ is stochastically bounded and consequently, $\|V_i^n\|_T\to0$ in probability as $n\to\infty$. 
    \label{SB for hat V}
\end{proposition}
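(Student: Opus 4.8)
The plan is to compare $V_i^n$ with a stopping time that can be read off from the matching completion process $R^n$, and then to invoke the stochastic boundedness of $\hat{Q}^n$ and $\hat{R}^n$ already at hand. Write $V_i^n(t) = \tau_i^n(t) - t$, where $\tau_i^n(t)$ is the epoch at which a hypothetical infinite-patience category-$i$ arrival at time $t$ gets matched. Index the category-$i$ components by arrival order, counting the $Q_i^n(0)$ initial ones first (these never abandon by Assumption 1), so the hypothetical arrival receives index $m := Q_i^n(0) + A_i^n(t) + 1$. Under FCFM the matched category-$i$ components occur in strictly increasing order of index, a component that abandons is simply skipped, and the hypothetical arrival has infinite patience; hence as soon as a match of index $\ge m$ has been recorded, the hypothetical arrival must itself have been matched. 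Since there are exactly $m-1$ components of index $< m$, once $R^n$ has recorded $m$ matches at least one of them is of index $\ge m$, and therefore
\[
  \tau_i^n(t) \;\le\; \sigma_i^n(t) \;:=\; \inf\bigl\{ u \ge t : R^n(u) \ge Q_i^n(0) + A_i^n(t) + 1 \bigr\}.
\]
I expect this sample-path bound — correctly tracking how abandonment interacts with the FCFM matching and the scalar-valued $R^n$ — to be the main obstacle; the remaining steps are estimates.

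Next I would pass to diffusion scale. Since $R^n$ increases only by unit jumps, on $\{\sigma_i^n(t) < \infty\}$ we have $R^n(\sigma_i^n(t)) = Q_i^n(0) + A_i^n(t) + 1$; substituting $R^n(u) = \lambda_0 n u + \sqrt{n}\,\hat{R}^n(u)$, $A_i^n(t) = \lambda_i^n t + \sqrt{n}\,\hat{A}_i^n(t)$ and $Q_i^n(0) = \sqrt{n}\,\hat{Q}_i^n(0)$ and rearranging gives
\[
  \hat{V}_i^n(t) = \sqrt{n}\,(\tau_i^n(t)-t)
  \;\le\; \sqrt{n}\,(\sigma_i^n(t)-t)
  \;=\; \frac{1}{\lambda_0}\Bigl( \hat{Q}_i^n(0) + \hat{A}_i^n(t) + \frac{\lambda_i^n - \lambda_0 n}{\sqrt{n}}\,t - \hat{R}^n(\sigma_i^n(t)) \Bigr) + \frac{1}{\lambda_0\sqrt{n}}.
\]
Every term on the right except $\hat{R}^n(\sigma_i^n(t))$ is already known to be stochastically bounded on $[0,T]$ by \eqref{initial state limit}, \eqref{regime} and \eqref{weak convergence of hat A}; the only difficulty is that $\sigma_i^n(t)$ may exceed $T$.

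To handle this, fix $T' := T+1$ and set $E_n := \{\, R^n(T') \ge Q_i^n(0) + A_i^n(T) + 1 \,\}$. On $E_n$ one has $\sigma_i^n(t) \le T'$ for every $t \in [0,T]$, hence $-\hat{R}^n(\sigma_i^n(t)) \le \|\hat{R}^n\|_{T'}$, and $\|\hat{R}^n\|_{T'}$ is stochastically bounded by the bound \eqref{upper bound of hat R} together with Proposition \ref{moment bound and stochastical boundedness of hat Q} and the estimate on $B_{T'}^n$ in \eqref{assume an upper bound for input}. Moreover $P(E_n) \to 1$: rewriting $E_n$ as $\{\, \lambda_0 n \ge \sqrt{n}\, Z_n + 1 \,\}$ with $Z_n := \hat{Q}_i^n(0) + \hat{A}_i^n(T) + \frac{\lambda_i^n - \lambda_0 n}{\sqrt{n}}T - \hat{R}^n(T')$ a stochastically bounded sequence, the claim follows from $\lambda_0 n \to \infty$. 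Combining the displayed bound on $E_n$ with $P(E_n) \to 1$ shows that, off an event of vanishing probability, $\|\hat{V}_i^n\|_T = \sup_{0 \le t \le T} \hat{V}_i^n(t)$ is dominated by a stochastically bounded sequence; hence $\hat{V}_i^n$ is stochastically bounded. Finally $\|V_i^n\|_T = \|\hat{V}_i^n\|_T/\sqrt{n}$, so the stochastic boundedness of $\hat{V}_i^n$ immediately yields $\|V_i^n\|_T \to 0$ in probability, which is the stated consequence.
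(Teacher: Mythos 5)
Your proof is essentially correct and takes a genuinely different route from the paper's. The paper argues by contraposition from $V_i^n(t) > M/\sqrt{n}$: since the hypothetical arrival blocks all post--$t$ arrivals to queue $i$ under FCFM, the queue at time $t+M/\sqrt{n}$ must contain at least $A_i^n(t+M/\sqrt{n}) - A_i^n(t) - \mathring{G}_i^n$ of them; after scaling this reduces $P[\|\hat{V}_i^n\|_T>M]$ to tail bounds for $\hat{Q}_i^n$ (Proposition~\ref{moment bound and stochastical boundedness of hat Q}), the modulus of continuity of $\hat{A}_i^n$ (tightness from \eqref{weak convergence of hat A}), and $\hat{G}_i^n$ (Corollary~\ref{SB for hat G and weak convergence of hat G}). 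You instead dominate $V_i^n(t)$ by the hitting time of the matching completion $R^n$ to level $m = Q_i^n(0)+A_i^n(t)+1$, unwind $R^n(\sigma_i^n(t))=m$ to express $\sqrt{n}(\sigma_i^n(t)-t)$ in diffusion-scaled quantities, and bound $\hat{R}^n$ via \eqref{upper bound of hat R} on the event $E_n$ (whose probability tends to $1$). This trades the abandonment process $\hat{G}_i^n$ for the matching completion $\hat{R}^n$, and is a nice structural observation; it also avoids any appeal to the modulus of continuity of $\hat{A}_i^n$.

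The one place that needs more care is the sample-path bound $\tau_i^n(t)\le\sigma_i^n(t)$, which you flag as ``the main obstacle.'' As written, the argument slides between two different objects: the matched index sequence in the \emph{hypothetical} system (where your pigeonhole argument is valid) and the process $R^n$ of the \emph{actual} system (which appears in your definition of $\sigma_i^n(t)$). These are not a priori the same: inserting an infinite-patience component into queue $i$ changes the matching opportunities and hence the subsequent evolution of the other queues, so the hypothetical matching count $\tilde R^n$ need not coincide with $R^n$. The correct bridge is a coupling argument: until the instant $\tau_i^n(t)$ when the hypothetical is matched, it sits strictly behind all pre--$t$ category-$i$ components under FCFM, so the two systems (matches, abandonments, other queues) evolve identically; hence if $\tau_i^n(t)>\sigma_i^n(t)$ one would have $R^n(\sigma_i^n(t))=\tilde R^n(\sigma_i^n(t))\le m-1$, contradicting $R^n(\sigma_i^n(t))\ge m$. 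Spelling this out (including the observation that, because the hypothetical has infinite patience and does not contribute to the time-changed Poisson clock in \eqref{abandonment process}, the abandonment stream is unchanged) would close the gap. Note that the paper's own step (``$V_i^n(t)>M/\sqrt{n}$ suggests the queue is not empty and the inequality holds'') also quietly invokes the same blocking/coupling idea, so you are not worse off; you have simply isolated the crux more explicitly. Once that sample-path lemma is in place, the remaining diffusion-scale estimates and the $P(E_n)\to 1$ argument are correct.
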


\begin{proof}
This argument is similar to the idea of proving Proposition 4.4 in \cite{dai2010customer}. 
Let $M>0$ be arbitrary. If $0<M<\hat{V}_i^n(t)$ for some $t\in[0, T]$, then we have $V_i^n(t)>\frac{M}{\sqrt{n}}$, which suggests that the queue length of category $i$ at time $t+\frac{M}{\sqrt{n}}$ is not empty and 
\begin{equation}
    Q_i^n\left(t+\frac{M}{\sqrt{n}}\right) \geq A_i^n\left(t+\frac{M}{\sqrt{n}}\right)-A_i^n(t) - \mathring{G}_i^n\left(t+\frac{M}{\sqrt{n}}\right), 
\end{equation}
where $\mathring{G}_i^n\left(t, t+\frac{M}{\sqrt{n}}\right)$ represents the amount of abandoned components from the $i$th queue for those arrivals during $[t, t+\frac{M}{\sqrt{n}})$. It counts those abandoned items that arrived after time $t$ and abandoned before time $t+\frac{M}{\sqrt{n}}$. 
We further observe that the number of abandoned components among those arrivals is less than the number of abandoned components by time $t+\frac{M}{\sqrt{n}}$, namely $0\leq\mathring{G}_i^n\left(t+\frac{M}{\sqrt{n}}\right)\leq G_i^n\left(t+\frac{M}{\sqrt{n}}\right)$, since those arrivals before time $t$ may abandon the system during the time interval $[t, t+\frac{M}{\sqrt{n}})$. 
Therefore, together with a simple computation, we have a diffusion-scaled inequality: 
\begin{equation}
    \hat{Q}_i^n\left(t+\frac{M}{\sqrt{n}}\right) \geq \hat{A}_i^n\left(t+\frac{M}{\sqrt{n}}\right) - \hat{A}_i^n(t) + \frac{\lambda_i^n}{n} M - \hat{G}_i^n\left(t+\frac{M}{\sqrt{n}}\right). 
\end{equation}

Let $0<\delta<1$. 
Since we assumed $\lambda_i^n/n\to \lambda_0$ as $n\to\infty$ by \eqref{regime}, we can find an $\alpha>0$ and $N\geq 1$ so that for any $n\geq N$, we have $0<\frac{M}{\sqrt{n}}<\delta$ and $\frac{\lambda_i^n}{n}>3\alpha > 0$ hold. 
Hence, for any $n\geq N$, we have the following inclusion: 
\begin{equation}
    \left[\|\hat{V}_i^n\|_T > M\right] \subseteq \left[\left\lvert \hat{Q}_i^n\left(t+\frac{M}{\sqrt{n}}\right)\right\rvert + \left\lvert\hat{A}_i^n\left(t+\frac{M}{\sqrt{n}}\right) - \hat{A}_i^n(t)\right\rvert + \left\lvert\hat{G}_i^n\left(t+\frac{M}{\sqrt{n}}\right)\right\rvert> 3\alpha M\right]. 
\end{equation}
Therefore, 
\begin{equation*}
\begin{aligned}
    P\left[\|\hat{V}_i^n\|_T > M\right] 
    &\leq P\left[\left\lvert\hat{Q}_i^n\left(t+\frac{M}{\sqrt{n}}\right)\right\rvert >\alpha M\right] + P\left[\left\lvert\hat{A}_i^n\left(t+\frac{M}{\sqrt{n}}\right) - \hat{A}_i^n(t)\right\rvert > \alpha M\right] \\
    &\quad + P\left[\left\lvert\hat{G}_i^n\left(t+\frac{M}{\sqrt{n}}\right)\right\rvert > \alpha M\right] \\
    &\leq P\left[\|\hat{Q}_i^n\|_{T+1} >\alpha M\right] + P\left[\|\hat{A}_i^n(t) - \hat{A}_i^n(s)\|_{0<s<t<(s+\delta)\wedge(T+1)} > \alpha M\right] \\
    &\quad + P\left[\|\hat{G}_i^n\|_{T+1}> \alpha M\right]. 
\end{aligned}
\end{equation*}
Since the weak convergence of $\hat{A}_i^n$ in \eqref{weak convergence of hat A}, we can further obtain the tightness of $\hat{A}_i^n$ and it also satisfies $\lim_{\delta\to0}\limsup_{n\to\infty} P\left[\omega(\hat{A}_i^n, \delta, T) > \epsilon\right]=0$. 
Using this fact and together with Proposition \ref{moment bound and stochastical boundedness of hat Q} and Corollary \ref{SB for hat G and weak convergence of hat G}, we obtain stochastic boundedness of $\hat{V}_i^n$. Consequently, $\lim_{n\to\infty} \|V_i^n\|_T = 0$ in probability. 
\end{proof}

Now, we are ready to prove Theorem \ref{little's law}. 

\begin{proof}[Proof of Theorem \ref{little's law}]
We will prove the result in terms of category $i$ and the cases for other categories remain identical. 
Consider the state of the $i$th queue at time $t+V_i^n(t)$ for any $t\in[0, T]$. We observe that the queue length at time $t+V_i^n(t)$ equals the number of arrivals during $[t, t+V_i^n(t))$ minus the number of abandoned items among those arrivals and this relation can be characterized by the following equality: 
\begin{equation}
    Q_i^n(t+V_i^n(t)) = A_i^n(t+V_i^n(t)) - A_i^n(t) - \mathring{G}_i^n(t, t+V_i^n(t)), 
    \label{equality at t + V_i^n(t)}
\end{equation}
where $\mathring{G}_i^n(t, t+V_i^n(t))$ represents the amount of abandoned components who arrived after time $t$ and abandoned before $t+V_i^n(t)$. 
We scale both sides of \eqref{equality at t + V_i^n(t)} by $1/\sqrt{n}$ and with a simple algebraic manipulation, we can obtain
\begin{equation}
    \hat{Q}_i^n(t+V_i^n(t)) = \hat{A}_i^n(t+V_i^n(t)) - \hat{A}_i^n(t) + \frac{\lambda_i^n}{n}\hat{V}_i^n(t) - \hat{\mathring{G}}_i^n(t, t+V_i^n(t)),  
\end{equation}
where the diffusion-scaled $\hat{Q}_i^n$ and $\hat{A}_i^n$ are as defined in \eqref{diffusion scales}, and 
\begin{equation}
    \hat{\mathring{G}}_i^n(t, t+V_i^n(t)) := \frac{1}{\sqrt{n}} \mathring{G}_i^n(t, t+V_i^n(t)). 
\end{equation}

Consider the last term $\hat{\mathring{G}}_i^n$, we observe that 
\begin{equation}
    0\leq \hat{\mathring{G}}_i^n(t, t+V_i^n(t)) \leq \frac{1}{\sqrt{n}}\left(G_i^n(t+V_i^n(t)) - G_i^n(t)\right) = \hat{G}_i^n(t+V_i^n(t)) - \hat{G}_i^n(t), 
\end{equation}
since those who arrived before time $t$ may abandon right after time $t$ and still before time $t+V_i^n(t)$, and those abandoned items are not counted in $\mathring{G}_i^n(t, t+V_i^n(t))$. 
With this observation, we have
\begin{equation*}
\begin{aligned}
    &\|\hat{Q}_i^n(t+V_i^n(t)) - \lambda_0 \hat{V}_i^n(t)\|_T \\
    &\leq \|\hat{A}_i^n(t+V_i^n(t)) - \hat{A}_i^n(t)\|_T + \left\lvert\frac{\lambda_i^n}{n} - \lambda_0\right\rvert \|\hat{V}_i^n\|_T + \|\hat{G}_i^n(t+V_i^n(t)) - \hat{G}_i^n(t)\|_T. 
\end{aligned}
\end{equation*}
Since $\hat{A}_i^n$ satisfies \eqref{weak convergence of hat A} and using Corollary \ref{SB for hat G and weak convergence of hat G}, we have the tightness of $\hat{A}_i^n$ and $\hat{G}_i^n$, and they satisfy for any $\epsilon>0$, 
\begin{equation}
\begin{aligned}
    \lim_{\delta\to 0 }\limsup_{n\to\infty} P\left[\omega(\hat{A}_i^n, \delta, T)>\epsilon\right] &= 0, \\
    \lim_{\delta\to 0 }\limsup_{n\to\infty} P\left[\omega(\hat{G}_i^n, \delta, T)>\epsilon\right] &= 0. 
\end{aligned}
\end{equation}
Moreover, we assumed that $\lim_{n\to\infty}\abs{\lambda_i^n/n - \lambda_0} = 0$ by \eqref{regime}. Since $\hat{V}_i^n$ is stochastically bounded and as a consequence, $\|V_i^n\|_T\to 0$ in probability as proved in Proposition \ref{SB for hat V}, above facts imply that $\|\hat{Q}_i^n(t+V_i^n(t)) - \lambda_0\hat{V}_i^n(t)\|_T \to 0$ in probability as $n\to\infty$. 

Now, we are left to show $\|\hat{Q}_i^n(t+V_i^n(t)) - \hat{Q}_i^n(t)\|_T \to 0$ in probability. 
By Theorem \ref{weak convergence theorem}, we have the tightness of $\hat{Q}_i^n$, which also satisfies $\lim_{\delta\to0}\limsup_{n\to\infty} P\left[\omega(\hat{Q}_i^n, \delta, T)>\epsilon\right] = 0 $ for any $\epsilon>0$. Thus, it is straightforward to show the above relation together with the fact that $\|V_i^n\|_T \to 0$ in probability as proved in Proposition \ref{SB for hat V}. This completes the proof. 
\end{proof}


\subsection{Proof of Proposition \ref{SB of hat V (non-abandonment)}}
\label{sec: proof of SB of hat V (non-abandonment)}

\begin{proof}[Proof of Proposition \ref{SB of hat V (non-abandonment)}]
We prove the case for the $i$th queue, and other queues remain identical. 
Here, we first show the stochastic boundedness directly, and then we come back to prove the moment bound condition \eqref{moment bound for hat V and each t (non-abandonment)} for each $t\in[0, T]$ by utilizing the order-preserving property. 

First, we intend to show the stochastic boundedness. For $i$ fixed and let $M>0$ be arbitrary. If $0<M<\hat{V}_i^n(t)$ holds for some $t\in[0, T]$, we know that the queue length at time $t+\frac{M}{\sqrt{n}}$ is not empty, namely $Q_i^n\left(t+\frac{M}{\sqrt{n}}\right)>0$, and satisfies
\begin{equation}
    Q_i^n\left(t+\frac{M}{\sqrt{n}}\right) \geq A_i^n\left(t+\frac{M}{\sqrt{n}}\right) - A_i^n(t).
\end{equation}
With a simple algebraic manipulation by centering and scaling, we obtain a diffusion-scaled inequality
\begin{equation}
    \hat{Q}_i^n\left(t+\frac{M}{\sqrt{n}}\right) \geq \hat{A}_i^n\left(t+\frac{M}{\sqrt{n}}\right) - \hat{A}_i^n(t) + \frac{\lambda_i^n}{n} M.
\end{equation}

Let $0<\delta<1$ and since we assumed $\lambda_i^n/n\to\lambda_0$ as $n\to\infty$, we can find an $\alpha>0$ and $N\geq1$ such that for any $n\geq N$, we have $0<\frac{M}{\sqrt{n}}<\delta$ and $\frac{\lambda_i^n}{n} > 2\alpha > 0$ hold. Therefore, for any $n\geq N$, we have 
\begin{equation*}
\begin{aligned}
    P\left[\|\hat{V}_i^n\|_T > M\right] &\leq P\left[\left\lvert\hat{Q}_i^n\left(t+\frac{M}{\sqrt{n}}\right)\right\rvert + \left\lvert\hat{A}_i^n\left(t+\frac{M}{\sqrt{n}}\right) - \hat{A}_i^n(t)\right\rvert > 2\alpha M\right] \\
    &\leq P\left[\left\lvert\hat{Q}_i^n\left(t+\frac{M}{\sqrt{n}}\right)\right\rvert >\alpha M\right] + P\left[\left\lvert\hat{A}_i^n\left(t+\frac{M}{\sqrt{n}}\right) - \hat{A}_i^n(t)\right\rvert > \alpha M\right] \\
    &\leq P\left[\|\hat{Q}_i^n\|_{T+1} >\alpha M\right] + P\left[\|\hat{A}_i^n(t) - \hat{A}_i^n(s)\|_{0<s<t<(s+\delta)\wedge(T+1)} > \alpha M\right]. 
\end{aligned}
\end{equation*}
The weak convergence of $\hat{A}_i^n$ suggests the tightness of $\hat{A}_i^n$ and the convergence of the modulus of continuity operator of $\hat{A}_i^n$, i.e. $\lim_{\delta\to0}\limsup_{n\to\infty} P\left[\omega(\hat{A}_i^n, \delta, T) > \epsilon\right]=0$. 
Using these facts and the second moment bound condition of $\hat{Q}_i^n$, we have the stochastic boundedness for $\hat{V}_i^n$ and consequently, $\lim_{n\to\infty} \|V_i^n\|_T = 0$ in probability. 

Now, we are left to show the second moment bound result \eqref{moment bound for hat V and each t (non-abandonment)} for each $t\in[0, T]$. 
For each $t\in[0, T]$ fixed and given condition $A_i^n(t) = k$, we have $t_{ik}^n < t < t_{i, k+1}^n$ and $V_i^n(t) = (\max_{j\neq i} \{ t_{j, k+1}^n \} - t)^+$, where $t_{i, k}^n$ represents the arrival time of the $k$th component of category $i$ in the $n$th system. It can be defined as
$t_{i, k}^n = \sum_{j=1}^k \tau_{i, j}^n$, where $\tau_{i, j}^n$'s are inter-arrival times. 
Notice that for a hypothetical component of category $i$ who arrived at time $t$ and $t>t_{j, k+1}^n$ for all $j\neq i$, it needs not to wait and there would be a match immediately since other queues have a component of index $k+1$ waiting to be matched. However, if $t < t_{j, k+1}^n$ for some $j\neq i$, then its waiting time would be their maximum difference $(\max_{j\neq i} \{t_{i, k+1}^n\} - t)$. 
With these facts, we can compute the following conditional moments: 
\begin{equation*}
\begin{aligned}
    E\left[(V_i^n(t))^2 \vert A_i^n(t) = k\right] 
    \leq E\left[\max_{j\neq i}\left\{\left(t_{j, k+1}^n - t\right)^2\right\}\right] 
    \leq \sum_{j\neq i} E\left[\left(t_{j, k+1}^n - t\right)^2\right]. 
\end{aligned}
\end{equation*}
Since we assume renewal arrivals, we let $E[\tau_{jk}^n] = 1/\lambda_j^n$ and $\text{Var}(\tau_{jk}^n) = c_j/(\lambda_j^n)^2$ for some $c_j>0$, and $\{\tau_{jk}^n\}_{k\geq 1}$ are independent with each other, which further yields 
\begin{equation*}
    E[t_{j, k+1}^n] = E\left[\sum_{l=1}^{k+1} \tau_{jl}^n\right] = \frac{k+1}{\lambda_j^n}, \quad
    \text{Var}(t_{j, k+1}^n) = \frac{c_j(k+1)}{(\lambda_j^n)^2}.
\end{equation*}
Therefore, with a simple trick of adding and subtracting $(k+1)/\lambda_j^n$ term, we have
\begin{equation}
    E\left[\left(t_{j, k+1}^n - t\right)^2\right] = E\left[\left(t_{j, k+1}^n - \frac{k+1}{\lambda_j^n} + \frac{k+1}{\lambda_j^n} - t\right)^2\right] 
    \leq 2\left(\frac{c_j(k+1)}{(\lambda_j^n)^2} + \left(\frac{k+1}{\lambda_j^n} - t\right)^2\right). 
    \label{expected value of (t_(j,k+1)^n - t)^2}
\end{equation}
Hence, \eqref{expected value of (t_(j,k+1)^n - t)^2} together with above inequality, we obtain
\begin{equation}
    E\left[(V_i^n(t))^2 \vert A_i^n(t) = k\right] \leq 2\sum_{j\neq i} \left(\frac{c_j(k+1)}{(\lambda_j^n)^2} + \left(\frac{k+1}{\lambda_j^n} - t\right)^2\right). 
\end{equation}
Consequently, we have
\begin{equation*}
\begin{aligned}
    E\left[\abs{\hat{V}_i^n(t)}^2\right] &= \sum_{k=0}^{\infty} E\left[\abs{\hat{V}_i^n(t)}^2 \vert A_i^n(t)=k\right]\cdot P\left[A_i^n(t) = k\right] \\
    &\leq 2n \sum_{k=0}^{\infty} \sum_{j\neq i} \left(\frac{c_j(k+1)}{(\lambda_j^n)^2} + \left(\frac{k+1}{\lambda_j^n} - t\right)^2\right) \cdot P\left[A_i^n(t) = k\right] \\
    &= 2n \sum_{j\neq i}\left(\frac{c_j}{(\lambda_j^n)^2} E\left[A_i^n(t)+1\right] + E\left[\left(\frac{A_i^n(t) + 1}{\lambda_j^n} - t\right)^2\right]\right) \\
    &\leq 2\sum_{j\neq i} \left(c_j\left(\frac{n}{\lambda_j^n}\right)^2  E\left[\Bar{A}_i^n(t) + \frac{1}{n}\right] + \left(\frac{n}{\lambda_j^n}\right)^2  E\left[\left(\hat{A}_i^n(t) + \frac{\lambda_i^n - \lambda_j^n}{\sqrt{n}} t + \frac{1}{\sqrt{n}}\right)^2\right]\right),  
\end{aligned}
\end{equation*}
which further yields \eqref{moment bound for hat V and each t (non-abandonment)}. This completes the proof. 
\end{proof}


\subsection{Proof of Theorem \ref{Theorem convergence of cost functionals (polynomial)}}
\label{sec: proof of Theorem convergence of cost functionals (polynomial)}

\begin{proof}[Proof of Theorem \ref{Theorem convergence of cost functionals (polynomial)}]
Here, we mainly verify the uniform integrability of appropriate integrands by considering the expectations separately under some restrictions for the cost function $\eqref{cost function polynomial growth}$. 

First, we show that
\begin{equation}
    \lim_{n\to\infty} E\left[\sum_{j=1}^K\int_0^{\infty} e^{-\gamma s} C_j(\hat{Q}_j^n(s))ds\right] = E\left[\sum_{j=1}^K \int_0^{\infty} e^{-\gamma s} C_j(X_j(s))ds\right]. 
    \label{convergence of first part}
\end{equation}
Since $\hat{Q}_j^n$ converges weakly to $X_j$ in $D[0, T]$, using the Skorokhod representation theorem, we can simply assume that $\hat{Q}_j^n$ converges to $X_j$ a.s. in some special probability space. By the continuous mapping theorem, we obtain $\lim_{n\to\infty} C_j(\hat{Q}_j^n(t)) = C_j(X_j(t))$ a.s. 

Next, we verify the uniform integrability of the integrand $e^{-\alpha t}C_j(\hat{Q}_j^n(t))$ so that it guarantees the interchange of integral and limit. Since cost function $C_j(\cdot)$ admits polynomial growth as assumed in \eqref{cost function polynomial growth}, we have $C_j(\hat{Q}_j^n(t))\leq c_j(1+\abs{\hat{Q}_j^n(t)}^p)$, where $c_j>0$ and $1\leq p < 2l$ are constants independent of $T$ and $n$ as in \eqref{cost function polynomial growth}. 
Since $1\leq p<2l$ for $l\geq 1$ as assumed, let $\delta>0$ so that $1+\delta = 2l/p$. We will explain the reason for involving $l$ at the end of this section. 
Similar to Proposition \ref{moment bound and stochastical boundedness of hat Q}, we can derive a higher order moment bound for $B_T^n$ random variable introduced in \eqref{assume an upper bound for input}, namely $E\left[B_T^{2l}\right]\leq c(1+T^{d})$.
Following the same proof, we can strengthen the moment-bound condition of the queue lengths by
\begin{equation}
    E\left[\|\vect{\hat{Q}^n}\|_T^{2l}\right] \leq c(1+K)^{2l} (1+T^d) \cdot \exp{\left(2lc_0(1+K) T \right)}, 
    \label{higher order moment bound for hat Q}
\end{equation}
where $d \geq 1$ and $l\geq 1$ are constants independent of $T$ and $n$, and $c>0$ is a genetic constant. 
Notice that if we pick $l=1$, we may obtain a special case proved in \eqref{moment condition on hat Q}. 
This result further renders
\begin{equation*}
    E\left[\abs{\hat{Q}_j^n(s)}^{2l}\right] \leq  E\left[\|\hat{Q}_j^n\|_s^{2l}\right] \leq K^l E\left[\|\vect{\hat{Q}^n}\|_s^{2l}\right] \leq c K^l(1+K)^{2l} (1+s^d) e^{2lc_0(1+K)s}. 
\end{equation*}
Hence, since $\gamma > 2lc_0(1+K)$ as assumed, we obtain
\begin{equation*}
\begin{aligned}
    & E\left[\int_0^{\infty} e^{-\gamma s} \abs{C_j(\hat{Q}_j^n(s))}^{1+\delta}ds\right] \\
    &\leq c\int_0^{\infty} e^{-\gamma s}E\left[\left(1 + \abs{\hat{Q}_j^n(s)}^p\right)^{1+\delta}\right]ds \\
    &\leq c\int_0^{\infty} e^{-\gamma s}\left(1 + E\left[\abs{\hat{Q}_j^n(s)}^{2l}\right]\right)ds \\
    &\leq c\int_0^{\infty} e^{-\gamma s}ds + c K^l (1+K)^{2l}\int_0^{\infty} (1+s^d) e^{-(\gamma - 2lC(1+K))s} ds \\
    &< \infty, 
\end{aligned}
\end{equation*}
where $c>0$ is a generic constant, and $K$, $c_0$, and $d\geq 1$ are constants independent of $n$. 
This verifies the uniform integrability. Therefore, \eqref{convergence of first part} follows. 

Second, we show that
\begin{equation}
    \lim_{n\to\infty} E\left[\sum_{j=1}^K p_j\int_0^{\infty} e^{-\gamma s}d\hat{G}_j^n(s)\right] = E\left[\sum_{j=1}^K p_j \delta_j \int_0^{\infty} e^{-\gamma s} X_j(s) ds\right]. 
    \label{convergence of second part}
\end{equation}
Using Fubini-Tonelli's theorem, we derive that $\gamma\int_{t=0}^{\infty} \int_t^{\infty} e^{-\gamma s}ds d\hat{G}_j^n(t) = \gamma\int_0^{\infty} \int_{t=0}^s e^{-\gamma s}d\hat{G}_j^n(t)ds$, which further implies
\begin{equation}
    \int_0^{\infty} e^{-\gamma t}d\hat{G}_j^n(t) = \gamma\int_0^{\infty}e^{-\gamma t}\hat{G}_j^n(t)dt
    \label{rewritten abandonment cost}
\end{equation}
a.s. 
Notice that this can also be verified using integration by parts and the moment bound of $\hat{G}_k^n$ obtained in Corollary \ref{SB for hat G and weak convergence of hat G}. 
Now, it suffices to show that 
\begin{equation}
     \lim_{n\to\infty} E\left[\sum_{j=1}^K \gamma p_j \int_0^{\infty} e^{-\gamma t}  \hat{G}_j^n(t)dt \right] = E\left[\sum_{j=1}^K  p_j\delta_j\int_0^{\infty} e^{-\gamma t}  X_j(t)dt\right].
     \label{second part converge 2}
\end{equation}

As in the proof of Corollary \ref{SB for hat G and weak convergence of hat G}, since $\|\hat{M}_j^n\|_T$ converges to zero in probability and $\delta_j^n\int_0^{\cdot}\hat{Q}_j^n(s)ds$ converges weakly to $\delta_j\int_0^{\cdot} X_j(s)ds$ in $ D[0, T]$, we conclude that $\hat{G}_j^n(\cdot)$ converges weakly to $\delta_j\int_0^{\cdot} X_j(s)ds$ in $ D[0, T]$. 
Given $\hat{G}_j^n(\cdot) \geq 0$ is non-decreasing, we are left to verify the uniform integrability of $\hat{G}_j^n(T)$ as follows: 
\begin{equation*}
    \begin{aligned}
    E\left[(\hat{G}_j^n(T))^2\right] 
    &\leq 2\left(E\left[\|\hat{M}_j^n\|_T^2\right] + (\delta_j^n)^2 T^2 E\left[\|\hat{Q}_j^n\|_T^2\right]\right) \\
    &\leq 2C_1(1+T^l) + 2C^2 K(1+K)^2 C_2 T^2 (1+T^b) \exp{(2c_0(1+K)T)}, 
    \end{aligned}
\end{equation*}
where $C_1$, $C_2$, $l\geq1$ and $b\geq1$ are constants independent of $T$ and $n$ (see \eqref{moment bound for hat M} and \eqref{moment condition on hat Q}). 
Here the first inequality is obtained by the definition of $\hat{M}_j^n(\cdot)$ introduced in \eqref{hat M martingale}. 
Consequently, $\lim_{n\to\infty}E\left[\hat{G}_j^n(T)\right] = \delta_j E\left[\int_0^TX_j(s)ds\right]$. 
By this limit, the above moment bound condition, and assumption $\gamma>2c_0(1+K)$, we obtain
\begin{equation}
    \lim_{n\to\infty} \gamma\int_0^{\infty} e^{-\gamma t}E\left[\hat{G}_j^n(t)\right]dt = \gamma\int_0^{\infty} e^{-\gamma t} E\left[\int_0^t \delta_j X_j(s)ds\right]dt, 
\end{equation}
by verifying the uniform integrability of integrand, namely
\begin{equation*}
\begin{aligned}
    & E\left[\int_0^{\infty} e^{-\gamma t} \abs{\hat{G}_j^n(t)}^2 dt\right] \\
    &\leq 2\int_0^{\infty} e^{-\gamma t} E\left[\|\hat{M}_j^n\|_t^2\right] dt + 2(\delta_j^n)^2 \int_0^{\infty} e^{-\gamma t} E\left[\|\hat{Q}_j^n\|_t^2\right]t^2 dt \\
    &\leq 2C_1\int_0^{\infty} e^{-\gamma t} (1+t^l) dt + 2C^2 K(1+K)^2 C_2 \int_0^{\infty} t^2 (1+t^b) e^{-(\gamma - 2c_0(1+K)) t} dt \\
    &< \infty, 
\end{aligned}
\end{equation*}
since $\gamma>2c_0(1+K)$ assumed above. 
Using Fubini's theorem, we can rewrite the above conclusion as
\begin{equation}
    \lim_{n\to\infty} \gamma E\left[\int_0^{\infty} e^{-\gamma t}\hat{G}_j^n(t)dt \right]= E\left[\int_0^{\infty} e^{-\gamma t}\delta_j X_j(t)dt\right]. 
\end{equation}
Hence, \eqref{second part converge 2} follows as well as \eqref{convergence of second part}. 
As a consequence, \eqref{convergence of cost functional} immediately follows from \eqref{convergence of first part} and \eqref{convergence of second part}. 
\end{proof}





\end{appendices}


\bibliography{sn-bibliography}


\end{document}